\numberwithin{equation}{section}
\newtheorem{theorem}{Theorem}[section]
\newtheorem{lemma}[theorem]{Lemma}
\theoremstyle{definition}
\newtheorem{definition}[theorem]{Definition}
\theoremstyle{remark}
\newtheorem{remark}[theorem]{Remark}
\newcommand{\Div}{\operatorname{div}}
\newcommand{\Curl}{\operatorname{curl}}
\newcommand{\Grad}{\nabla}
\newcommand{\vr}{\varrho}
\newcommand{\vc}[1]{{\bm{#1}}}
\newcommand{\weak}{\rightharpoonup}
\newcommand{\norm}[1]{\left\Vert#1\right\Vert}
\newcommand{\abs}[1]{\left|#1\right|}
\newcommand{\R}{\mathbb{R}}
\newcommand{\N}{\mathbb{N}}
\newcommand{\Om}{\ensuremath{\Omega}}
\newcommand{\vt}{\theta}
\newcommand{\vvt}{\vartheta}
\newcommand{\dott}{\, \cdot\,}
\begin{document}
\title[Operator splitting for active scalar equations]{Operator splitting 
for well-posed \\ active scalar equations}

\author[Holden]{Helge Holden}
\address[Helge Holden]{\newline
    Department of Mathematical Sciences\newline
    Norwegian University of Science and Technology\newline
    NO--7491 Trondheim, Norway\newline
{\rm and} \newline
  Centre of Mathematics for Applications\newline
University of Oslo\newline
  P.O.\ Box 1053, Blindern,
  NO--0316 Oslo, Norway }
\email[]{\href{holden@math.ntnu.no}{holden@math.ntnu.no}}
\urladdr{\href{http://www.math.ntnu.no/~holden}{www.math.ntnu.no/\~{}holden}}

\author[Karlsen]{Kenneth H.~Karlsen}
\address[Kenneth H. Karlsen]{\newline
   Centre of Mathematics for Applications\newline
 University of Oslo \newline
  P.O.\ Box 1053, Blindern,
  NO--0316 Oslo, Norway}
\email[]{kennethk@math.uio.no}
\urladdr{http://www.kkarlsen.com}

\author[Karper]{Trygve K. Karper}
\address[Trygve K. Karper]{\newline
University of Maryland, CSCAMM\newline
4146 CSIC Building \#406 
Paint Branch Drive \newline
College Park, MD 20742-3289,
USA
}

\date{\today}

\subjclass[2010]{Primary: 35Q35; Secondary: 65M12}


\keywords{operator splitting, convergence, active scalar 
equation, quasi-geostrophic equation, aggregation equation, 
Burgers equation, fractional diffusion, KdV equation, Kawahara equation}

\thanks{Supported in part by the Research Council of Norway.}

\maketitle

\begin{abstract}
We analyze operator splitting methods applied to scalar equations 
with a nonlinear advection operator, and a linear (local or nonlocal) 
diffusion operator or a linear dispersion operator. 
The advection velocity is determined from the scalar 
unknown itself and hence the equations are so-called active scalar equations.  
Examples are provided by the surface quasi-geostrophic 
and aggregation equations. In addition, Burgers-type equations 
with fractional diffusion as well as the KdV and Kawahara 
equations are covered.  Our main result is that the 
Godunov and Strang splitting methods converge with the expected rates 
provided the initial data is sufficiently regular.
\end{abstract}


\section{Introduction and main results}
We  consider operator splitting applied to a class of evolution equations having a 
nonlinear ``transport part", and a linear (local or nonlocal) ``diffusion part" or 
a linear ``dispersion part".   These equations are 
posed on $\R^N$ for $N=1, 2, 3$, and are of the form
\begin{equation}\label{eq:eq}
	u_t + \Div \left(u\,\vc{v}(u)\right) = A(u),
\end{equation}
where $u(t,x)$ is a scalar function and 
$\Div$ is the spatial divergence operator.

The vector-valued operator $\vc{v}(\dott)$ and 
the real-valued operator $A(\dott)$ are linear and satisfy 
a set of hypotheses (given in Definition \ref{def:operators} below).
These hypotheses are met, for example, by the popular fractional quasi-geostrophic 
\cite{Caffarelli1,Constantin1,Constantin2,Constantin4,Cordoba,Dong,Kiselev2} 
and aggregation equations 
\cite{Bertozzi-Carrillo,Bertozzi-Laurent1,DongAG1,DongAG2,Mogilner1,Bertozzi1,Bertozzi2}:
\begin{align}
	u_t +   \Div (u \,\vc{v}(u)) + (-\Delta)^{\alpha/2}u &= 0, 
	\quad \vc{v}(u) = \Curl (-\Delta)^{-\beta/2}u, \label{eq:qg}\\ 
	u_t + \Div \left(u\, \vc{v}(u)\right) + (-\Delta)^{\alpha/2}u &= 0, \label{eq:ag}
	\quad \vc{v}(u) = \nabla \Phi \star u,
\end{align}
where $\beta,\alpha \geq 1$. Both these equations are 
paramount examples of so-called \emph{active scalar equations}, 
that is, equations in which the advection velocity $\vc{v}(u)$ is 
determined by the scalar unknown $u$ itself. 
In three dimensions, the general formulation \eqref{eq:eq} encompasses 
also the active scalar equation \cite{Constantin3}
\begin{equation*}
	u_t + \Div(u\,\vc{v}(u)) - \Delta u = 0, 
	\quad \vc{v}(u) = \Div \mathbb{T} (u),
\end{equation*}
for which well-posedness was established recently in \cite{Friedlander}.
Here, $\mathbb{T}$ is a matrix of Calderon--Zygmund operators such 
that $\Div \vc{v}(u) = \Div \Div \mathbb{T}(u) = 0$. This equation 
is a generalized 3D version of the quasi-geostrophic 
equation \eqref{eq:qg} ($\alpha = 2$).

In one dimension, our linearity requirement on $\vc{v}(\dott)$ limits 
the type of equations we can consider to those of Burgers type, such as
\begin{align*}
	u_t + (u^2)_x &= u_{xxx}, & (\text{{\sc{KdV}}}), \\
	u_t + (u^2)_x &= u_{xx},& (\text{viscous Burgers}), \\
	u_t + (u^2)_x &= -u_{xxx} + u_{xxxxx}, & (\text{Kawahara}).
\end{align*}

Since the class of equations studied herein covers a variety of physical models, 
we will give a proper discussion of applications at the end of the paper.

The main topic of the present paper is analysis of operator splitting methods 
for constructing approximate solutions to \eqref{eq:eq}.  
The tag ``operator splitting" refers to  the classical 
idea of constructing  numerical methods for complicated partial 
differential equations by reducing the original equations to a 
series of equations with simpler structure, each of which 
can be handled by some efficient and tailored numerical method. 
We do not survey the literature on operator splitting here, referring the reader instead 
to the bibliography in \cite{Holden:book}.

The purpose of this paper is to  prove, in the context of 
active scalar equations \eqref{eq:eq}, the well-posedness 
and convergence rates for two frequently used operator splitting methods.
Both methods are based on applying repeatedly the transport 
operator and the diffusion/dispersion operator $A$
in separate steps.  This splitting is very reasonable since one can then 
use ``hyperbolic" numerical methods in the 
transport step and ``Fourier space" 
methods in the diffusion/dispersion step.  
The methods are in the literature referred 
to as Godunov and Strang splitting and are widely used 
 for both numerical computations and  analysis. 
The reader can consult \cite{Holden:book} for a recent survey 
of theory and applications; see also \cite{Majda:2002kx} 
for analysis of splitting algorithms for the incompressible 
Navier-Stokes equations. 
 
Let us now discuss our splitting methods in more detail. 
For this purpose, we first recast \eqref{eq:eq} 
in the form
\begin{equation*}
	u_t = C(u), \quad C(u) = A(u) + B(u),
\end{equation*}
where we have introduced the operator $B(u) :=-\Div (u \vc{v}(u))$.
We can then construct two solution operators 
$\Phi_A$ and $\Phi_B$ associated with the 
abstract ordinary differential equations
\begin{equation*}
	\begin{split}
		\partial_t\Phi_A(t,u_0) &= A(\Phi_A(t,u_0)), 
		\quad \Phi_A(0,u_0)= u_0, \\
		\partial_t\Phi_B(t,u_0) &= B(\Phi_B(t,u_0)), 
		\quad \Phi_B(0,u_0)= u_0.
	\end{split}
\end{equation*}
The first method we will consider, Godunov splitting, 
is defined as follows.
For $\Delta t> 0$ given,  construct a sequence 
$\{u^n, u^{n+1/2}\}_{n=1}^{\lfloor T/\Delta t\rfloor}$ of approximate solutions 
to \eqref{eq:eq} by the following procedure:
Let $u^0 = u_0$ and determine inductively
\begin{equation}\label{defI:godunov}
	u^{n+1/2} = \Phi_B(\Delta t,u^{n}),\quad    
	u^{n+1} = \Phi_A(\Delta t, u^{n+1/2}), 
	\quad n=0, \ldots, \lfloor T/\Delta t\rfloor-1,
\end{equation}
where $\lfloor z \rfloor$ gives the greatest integer less than or equal to $z$.

For Godunov splitting we prove that it is well-posed
and that it convergences linearly in $\Delta t$. Specifically, 
we prove the following theorem.
\begin{theorem}\label{thm:godunov}
Let $T>0$ be given and assume $u_0 \in H^{k}$ with $6 \leq k \in \mathbb{N}$.
Then, for $\Delta t>0$ sufficiently small we have the following:	
\begin{enumerate}
	\item The Godunov method \eqref{defI:godunov} is well-defined with
	\begin{equation*}
		\|u^n\|_{H^k} \leq C, \quad n=1, \ldots, \lfloor T/\Delta t \rfloor.
	\end{equation*}
	\item The error satisfies
	\begin{equation}\label{err:godunov}
		\|u^n - u(n\Delta t)\|_{H^{k-\max\{\alpha,2\}}} 
		\leq C\|u_0\|_{H^{k}} \Delta t,
	\end{equation}
	where $\alpha$ is the highest number 
	of derivatives occuring in $A$.
\end{enumerate}

\end{theorem}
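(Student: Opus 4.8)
The plan is to combine stability estimates for the two solution operators $\Phi_A$ and $\Phi_B$ with a standard telescoping (Lady Windermere's fan) argument for the error. The first order of business is to establish the necessary a priori bounds for the subflows. For $\Phi_A$: since $A$ is linear, $\Phi_A(t,u_0)$ is given by a Fourier multiplier and one has $\|\Phi_A(t,u_0)\|_{H^k}\le \|u_0\|_{H^k}$ for a diffusive $A$, or conservation of $H^k$ norm for a dispersive $A$; in either case $\Phi_A$ is a contraction (or isometry) on every $H^s$. For $\Phi_B$: this is a nonlinear transport equation $u_t=-\Div(u\,\vc v(u))$, and one must prove that for $u_0\in H^k$ the flow exists on a time interval whose length depends only on $\|u_0\|_{H^k}$ and satisfies $\|\Phi_B(t,u_0)\|_{H^k}\le \|u_0\|_{H^k}\,e^{Ct}$ with $C=C(\|u_0\|_{H^k})$. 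This uses that $\vc v(\dott)$ is linear and, by the hypotheses in Definition \ref{def:operators}, maps $H^k$ into a space controlling $\|\Grad\vc v(u)\|_{L^\infty}$ (so that commutator/Moser estimates close the energy inequality). I would also record a local-in-time Lipschitz/stability estimate: $\|\Phi_B(t,u)-\Phi_B(t,w)\|_{H^{s}}\le e^{Ct}\|u-w\|_{H^{s}}$ for $s\le k-1$, with $C$ depending on the $H^k$ norms of $u,w$, and similarly a bound $\|\Phi_B(t,u)-\Phi_B(t,w)\|_{H^{s-1}}$ with $s\le k$ to gain a derivative.

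Next I would prove the global-in-$n$ bound, part (1). Run the induction: assuming $\|u^n\|_{H^k}\le M$, one step of $\Phi_B$ over time $\Dt$ gives $\|u^{n+1/2}\|_{H^k}\le M\,e^{C(M)\Dt}$, and then $\Phi_A$ does not increase the $H^k$ norm, so $\|u^{n+1}\|_{H^k}\le M\,e^{C(M)\Dt}$. Iterating, $\|u^n\|_{H^k}\le \|u_0\|_{H^k}\,e^{C(M)\,n\Dt}\le \|u_0\|_{H^k}\,e^{C(M)T}$, which closes provided we first fix $M:=2\|u_0\|_{H^k}\,e^{C_0 T}$ (with $C_0$ the constant for data of size $2\|u_0\|_{H^k}$) and then take $\Dt$ small enough that the one-step growth never pushes us past $M$ before time $T$; a continuous-induction / bootstrap phrasing makes this rigorous. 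The same argument applied to the exact equation (which is just $\Phi_{A+B}$) — or, if preferred, citing the well-posedness theory that must appear earlier — gives $\|u(t)\|_{H^k}\le C$ on $[0,T]$.

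For the error estimate, part (2), I would use the standard consistency-plus-stability decomposition. Write $e^n=u^n-u(n\Dt)$. The local truncation error of one Godunov step is $\tau^n:=\Phi_A(\Dt,\Phi_B(\Dt,u(n\Dt)))-u((n+1)\Dt)$, and the key consistency lemma is $\|\tau^n\|_{H^{k-\max\{\alpha,2\}}}\le C\Dt^2$, where the loss of $\max\{\alpha,2\}$ derivatives is exactly what lets us bound the relevant double commutator $[[A,B]$ or $\Pt]$-type terms: Taylor-expanding both subflows to second order produces a remainder involving $A$ applied twice (costing $2\alpha$, but $\alpha$ of those derivatives are absorbed by the smoothing of $\Phi_A$ when $A$ is parabolic, leaving $\alpha$), together with $AB$ and $BA$ terms (costing $\alpha$ derivatives on top of the $H^{k}$ bound for $B(u)\in H^{k-1}$, hence the $\max\{\alpha,2\}$). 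The dispersive case ($A$ non-smoothing, $\alpha=2$ or higher) is handled the same way but there the loss is genuinely $\alpha$ derivatives since $\Phi_A$ only conserves norms. Then the triangle inequality plus the Lipschitz bounds above give
\begin{equation*}
	\|e^{n+1}\|_{H^{k-\max\{\alpha,2\}}} \le (1+C\Dt)\|e^n\|_{H^{k-\max\{\alpha,2\}}} + C\Dt^2,
\end{equation*}
and the discrete Grönwall lemma yields $\|e^n\|_{H^{k-\max\{\alpha,2\}}}\le C\,e^{CT}\,\Dt\cdot\sup_m\|\,\cdot\,\|$, i.e. the claimed $O(\Dt)$ bound with constant $C\|u_0\|_{H^k}$.

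The main obstacle is the consistency estimate: carefully Taylor-expanding the composition $\Phi_A(\Dt,\cdot)\circ\Phi_B(\Dt,\cdot)$ against the exact flow $\Phi_{A+B}(\Dt,\cdot)$ and tracking which derivative losses can be recovered from the parabolic smoothing of $\Phi_A$ versus which must be paid out of the $H^k$ regularity budget — this is what pins down the precise exponent $\max\{\alpha,2\}$. The second most delicate point is proving the uniform-in-$n$ $H^k$ bound for $\Phi_B$'s iterates without the growth constant degrading as $\Dt\to0$, which is why the bootstrap must fix the target $M$ before choosing $\Dt$. Everything else — the energy estimate for the transport subproblem, the multiplier bounds for $\Phi_A$, and the discrete Grönwall step — is routine given the hypotheses on $\vc v$ and $A$ in Definition \ref{def:operators}.
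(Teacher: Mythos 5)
Your overall route (one-step consistency plus Lipschitz stability of the subflows plus discrete Gr\"onwall, i.e.\ Lady Windermere's fan) is genuinely different from the paper, which instead extends the splitting solution to a two-dimensional time domain, derives exact evolution equations for the error with a forcing term $F$ vanishing at $\tau=t_n$, and bounds $F$ by the commutator $[A,B]$. Your route could in principle be carried out, but as written it has a genuine gap in part (1), and the gap propagates into part (2) because all your stability and consistency constants depend on the uniform $H^k$ bound.

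The gap is the bootstrap for the uniform $H^k$ bound. The energy estimate for the transport subflow (Lemma \ref{lem:nonlineardiv}) is cubic: $\partial_t\tfrac12\norm{u}_{H^k}^2\le C\norm{u}_{H^{k-2}}\norm{u}_{H^k}^2$, so the one-step growth constant is $C(M)\sim cM$ when all you assume is $\norm{u^n}_{H^k}\le M$. Closing your induction requires $\norm{u_0}_{H^k}e^{C(M)T}\le M$, and with $C(M)=cM$ this self-consistency condition has no solution $M$ unless $T$ (or the data) is small; your choice $M=2\norm{u_0}_{H^k}e^{C_0T}$ with $C_0=C(2\norm{u_0}_{H^k})$ does not satisfy it because $C(M)\gg C_0$. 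Taking $\Dt$ small does not help: iterating the one-step estimates simply reproduces the comparison ODE $y'=Cy^2$, whose blow-up time is independent of $\Dt$, consistent with the fact that the $B$-flow itself is only locally well-posed (Lemma \ref{lem:local}). To reach an arbitrary fixed $T$ you must use assumption (6) of Definition \ref{def:operators} --- the global $H^k$ bound on the exact solution --- and couple the boundedness argument to the error estimate, so that the splitting solution is controlled by $\norm{u(t)}_{H^k}+O(\Dt)$ rather than by iterated growth of its own norm; this is precisely what the paper's Lemma \ref{lem:thepoint} does. Moreover, since the error is only controlled in $H^{k-\max\{\alpha,2\}}$, this coupling only bounds the weaker norm, and you additionally need a norm-upgrading step (the paper's Lemma \ref{lem:higher2}, which exploits that the $H^\ell$ growth of the $B$-flow is governed by the $H^{\ell-2}$ norm) to recover the $H^k$ bound claimed in part (1) and needed in your consistency and Lipschitz lemmas. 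A smaller point: your appeal to parabolic smoothing of $\Phi_A$ to ``absorb'' derivatives in the truncation error is neither available in the dispersive case nor needed; the loss $\max\{\alpha,2\}$ comes from the commutator estimate (5) of Definition \ref{def:operators} alone, with $A$ used only through $\int A(u)\,u\,dx\le 0$.
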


The other method we consider, Strang splitting, is 
defined as follows: For $\Delta t> 0$ given, construct a sequence 
$\{u^n, u^{n+1/4}, u^{n+3/4}\}_{n=1}^{\lfloor T/\Delta t\rfloor}$ 
of approximate solutions to \eqref{eq:eq} by the following procedure:
Let $u^0 = u_0$ and determine inductively,
for $n=0, \ldots, \lfloor T/\Delta t\rfloor-1$,
\begin{equation}\label{defI:strang}
	\begin{split}
		u^{n+1/4} &= \Phi_B(\frac12\Delta t,u^{n}), \quad
		u^{n+3/4} = \Phi_A(\Delta t, u^{n+1/4}),  \\
		u^{n+1}   &= \Phi_B(\frac12\Delta t, u^{n+3/4}).
	\end{split}
\end{equation}

For the Strang splitting algorithm we prove well-posedness and second order 
convergence, provided the initial data are sufficiently regular.
\begin{theorem}\label{thm:strang}
Let $T>0$ be given and assume $u_0 \in H^{k}$ with $6 \leq k \in \mathbb{N}$.
Then, for $\Delta t>0$ sufficiently small we have the following:	
\begin{enumerate}
	\item The Strang method \eqref{defI:strang} is well-defined with
	\begin{equation*}
		\|u^n\|_{H^k} \leq C, \quad n=1, \ldots, \lfloor T/\Delta t \rfloor.
	\end{equation*}
	\item The error satisfies \eqref{err:godunov} and
	\begin{equation}\label{err:strang}
		\|u^n - u(n\Delta t)\|_{H^{k-3\max\{\alpha,1\}}} \leq C (\Delta t)^2.
	\end{equation}
\end{enumerate}
The $\alpha$ occurring in \eqref{err:strang} is the highest number of derivatives in $A$.
\end{theorem}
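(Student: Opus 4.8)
I would follow the standard route for splitting-error estimates, which rests on three ingredients --- stability of the two sub-flows, a one-step consistency estimate, and a telescoping (``Lady Windermere's fan'') argument --- all carried out on a scale of Sobolev spaces, so that the derivative losses caused by the nonlinearity of $B=-\Div(u\,\vc{v}(u))$ and by the order $\alpha$ of $A$ can be accounted for. Since the architecture is identical to that of Theorem~\ref{thm:godunov}, I would reuse the a priori estimates from its proof and concentrate the new effort on the sharper, third-order one-step analysis responsible for the quadratic rate. The building blocks are: (i) since $A$ is linear and, by Definition~\ref{def:operators}, generates a strongly continuous group or semigroup that is bounded on every $H^{s}$ uniformly for $t\in[0,T]$, one has $\norm{\Phi_A(t,w)}_{H^{s}}\le C\norm{w}_{H^{s}}$; (ii) for $w\in H^{k}$ the transport flow $\Pt\Phi_B=B(\Phi_B)$ exists on a time interval whose length depends only on $\norm{w}_{H^{k}}$, with $\norm{\Phi_B(t,w)}_{H^{k}}\le\norm{w}_{H^{k}}\,e^{Ct\norm{w}_{H^{k}}}$ and the lower-order Lipschitz bound $\norm{\Phi_B(t,w_1)-\Phi_B(t,w_2)}_{H^{s}}\le e^{Ct}\norm{w_1-w_2}_{H^{s}}$ for $s\le k-1$, the constant controlled by $\max_i\norm{w_i}_{H^{k}}$; the same estimates hold for the exact flow $\Phi_C$ of $C=A+B$.

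Part~(1) then follows by a continuation argument on the iterates. Fix $R:=2\sup_{t\in[0,T]}\norm{u(t)}_{H^{k}}$, which is finite by the well-posedness of \eqref{eq:eq}, and assume inductively that $\norm{u^{m}}_{H^{k}}\le R$ for all $m\le n$. A Strang step is the composition $\Phi_B(\tfrac12\Dt)\circ\Phi_A(\Dt)\circ\Phi_B(\tfrac12\Dt)$, so for $\Dt$ small enough that $\tfrac12\Dt$ lies inside the $\Phi_B$-existence time the building blocks give $\norm{u^{n+1}}_{H^{k}}\le(1+C(R)\Dt)\norm{u^{n}}_{H^{k}}$. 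Iterating over $n\le T/\Dt$ yields $\norm{u^{n}}_{H^{k}}\le e^{C(R)T}\norm{u_0}_{H^{k}}$, which is $\le R$ for $\Dt$ sufficiently small, closing the induction.

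The heart of the matter is the one-step consistency estimate. Writing $S_{\Dt}:=\Phi_B(\tfrac12\Dt)\circ\Phi_A(\Dt)\circ\Phi_B(\tfrac12\Dt)$, I would Taylor-expand in $\Dt$, for $w\in H^{k}$ with $\norm{w}_{H^{k}}\le R$, both $S_{\Dt}(w)$ and $\Phi_C(\Dt,w)$ to third order. Both equal $w+\Dt\,C(w)+\tfrac{\Dt^{2}}{2}\,C'(w)C(w)+O(\Dt^{3})$, where $C'(w)$ denotes the Fréchet derivative of $C$ at $w$; the decisive point is that the \emph{symmetry} of the Strang composition forces the $\Dt^{2}$-coefficient of $S_{\Dt}$ to collapse to $\tfrac12\,C'(w)C(w)=\tfrac12\,(A+B'(w))\big[(A+B)(w)\big]$, whereas the Godunov composition leaves an extra $\Dt^{2}$-term of commutator type --- this is exactly what separates \eqref{err:strang} from \eqref{err:godunov}. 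The surviving $\Dt^{3}$-terms are threefold compositions of $A$ and of the linearizations $B'(\cdot)$ evaluated at $w$ (equivalently, iterated commutators of $A$ and $B$), and estimating them amounts to bounding $\Pt^{3}$ of $\Phi_A$, $\Phi_B$ and of the mixed compositions; since a differential of $B$ costs at most one derivative and a differential of $A$ at most $\alpha$, the same bookkeeping as in Theorem~\ref{thm:godunov} shows that these terms lose at most $3\max\{\alpha,1\}$ derivatives, giving $\norm{S_{\Dt}(w)-\Phi_C(\Dt,w)}_{H^{k-3\max\{\alpha,1\}}}\le C(R)\,\Dt^{3}$.

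Part~(2) is then the standard telescoping. With $e^{n}:=u^{n}-u(n\Dt)$ and $m:=k-3\max\{\alpha,1\}$ one writes
\[
	e^{n+1}=\big(S_{\Dt}(u^{n})-\Phi_C(\Dt,u^{n})\big)+\big(\Phi_C(\Dt,u^{n})-\Phi_C(\Dt,u(n\Dt))\big):
\]
the first term is $\le C(R)\Dt^{3}$ in $H^{m}$ by the consistency estimate together with $\norm{u^{n}}_{H^{k}}\le R$, and the second is $\le(1+C(R)\Dt)\norm{e^{n}}_{H^{m}}$ by the lower-order Lipschitz bound for $\Phi_C$. A discrete Gronwall inequality then gives $\norm{e^{n}}_{H^{m}}\le C\,\Dt^{3}\,n\le C\,T\,\Dt^{2}$, which is \eqref{err:strang}; the bound \eqref{err:godunov} follows a fortiori (or directly from the $O(\Dt^{2})$ Godunov one-step error). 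I expect the genuinely delicate step to be the one-step consistency estimate: organizing the third-order expansion of the \emph{nonlinear} composition $S_{\Dt}$ so that the symmetric $\Dt^{2}$-cancellation is transparent, and matching each surviving $\Dt^{3}$-term with a Sobolev space in which it is bounded by $C(R)\Dt^{3}$ with $R$ fixed. The continuation argument for~(1) and the telescoping for~(2) are then routine once the sub-flow energy estimates already established for Theorem~\ref{thm:godunov} are in hand.
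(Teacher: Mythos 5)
Your plan for part (2) (one-step consistency of order $\Dt^3$ in $H^{k-3\max\{\alpha,1\}}$ plus telescoping with Lipschitz stability of the exact flow) is a legitimate alternative to the paper's route, which instead works with a continuous multi-time extension $\vvt(t,\tau,\omega)$, an exact evolution equation for the error with forcing $F$, and the cancellation $\nabla_t F(\tfrac{t_n}{2},t_n,\tfrac{t_n}{2})\cdot(1,2,1)^T=0$. But your part (1) has a genuine gap, and since every constant in your consistency and stability estimates depends on the uniform $H^k$ bound $R$ on the iterates, the gap propagates to part (2). Concretely: the per-step bound $\norm{u^{n+1}}_{H^k}\le(1+C(R)\Dt)\norm{u^n}_{H^k}$, iterated over $n\le T/\Dt$ steps, gives $\norm{u^n}_{H^k}\le e^{C(R)T}\norm{u_0}_{H^k}$, and this quantity does \emph{not} depend on $\Dt$; taking ``$\Dt$ sufficiently small'' cannot force it below $R$. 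With your choice $R=2\sup_t\norm{u(t)}_{H^k}$ the requirement $e^{C(R)T}\norm{u_0}_{H^k}\le R$ fails as soon as $C(R)T>\log 2$ (the exact solution need not grow at all), and trying to choose $R$ self-consistently as a solution of $R=e^{C(R)T}\norm{u_0}_{H^k}$ fails for large $T$ or large data because $C(R)$ grows (at least) linearly in $R$ — the energy estimate for $\Phi_B$ is genuinely quadratic, $\tfrac{d}{dt}\norm{v}_{H^k}\lesssim\norm{v}_{H^k}^2$. So your continuation argument only yields well-posedness of the splitting on a time interval of length $O(1/\norm{u_0}_{H^k})$, not on a prescribed $[0,T]$.

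The missing idea is the two-norm bootstrap that the paper builds its proof around: one cannot bound the iterates by the sub-flow estimates alone, but must exploit their proximity to the exact solution, which \emph{is} globally bounded on $[0,T]$ by assumption (6) of Definition \ref{def:operators}. The structure is: assume a bound $\gamma$ on the splitting solution in the lower norm $H^{k-\max\{\alpha,2\}}$; upgrade it to an $H^k$ bound $C(\gamma)$ by the regularity-propagation Lemmas \ref{lem:higher2}/\ref{lem:higher-strang} (the quadratic term in the energy estimate is then controlled by $\gamma$, not by the $H^k$ norm itself, giving only exponential growth $e^{C\gamma T}$); with the $H^k$ bound in hand, prove the first-order error estimate in the lower norm (Lemmas \ref{lem:godunoverror}/\ref{lem:strangerror}); conclude that in the lower norm the splitting solution is within $C(\gamma)\Dt$ of $u(t)$, hence bounded by $C_2+C(\gamma)\Dt\le\gamma/2$ once $\gamma=4C_2$ and $\Dt$ is small (Lemmas \ref{lem:thepoint}/\ref{lem:thepoint2}), which closes the continuation. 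Your proposal contains neither the low-norm/high-norm splitting of roles nor the use of the error estimate inside the well-posedness induction, and without them the argument does not close for fixed $T$. The rest of your outline (symmetric cancellation of the $\Dt^2$ term, bookkeeping of $3\max\{\alpha,1\}$ derivative losses, discrete Gronwall) is plausible and mirrors, in discrete form, the paper's Lemmas \ref{lem:tempv}--\ref{lem:spoton}, but it becomes rigorous only after the uniform $H^k$ bound is secured by an argument of the above type.
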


The approach leading to Theorems \ref{thm:godunov} and \ref{thm:strang} 
utilizes the analysis framework put forth in the recent paper \cite{Holden:tao} 
for the KdV equation.  This framework works with 
a definition of the Godunov method given in terms of a specific extension of the 
splitting solution $\{u^n, u^{n+1/2}\}_n$ to all of $[0,T]$.
An adaption of this extension that also applies to Strang splitting 
was provided in \cite{Holden:karper}. This extension, which is 
different from the one used in \cite{Holden:tao} for Strang 
splitting, will be  employed herein.  This allows us to treat in a unified manner 
a rather general class of equations,  covering those treated 
in \cite{Holden:tao}, \cite{Holden:karper}, and \cite{Holden:lubich}, 
as well many additional equations not treated in these papers.
At variance with \cite{Holden:karper}, we do not require the divergence of the 
velocity field $\vc{v}(\dott)$ to be zero, thereby enlarging significantly the class 
of equations that can be handled.

The remaining part of this paper is organized as follows: 
Section \ref{sec:prelim} collects some preliminary 
results needed later on. The Godunov and Strang splitting methods are analyzed 
in Sections \ref{sec:godunov-split} and \ref{sec:strang-split}, respectively.

\section{Preliminary existence and regularity results}\label{sec:prelim}

\subsection{Notation and two technical estimates}
We will use $L^p$ to denote the Lebesgue space of integrable function on $\R^N$
with exponent $p$.
If $l$
denotes a N-dimensional multi-index, i.e., 
$l=(l_1,\ldots, l_N)$, $l_j\in\mathbb{N}_0$, we write
\begin{equation*}
  D^l f=\Grad^{l} f =\frac{\partial^{\abs{l}}f}{\partial
    x_1^{l_1}\cdots\partial {x_N}^{l_N}}, \qquad \abs{l}=l_1+\cdots+l_N,
\end{equation*}
to denote any derivative of the $l$th order.
If $\ell\in \mathbb{N}$, we let
\begin{equation*}
	\Grad^{\ell} f = \{\Grad^{l} f\mid |l| = \ell\}
\end{equation*}
and 
\begin{equation*}
	\Grad^{\ell} f:\Grad^{\ell} g = 
	\sum_{\abs{l}\le \ell}  \Grad^{l} f\, \Grad^{l} g.
\end{equation*}
We will be working the Sobolev spaces
\begin{equation*}
  H^k=\{f\in\mathcal S' \mid (1+\abs{\xi}^2)^{k/2}\mathcal F(
  f(\xi))\in L^2\},
\end{equation*}
(where $\mathcal S'$ denotes the set of tempered distributions) 
and $\mathcal F$  denotes Fourier transform. If $k$ is a natural number, 
$H^k$ is the standard Sobolev space with inner product and norm given by
\begin{equation*}
  \langle f,g\rangle_{H^k} 
=\sum_{\ell = 0}^k\langle \Grad^\ell f,\Grad^\ell g\rangle_{L^2},
\quad \norm{f}_{H^k}=  \langle f,f\rangle_{H^k}^{1/2},
\end{equation*}
where we have introduced
\begin{equation*}
	\langle \Grad^\ell f,\Grad^\ell g\rangle_{L^2}
	= \sum_{\substack{l\\ \abs{l} = \ell}} \langle D^l f,D^l g\rangle_{L^2}.
\end{equation*} 

Throughout the paper we will strongly rely on the following two technical lemmas.
The validity of these estimates are the primary reasons for the 
requirements on $\vc{v}$ given in Definition \ref{def:operators}.
Their proofs are straightforward, but somewhat tedious. 
For this reason, proofs are deferred to the appendix.
\begin{lemma}\label{lem:nonlineardiv}
Let $k\geq 6$. Then
\begin{equation*}
	\sum_{s=0}^k\left|\int_{\R^N} 
	\Grad^s(\Div (f \vc{v}(f))):\Grad^s f~  dx\right| \leq C\|f\|_{H^{k-2}}\|f\|_{H^k}^2,
	\quad  f \in H^k.
\end{equation*}	
\end{lemma}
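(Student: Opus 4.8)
The plan is to expand the nonlinearity, differentiate with the Leibniz rule, and isolate the single ``top-order'' term that needs an integration by parts; everything else is routine Moser/Kato--Ponce product estimates combined with the boundedness properties of $\vc{v}$ supplied by Definition \ref{def:operators}. First I would write $\Div(f\,\vc{v}(f)) = \vc{v}(f)\cdot\Grad f + f\,\Div\vc{v}(f)$, fix $0\le s\le k$, apply $\Grad^s$ to each summand, contract with $\Grad^s f$, and integrate over $\R^N$. Throughout I use that, by Definition \ref{def:operators}, $\vc{v}$ and $\Div\vc{v}$ commute with differentiation and are bounded operators of order $\le 0$ on the relevant Sobolev scales, and that $N\le 3$ together with $k\ge 6$ yields $k-2>\tfrac{N}{2}+1$, hence the embedding $H^{k-2}\hookrightarrow W^{1,\infty}$; in particular $\|\vc{v}(f)\|_{W^{1,\infty}}+\|\Div\vc{v}(f)\|_{L^\infty}\lesssim \|f\|_{H^{k-2}}$.

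For the transport term $\int \Grad^s(\vc{v}(f)\cdot\Grad f):\Grad^s f\,\dx$ I would single out the contribution in which all $s+1$ derivatives fall on $f$, namely $\int \big((\vc{v}(f)\cdot\Grad)\Grad^s f\big):\Grad^s f\,\dx$. Writing its integrand, for each multi-index $l$ with $\abs{l}=s$, as $\vc{v}(f)\cdot\Grad\big(\tfrac12(D^l f)^2\big)$, summing over $l$, and integrating by parts turns it into $-\tfrac12\int \Div\vc{v}(f)\,\abs{\Grad^s f}^2\,\dx$, which is bounded by $\|\Div\vc{v}(f)\|_{L^\infty}\|f\|_{H^k}^2\lesssim \|f\|_{H^{k-2}}\|f\|_{H^k}^2$. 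The leftover $\Grad^s(\vc{v}(f)\cdot\Grad f)-(\vc{v}(f)\cdot\Grad)\Grad^s f$ is a Kato--Ponce/Moser commutator and satisfies $\|\cdot\|_{L^2}\lesssim \|\Grad\vc{v}(f)\|_{L^\infty}\|f\|_{H^s}+\|\vc{v}(f)\|_{H^s}\|\Grad f\|_{L^\infty}\lesssim \|f\|_{H^{k-2}}\|f\|_{H^k}$ (using the order-$\le 0$ bound $\|\vc{v}(f)\|_{H^s}\lesssim\|f\|_{H^k}$ and the embedding for the $L^\infty$ factors); pairing with $\Grad^s f$ keeps the bound $\lesssim \|f\|_{H^{k-2}}\|f\|_{H^k}^2$. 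For the compressibility term $\int \Grad^s(f\,\Div\vc{v}(f)):\Grad^s f\,\dx$ no derivative count exceeds $s\le k$, so the plain Moser product estimate applies directly: $\|\Grad^s(f\,\Div\vc{v}(f))\|_{L^2}\lesssim \|f\|_{L^\infty}\|\Div\vc{v}(f)\|_{H^s}+\|f\|_{H^s}\|\Div\vc{v}(f)\|_{L^\infty}\lesssim \|f\|_{H^{k-2}}\|f\|_{H^k}$, and contracting with $\Grad^s f$ again gives $\lesssim \|f\|_{H^{k-2}}\|f\|_{H^k}^2$. Summing the resulting bounds over $s=0,\dots,k$ completes the argument.

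I expect the only genuine obstacle to be precisely the term with all $s+1$ derivatives on $f$: it carries one derivative more than $\|f\|_{H^k}$ can absorb, and since $\vc{v}$ is a Riesz-type (order-$0$) operator it is not bounded on $L^\infty$, so no straightforward H\"older estimate works. The divergence-structure integration by parts above is exactly what rescues it — it simultaneously restores the derivative count to $k$ and replaces the offending factor by $\Div\vc{v}(f)$, whose order-$\le 0$ boundedness is one of the standing hypotheses of Definition \ref{def:operators} (and the reason one may dispense with the assumption $\Div\vc{v}(f)=0$ used elsewhere). The remaining effort is the routine but tedious multi-index bookkeeping of the Leibniz expansions and the interpolation of the intermediate product terms, which is best relegated to an appendix.
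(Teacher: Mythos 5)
Your proposal is correct and follows essentially the same route as the paper's appendix proof: the same splitting $\Div(f\vc{v}(f))=\vc{v}(f)\cdot\Grad f+f\,\Div\vc{v}(f)$, the same integration by parts on the single top-order transport term to trade it for $\Div\vc{v}(f)$ (the one place the $L^\infty$-failure of the order-zero operator $\vc{v}$ would otherwise bite), and control of everything else by product/commutator estimates with $H^{k-2}\hookrightarrow W^{1,\infty}$. The only difference is presentational: you invoke Kato--Ponce/Moser estimates where the paper writes out the Leibniz expansion and treats the cases by H\"older and Sobolev embedding by hand.
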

\begin{lemma}\label{lem:lineardiv}
Let $k\geq 4$. Then the following estimates hold
\begin{align}\label{eq:ldiv1}
	\sum_{s=0}^k\left|\int_{\R^N} \Grad^s
	\Div \left(f \vc{v}(g)\right):\Grad^s f~dx\right| & 
	\leq C\|g\|_{H^k}\|f\|_{H^k}^2, \quad f,g\in H^k, \\
	\label{eq:ldiv2}
	\sum_{s=0}^k\left|\int_{\R^N}\Grad^s
	\Div \left(g\vc{v}(f)\right):\Grad^s f~dx\right| &\leq C\|g\|_{H^{k+1}}\|f\|_{H^k}^2, 
	\quad f\in H^k,\, g\in H^{k+1}.
\end{align}
\end{lemma}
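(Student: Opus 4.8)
The plan is to combine the structural identity
\begin{equation*}
	\Div\bigl(f\,\vc{v}(g)\bigr) = \vc{v}(g)\cdot\Grad f + f\,\Div\vc{v}(g)
\end{equation*}
(and the analogous identity obtained by interchanging $f$ and $g$) with the Leibniz rule, a single integration by parts, and the mapping properties of $\vc{v}$ and $\Div\vc{v}$ recorded in Definition \ref{def:operators}. After applying $\Grad^s$ and expanding by Leibniz, $\Grad^s\Div(f\,\vc{v}(g))$ becomes a finite linear combination of terms of the schematic form $\Grad^a f\,\Grad^b\vc{v}(g)$ with $a+b=s+1$, $a\ge1$ (from the first summand above), together with terms $\Grad^a f\,\Grad^b\!\left(\Div\vc{v}(g)\right)$ with $a+b=s$ (from the second). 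The key observation is that exactly one of these, namely $\vc{v}(g)\cdot\Grad^{s+1}f$, carries the maximal $s+1$ derivatives on $f$; in every other term $f$ is differentiated at most $s$ times and $\vc{v}(g)$ (resp.\ $\Div\vc{v}(g)$) at most $s$ times.

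For the single critical term I would integrate by parts: for each multi-index $l$ with $|l|=s$,
\begin{equation*}
	\int_{\R^N}\bigl(\vc{v}(g)\cdot\Grad D^l f\bigr)\,D^l f\,\dx
	= \tfrac12\int_{\R^N}\vc{v}(g)\cdot\Grad\bigl((D^l f)^2\bigr)\,\dx
	= -\tfrac12\int_{\R^N}\bigl(\Div\vc{v}(g)\bigr)\,(D^l f)^2\,\dx ,
\end{equation*}
so that, summing over $|l|=s$, this contribution is bounded by $\tfrac12\|\Div\vc{v}(g)\|_{L^\infty}\|\Grad^s f\|_{L^2}^2$. Since $\Div\vc{v}$ is bounded on the relevant Sobolev scales by Definition \ref{def:operators}, and since $H^2(\R^N)\hookrightarrow L^\infty(\R^N)$ for $N\le 3$, this is at most $C\|g\|_{H^k}\|f\|_{H^k}^2$. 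I expect this cancellation — and, more to the point, the bookkeeping that pins it down as the \emph{only} term requiring it — to be the crux of the argument: it is precisely what keeps the spurious $(s{+}1)$-st derivative on $f$ out of the final estimate, and it is the reason behind the structural hypotheses on $\vc{v}$.

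The remaining terms I would treat by Cauchy--Schwarz against $\Grad^s f\in L^2$ followed by a bilinear Moser/Gagliardo--Nirenberg estimate. For a product $\Grad^a f\,\Grad^b\vc{v}(g)$ with $a+b=s+1\le k+1$, $1\le a\le s$, $1\le b\le s$: either $a\le k-2$, in which case $\Grad^a f\in L^\infty$ with $\|\Grad^a f\|_{L^\infty}\le C\|f\|_{H^{a+2}}\le C\|f\|_{H^k}$ while $\|\Grad^b\vc{v}(g)\|_{L^2}\le C\|g\|_{H^k}$; or $a\ge k-1$, which forces $b\le2$, so $\|\Grad^b\vc{v}(g)\|_{L^\infty}\le C\|g\|_{H^{b+2}}\le C\|g\|_{H^k}$ while $\|\Grad^a f\|_{L^2}\le C\|f\|_{H^k}$. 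The terms $\Grad^a f\,\Grad^b(\Div\vc{v}(g))$ with $a+b=s\le k$ are handled identically (now at least one of $a,b$ is $\le k-2$). Here $k\ge4$ and $N\le3$ are exactly what guarantee the embeddings are available and the derivative budget is never exceeded. Summing the $O(1)$ terms and then over $s=0,\dots,k$ yields \eqref{eq:ldiv1}.

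For \eqref{eq:ldiv2} the scheme is the same after interchanging $f$ and $g$: $\Grad^s\Div(g\,\vc{v}(f))$ expands into terms $\Grad^a g\,\Grad^b\vc{v}(f)$ ($a+b=s+1$, $a\ge1$) and $\Grad^a g\,\Grad^b(\Div\vc{v}(f))$ ($a+b=s$). The only difference is the term in which all derivatives fall on $g$, namely $\bigl(\Grad^{s+1}g\bigr)\cdot\vc{v}(f)$: it carries $s+1\le k+1$ derivatives of $g$ and is now controlled \emph{directly}, $\|\Grad^{s+1}g\|_{L^2}\|\vc{v}(f)\|_{L^\infty}\le C\|g\|_{H^{k+1}}\|f\|_{H^k}$, with no integration by parts needed — this is precisely where the extra regularity $g\in H^{k+1}$ is spent. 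All other terms close as before, since $\vc{v}(f)$ and $\Div\vc{v}(f)$ are never differentiated more than $s\le k$ times. The detailed, somewhat tedious case-checking (and the precise constants) I would relegate to the appendix, as announced.
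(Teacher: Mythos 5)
Your proposal is correct and follows essentially the same route as the paper's appendix proof: split $\Div(f\vc{v}(g))$ into the transport and divergence parts, expand by Leibniz, integrate the single top-order term $\vc{v}(g)\cdot\Grad\,D^l f$ by parts against $D^l f$ to trade it for $\Div\vc{v}(g)$ in $L^\infty$, and close all remaining terms by H\"older with $L^\infty$/$L^2$ splittings, Sobolev embedding, and the $L^p$-bounds on $\vc{v}$ and $\Div\vc{v}$ from Definition \ref{def:operators}. Your treatment of \eqref{eq:ldiv2} — bounding $\bigl(\Grad^{s+1}g\bigr)\cdot\vc{v}(f)$ directly by $\|g\|_{H^{k+1}}\|\vc{v}(f)\|_{L^\infty}\|f\|_{H^k}$ instead of integrating by parts — is exactly the modification the paper makes in its step (i).
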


\subsection{Existence and regularity results}
Our splitting methods are  based 
on alternately solving 
the equations
\begin{equation}\label{eq:lineareq}
	u_t = A(u), \quad u|_{t=0} = u_0,
\end{equation}
\begin{equation}\label{eq:transporteq}
	u_t + \Div (u \,\vc{v}(u)) = 0, \qquad u|_{t =0} = u_0.
\end{equation}
To analyze the methods we will need to have some control
on the behavior of solutions to each of
the two equations separately, at least on short time 
intervals.
Before we start discussing available results, let us 
state the specific properties we will require of the operators $\vc{v}$ and 
$A$.

\begin{definition}\label{def:operators}
We say that the operators $\vc{v}, A$ are admissible provided:
\begin{enumerate}
	\item $A$ and $\vc{v}$ are linear operators 
	satisfying the commutative property
	$$
	A(v_i(\dott)) = v_i(A(\dott)), \quad  i=1,\ldots, N,
	$$
	where $v_i$ is the $i$th component of $\vc{v}$.
	\item $\vc{v}:L^p\to L^p$ is bounded 
	for any $p<\infty$, and, if $N \geq 2$, 
	$$
	\|\!\Div \vc{v}(u)\|_{L^p} \leq 
	C \|u\|_{L^p}, \quad N \geq 2.
	$$
	\item $A$ is a differential operator; specifically, there is a positive 
	integer $\alpha$ such that $A\colon W^{\alpha + k, p} \rightarrow W^{k,p}$, 
	is bounded for all $k<\infty$ and $p < \infty$. 
	\item $A$ is either conservative or diffusive:
	$$
	\int_{\R^N} A(u)u~dx \leq 0, \quad  u \in  W^{\alpha + k, p}.  
	$$
	\item $A$ satisfies 
	the following commutator estimate
	\begin{equation*}
		\|A(fg)-fA(g)-gA(f)\|_{H^k} \leq 
		C\|f\|_{H^{k+ \max\{\alpha,2\}-1}}\|g\|_{H^{k+ \max\{\alpha,2\}-1}},
	\end{equation*}
	for all $f$, $g\in H^{k+ \max\{\alpha,2\}-1}$ for $k \geq 3$.
	\item The equation \eqref{eq:eq} is well-posed in the sense that for
	any given finite $T>0$ and initial data $u_0 \in H^k$, there is a solution 
	of \eqref{eq:eq} satisfying
	$$
	u \in C([0,T];H^k).
	$$
\end{enumerate}
\end{definition}

The first equation \eqref{eq:lineareq} is a linear equation with constant 
coefficients ((1) in Definition \ref{def:operators})
that preserves (or diffuses) all Sobolev norms 
over time. From this, the following lemma follows readily.

\begin{lemma}\label{lem:global}
If $u_0 \in H^{k}$ for some $\alpha \leq k<\infty$, then there is 
a unique solution $u \in C([0,T];H^k)$ of \eqref{eq:lineareq}.
\end{lemma}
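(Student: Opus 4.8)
The plan is to derive \eqref{eq:lineareq} as a linear evolution equation solved explicitly in Fourier variables, and to use the diffusive/conservative dichotomy from point (4) of Definition \ref{def:operators} to bound all Sobolev norms for all time. First I would note that, since $A$ is a differential operator (point (3)) with constant coefficients (point (1)), it acts in Fourier space as multiplication by a polynomial symbol $a(\xi)$, i.e.\ $\mathcal{F}(A(u))(\xi) = a(\xi)\,\mathcal{F}(u)(\xi)$. The candidate solution is then $u(t) = \mathcal{F}^{-1}\big( e^{t\, a(\xi)} \mathcal{F}(u_0)(\xi)\big)$, and I would check directly that this solves \eqref{eq:lineareq} with $u(0) = u_0$.

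Next I would establish the $H^k$ bound. Testing \eqref{eq:lineareq} against $u$ and using point (4) gives $\frac{d}{dt}\tfrac12\|u(t)\|_{L^2}^2 = \int_{\R^N} A(u)u\,dx \leq 0$; applying the same argument to $\Grad^\ell u$, which again solves \eqref{eq:lineareq} by point (1), yields $\frac{d}{dt}\tfrac12\|\Grad^\ell u(t)\|_{L^2}^2 \leq 0$ for each $\ell \leq k$. Summing over $\ell = 0,\ldots,k$ shows $\|u(t)\|_{H^k} \leq \|u_0\|_{H^k}$ for all $t$, which gives the a priori bound underpinning global existence. (In the conservative case one gets equality, in the diffusive case a genuine decay, but either way the norm does not grow.)

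For existence and continuity I would argue that the explicit Fourier representation, together with the uniform-in-$t$ Sobolev bound, shows $u \in C([0,T];H^k)$: the map $t\mapsto e^{t\, a(\xi)}\mathcal{F}(u_0)(\xi)$ is continuous into $L^2((1+|\xi|^2)^k\,d\xi)$ by dominated convergence, using that $|e^{t\,a(\xi)}| \leq 1$ on the support of the symbol in the diffusive case and $|e^{t\,a(\xi)}| = 1$ in the conservative case. Uniqueness follows from the linearity of \eqref{eq:lineareq}: the difference $w$ of two solutions satisfies $\frac{d}{dt}\tfrac12\|w(t)\|_{L^2}^2 \leq 0$ with $w(0) = 0$, hence $w \equiv 0$.

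I expect the only mild subtlety to be the precise meaning of ``$A$ is either conservative or diffusive'' when the symbol $a(\xi)$ is, say, purely imaginary (dispersive case, e.g.\ KdV), where the semigroup $e^{t\,a(\xi)}$ is a group of unitaries and one really needs $\operatorname{Re} a(\xi) \leq 0$ rather than strict negativity to close the energy estimate; but point (4) is stated exactly so that $\int A(u)u\,dx \leq 0$ holds in all the intended cases, so no real obstacle arises. Everything else is routine, which is presumably why the lemma is stated as following ``readily'' from the structure of the linear equation.
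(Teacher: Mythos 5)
Your proposal is correct and is essentially the paper's own (unwritten) argument: the paper simply asserts that the lemma ``follows readily'' because \eqref{eq:lineareq} is a linear constant-coefficient equation that preserves or diffuses all Sobolev norms, which is precisely the Fourier-multiplier representation plus the $H^k$ energy bound from (1) and (4) of Definition \ref{def:operators} that you spell out. The only cosmetic point is that the symbol $a(\xi)$ need not be a polynomial (e.g.\ the fractional Laplacian has symbol $-\abs{\xi}^{\alpha}$), but your argument only uses $\operatorname{Re} a(\xi)\leq 0$, so nothing is affected.
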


The nonlinear equation \eqref{eq:transporteq} is of hyperbolic type.
Typically, the best available existence results for this 
type of equations are local in time existence of smooth solutions. 
\begin{lemma}\label{lem:local}
Let $u_0 \in H^k$ for some $k \geq 4$. There exists a time $T>0$ and 
a function $u \in C([0,T);H^k)\cap C^1([0,T);H^{k-1})$ such that 
$u$ is a unique solution of \eqref{eq:transporteq} on $[0,T)$. Moreover, 
if $T$ is the maximal time of existence for \eqref{eq:transporteq}, then
\begin{equation*}
	\lim_{t \rightarrow T}\|u(t)\|_{H^k} = \infty.
\end{equation*}
\end{lemma}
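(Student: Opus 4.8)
The statement to prove is Lemma \ref{lem:local}, the local existence and regularity result for the nonlinear transport equation \eqref{eq:transporteq}.

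\textbf{Proof plan.} The plan is to establish local existence via the standard energy method for symmetric-hyperbolic-type systems, using the a priori estimate furnished by Lemma \ref{lem:nonlineardiv}. First I would set up an iteration or vanishing-viscosity/mollification scheme: define approximations $u_\varepsilon$ solving a regularized version of \eqref{eq:transporteq} (e.g. with an added $\varepsilon\Delta u_\varepsilon$ term, or via Friedrichs mollifiers $J_\varepsilon$ applied to the nonlinearity), for which global-in-time smooth solutions exist by parabolic theory or by the Picard–Lindelöf theorem on the Banach space $H^k$ since the mollified operator is locally Lipschitz. The key step is the uniform energy estimate: differentiating $\|u_\varepsilon\|_{H^k}^2$ in time, one gets $\frac{d}{dt}\|u_\varepsilon\|_{H^k}^2 = -2\sum_{s=0}^k \int_{\R^N}\Grad^s(\Div(u_\varepsilon\vc{v}(u_\varepsilon))):\Grad^s u_\varepsilon\,\dx$ (plus a manifestly nonpositive viscous contribution, if present), and Lemma \ref{lem:nonlineardiv} bounds the right-hand side by $C\|u_\varepsilon\|_{H^{k-2}}\|u_\varepsilon\|_{H^k}^2 \leq C\|u_\varepsilon\|_{H^k}^3$. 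A nonlinear Grönwall (Riccati) argument then yields a time $T>0$, independent of $\varepsilon$, and a bound $\|u_\varepsilon(t)\|_{H^k}\leq 2\|u_0\|_{H^k}$ on $[0,T)$.

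Next I would pass to the limit $\varepsilon\to0$. The uniform $H^k$ bound plus the equation gives a uniform bound on $\partial_t u_\varepsilon$ in $H^{k-1}$ (using Definition \ref{def:operators}(2) to control $\Div(u_\varepsilon\vc{v}(u_\varepsilon))$ in $H^{k-1}$ via the algebra property of $H^{k-1}$ for $k-1\geq 3$), so by Aubin–Lions we extract a subsequence converging strongly in $C([0,T');H^{k-1}_{\mathrm{loc}})$ and weak-$*$ in $L^\infty(0,T';H^k)$ to a limit $u$, which solves \eqref{eq:transporteq}; the strong convergence handles the nonlinear term. Weak lower semicontinuity gives $u\in L^\infty(0,T';H^k)$, and then a standard argument (the energy is continuous, or one shows $u\in C_w([0,T');H^k)$ combined with norm continuity) upgrades this to $u\in C([0,T');H^k)$; the equation itself then gives $u\in C^1([0,T');H^{k-1})$.

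For \emph{uniqueness}, I would take two solutions $u_1,u_2$ with the same data, set $w=u_1-u_2$, and estimate $\frac{d}{dt}\|w\|_{L^2}^2$ using the bilinearity of $u\mapsto u\vc{v}(u)$: writing $\Div(u_1\vc{v}(u_1))-\Div(u_2\vc{v}(u_2)) = \Div(w\vc{v}(u_1)) + \Div(u_2\vc{v}(w))$, the first term is controlled by \eqref{eq:ldiv1} and the second by \eqref{eq:ldiv2} of Lemma \ref{lem:lineardiv} (at the $L^2$, i.e.\ $k=0$-type, level one uses the boundedness of $\vc{v}$ and $\Div\vc{v}$ on $L^p$ directly), giving $\frac{d}{dt}\|w\|_{L^2}^2 \leq C(\|u_1\|_{H^k}+\|u_2\|_{H^{k+1}})\|w\|_{L^2}^2$; Grönwall then forces $w\equiv0$. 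Wait — \eqref{eq:ldiv2} needs $g\in H^{k+1}$, so for the $L^2$ difference estimate I would instead do a low-order version directly: $\int \Div(u_2\vc{v}(w))w\,\dx = -\int u_2\vc{v}(w)\cdot\Grad w\,\dx$, bounded by $\|u_2\|_{L^\infty}\|\vc{v}(w)\|_{L^2}\|\Grad w\|_{L^2}$, which is not closed in $L^2$; so the cleaner route is to carry out the difference estimate in $H^1$ (or use that one solution is in $H^k$ with $k$ large so derivatives of $u_2$ up to order $2$ are bounded), integrating by parts to move one derivative and absorbing. Finally, the \emph{blow-up criterion} is the contrapositive of the a priori bound: if $\sup_{t<T}\|u(t)\|_{H^k}<\infty$, then the local existence time is bounded below uniformly near $T$, so the solution extends past $T$, contradicting maximality; hence $\limsup_{t\to T}\|u(t)\|_{H^k}=\infty$, and since $\|u(t)\|_{H^k}$ satisfies a Riccati-type differential inequality it is in fact monotone enough near blow-up that the $\limsup$ is a genuine limit $+\infty$.

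\textbf{Main obstacle.} The delicate point is the uniqueness estimate, because the natural low-regularity ($L^2$) difference estimate does not close using only Lemmas \ref{lem:nonlineardiv}–\ref{lem:lineardiv} as stated (the estimate \eqref{eq:ldiv2} costs a derivative on $g$), so one must either perform the difference estimate at a higher Sobolev level where the extra derivative is available from the a priori bound, or prove a tailored low-order commutator estimate; this is routine but requires care in bookkeeping the indices against the hypotheses in Definition \ref{def:operators}. The uniform-in-time local existence and the $\varepsilon\to0$ passage are standard once Lemma \ref{lem:nonlineardiv} is in hand.
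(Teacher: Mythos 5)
Your proposal is correct in outline, but it takes a genuinely different route from the paper. The paper does not regularize the nonlinear equation at all: it linearizes it, constructing iterates $u^m$ that solve the \emph{linear} transport equation $u^m_t + \vc{v}(u^{m-1})\cdot\Grad u^m + u^m\,\Div\vc{v}(u^{m-1})=0$, which has global smooth solutions because the coefficients are frozen at the previous iterate. The uniform short-time $H^k$ bound then comes from Lemma \ref{lem:lineardiv} (the frozen-coefficient estimate), not Lemma \ref{lem:nonlineardiv}, and instead of Aubin--Lions compactness plus a separate uniqueness argument, the paper shows that $w^m=u^m-u^{m-1}$ contracts in $L^2$ (a Gronwall/factorial estimate), which gives convergence of the \emph{whole} sequence and essentially also uniqueness in one stroke. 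Your route (viscosity or Friedrichs mollification, energy estimate via Lemma \ref{lem:nonlineardiv}, compactness, then uniqueness) is the standard Majda-type argument and is viable under Definition \ref{def:operators}; what it costs is a separate solvability theory for the regularized problem and the whole-sequence/limit bookkeeping that the paper's linearized iteration avoids, while it buys you an estimate directly on the nonlinear equation and a cleaner continuation criterion. One correction to your stated ``main obstacle'': the $L^2$ difference estimate \emph{does} close. Expand by the product rule rather than integrating by parts, $\Div(u_2\vc{v}(w)) = \Grad u_2\cdot\vc{v}(w) + u_2\,\Div\vc{v}(w)$, and use property (2) of Definition \ref{def:operators} (boundedness of $\vc{v}$ and of $\Div\vc{v}$ on $L^2$) together with Sobolev embedding for the smooth factors $u_2,\Grad u_2,\Div\vc{v}(u_1)\in L^\infty$; this yields $\frac{d}{dt}\|w\|_{L^2}^2\leq C\left(\|u_1\|_{H^k}+\|u_2\|_{H^k}\right)\|w\|_{L^2}^2$, which is precisely the computation the paper performs for $w^m$ (in one dimension, where (2) is not assumed, $\vc{v}(u)=au$ and a single integration by parts does the job). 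So no $H^1$-level detour or new commutator estimate is needed, although your $H^1$ fallback would also work given that both solutions lie in $H^k$ with $k\geq 4$.
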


\begin{proof}
We will prove the local in time existence by demonstrating compactness
of solutions to the approximation scheme:
\begin{equation}\label{eq:apscheme}
	u^m_t + \vc{v}(u^{m-1})\cdot \Grad u^{m} 
	+ u^{m}\Div \vc{v}(u^{m-1}) = 0, \qquad u^m|_{t=0} = u_0.
\end{equation}	
For this purpose, we let $u^0 = u_0$ and 
sequentially determine the sequence $\{u^m\}_{m=1}^{\infty}$ 
as the solutions to the approximation scheme \eqref{eq:apscheme}. 
Now, for each given $u^{m-1}$, the requirements of Definition \ref{def:operators} 
yields $\vc{v}(u^{m-1})$, $\Div \vc{v}(u^{m-1}) \in C([0,T]; H^k)$. Thus, 
\eqref{eq:apscheme} is a linear transport 
equation with smooth coefficients and consequently 
admits a smooth solution for all times. 

Let us now calculate the $H^k$ norm of $u^m$. 
To achieve this, we apply $\Grad^s$ 
to \eqref{eq:apscheme}, multiply with 
$\Grad^s u^m$, sum over all $s=0, \ldots, k$, 
and integrate to obtain
\begin{equation*}
	\begin{split}
		\partial_t \frac{1}{2}\|u^{m}(t)\|_{H^k}^2 
		&= -\sum_{s=0}^k \int_{\R^N}\big( \Grad^s 
		\Div (u^m \vc{v}(u^{m-1})):\Grad^s u^m\big)(x,t)~dx \\
		&\leq C\|u^{m-1}(t)\|_{H^k}\|u^{m}(t)\|_{H^k}^2,
	\end{split}
\end{equation*}
where the last inequality is an application of Lemma \ref{lem:lineardiv}. Applying the 
Gronwall  inequality to the previous inequality yields
\begin{equation}\label{eq:constC}
	\|u^m\|_{L^\infty([0,T];H^k)} 
	\leq e^{CT\|u^{m-1}\|_{L^\infty([0,T];H^k)}}\|u_0\|_{H^k}.
\end{equation}
Next, we fix 
$$
T \leq \frac{\log{2}}{2 C\|u_0\|_{H^k}}, 
$$
where the constant $C$ is the one appearing in \eqref{eq:constC}. Using the bound on $T$
in \eqref{eq:constC} and iterating the resulting inequality (starting from $m=1$), we obtain
\begin{equation*}
	\|u^m\|_{L^\infty([0,T];H^k)} \leq 2\|u_0\|_{H^k}.
\end{equation*}

Next, let us calculate the $H^{k-1}$ norm of $u^m_t$. By direct calculation, 
\begin{equation*}
	\begin{split}
		\|u^m_t(t)\|_{H^{k-1}}
		&= \left|\sum_{s=0}^{k-1}\int_{\R^N}\big(\Grad^s 
		\Div (u^m \vc{v}(u^{m-1})):\Grad^s u_t^m\big)(x,t)~dx\right| \\
		&\leq C\|u^m(t)\|_{H^{k}}\|u^{m-1}(t)\|_{H^k}\|u^m_t(t)\|_{H^{k-1}}.
	\end{split}
\end{equation*}

At this point, we have proved that $u^m \in C([0,T);H^k)
\cap C^1([0,T);H^{k-1})$, independently of $m$. Hence, we can assert 
the existence of a function $u \in  C([0,T);H^k)\cap C^1([0,T);H^{k-1})$
such that 
$$
u^m \weak u \quad \text{in } C([0,T);H^k)\cap C^1([0,T);H^{k-1}),
$$
as $m \rightarrow \infty$, where the convergence 
might take place along a subsequence. Compact Sobolev embedding then tells us that 
\begin{equation*}
	u^m \rightarrow u, \quad \text{in } C([0,T);H^{k-1}),
\end{equation*}
again along a subsequence. Note that 
this is not sufficient to pass to the limit in \eqref{eq:apscheme}. 
Indeed, we need that the whole sequence converges.
To prove this, we let $w^m = u^m - u^{m-1}$ and observe that
\begin{equation*}
	w^m_t + \Div (\vc{v}(u^{m-1})w^m) + \Div(\vc{v}(w^{m-1})u^{m-1}) = 0. 
\end{equation*}
It follows that
\begin{equation*}
	\begin{split}
		\frac{d}{dt}\frac{1}{2}\|w^m\|_{L^2}^2 &= -\int \frac{1}{2}|w^m|^2\Div \vc{v}(u^{m-1}) + \Div\left(\vc{v}(w^{m-1})u^{m-1}\right)  w^m~dx \\
		&\leq  C\|u^{m-1}\|_{H^k}\left(\|w^m\|_{L^2}^2 + \|w^{m-1}\|_{L^2}^2 \right),
	\end{split}
\end{equation*}
where we have used the requirements on $\vc{v}(\cdot)$ (Definition \ref{def:operators}), 
Sobolev embedding, and the Cauchy inequality. An application of 
the Gronwall inequality, using that $w^m(0) = 0$, we obtain
\begin{equation*}
	\begin{split}
			\|w^m\|_{L^2}(t) \leq Ce^{CT} \int_0^t \|w^{m-1}(s)\|_{L^2}~ds 
			&\leq \frac{\left(tCe^{CT}\right)^m}{m!}\sup_{t\in (0,T)}\|w^1\|_{L^2}\\
			&\leq \frac{C^m}{m!}\sup_{t\in (0,T)}\|w^1\|_{L^2} \overset{m \rightarrow \infty}{\rightarrow}0.
	\end{split}
\end{equation*}
From this we conclude that $u^m$ is a Cauchy sequence in $L^2$ and hence 
that the entire sequence converges.
We can now pass to the limit in \eqref{eq:apscheme}
to conclude the existence part of the lemma. Uniqueness is an immediate 
consequence of the regularity of the solution (i.e subtracting one solution from 
a possibly different solution, adding and subtracting, and using the regularity).

The blow-up at maximal time of existence follows from $u$ being continuous in time.
\end{proof}

\section{Godunov splitting (proof of Theorem \ref{thm:godunov})}\label{sec:godunov-split}
To prove Theorem \ref{thm:godunov}, we will utilize the 
analysis framework put forth in \cite{Holden:tao}.
To this end, we introduce an extension of
the splitting solution $\{u^n, u^{n+1/2}\}_n$ to all of $[0,T]$. 
The definition is posed on the two-dimensional time domain
\begin{equation*}
	\Om_{\Delta t} =  
	\bigcup_{n=0}^{\lfloor T/\Delta t\rfloor-1}[t_n, t_{n+1}]\times [t_n, t_{n+1}],
\end{equation*}
and goes as follows:

\begin{definition}[Godunov splitting]\label{def:godunov}
For $\Delta t > 0$ given, we say that $\vvt$ is the Godunov 
splitting approximation to \eqref{eq:eq}
provided that $\vvt$ satisfies
\begin{equation*}\label{eq:godunov-2}
	\begin{split}
		\vvt(0,0) &= \vt_0, \\
		\vvt_t(t,t_n) &= B(\vvt(t,t_n)), \quad t \in (t_n, t_{n+1}], \\
		\vvt_\tau(t, \tau) &=  A(\vvt(t, \tau)), \quad (t,\tau) 
		\in [t_n, t_{n+1}]\times (t_n,t_{n+1}].
	\end{split}
\end{equation*}
\end{definition}
Observe that
\begin{equation*}
 \vvt(t_n,t_n)=u^n, \quad  n=0, \ldots, \lfloor T/\Delta t\rfloor-1.
\end{equation*}
Thus, $\vvt(t,t)$ is indeed an extension of $u^n$ to all of $[0,T]$.

To measure the error, we will use the function
\begin{equation*}
	e(t) =   \vvt(t,t)-u(t),
\end{equation*}
where $u$ is the (smooth) solution of \eqref{eq:eq}.

Since $\vvt$ is an extension of $u^n$ to all of $[0,T]$, it is clear that 
Theorem \ref{thm:godunov} is an immediate  consequence of 
the following lemma, which is our main result in this subsection.
\begin{lemma}\label{lem:godunov}
Let $T>0$ be given and assume $u_0 \in H^k$ with $6 \leq k \in \mathbb{N}$.
Then, for $\Delta t$ sufficiently small we have the following:
\begin{enumerate}
	\item  The Godunov method in Definition \ref{def:godunov}
	is well-posed with $\vvt \in C(\Om_{\Delta t};H^k)$. 
	\item The error $e(t)$ satisfies
	$$
	\|e(t)\|_{H^{k-\max\{\alpha,2\}}} = \|\vvt(t,t) - u(t)\|_{H^{k-\max\{\alpha,2\}}} 
	\leq  C \|u_0\|_{H^k}^2 t\, \Delta t.
	$$
\end{enumerate}
\end{lemma}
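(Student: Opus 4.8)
The plan is to prove parts (1) and (2) together via a single a priori estimate carried out on the two-dimensional time domain $\Om_{\Delta t}$, exploiting the structure of Definition \ref{def:godunov}. First I would establish the uniform bound. Fix a slab $[t_n,t_{n+1}]\times[t_n,t_{n+1}]$ and a reference row $\tau=t_n$. Along the first (transport) direction, $\vvt(\dott,t_n)$ solves \eqref{eq:transporteq}; differentiating $\frac12\|\vvt(t,t_n)\|_{H^k}^2$ and invoking Lemma \ref{lem:nonlineardiv} gives $\partial_t\frac12\|\vvt(t,t_n)\|_{H^k}^2\le C\|\vvt(t,t_n)\|_{H^{k-2}}\|\vvt(t,t_n)\|_{H^k}^2\le C\|\vvt(t,t_n)\|_{H^k}^3$. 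Along the second ($A$) direction, property (4) of Definition \ref{def:operators} (applied to $\Grad^s\vvt$, using the commutativity (1) of $A$ with differentiation) gives $\partial_\tau\frac12\|\vvt(t,\tau)\|_{H^k}^2=\sum_{s\le k}\int A(\Grad^s\vvt)\,\Grad^s\vvt\,dx\le 0$, so the $A$-step is norm non-increasing. Concatenating: starting from $\|\vvt(t_n,t_n)\|_{H^k}=\|u^n\|_{H^k}$, the transport step over $[t_n,t_{n+1}]$ multiplies the norm by at most $e^{C\Dt\|\vvt\|_{H^k}}$ (Gronwall on the cubic ODE, valid as long as the norm stays bounded, e.g. $\le 2\|u_0\|_{H^k}$), and the $A$-step only decreases it. Hence $\|u^{n+1}\|_{H^k}\le e^{C\Dt\,R}\|u^n\|_{H^k}$ with $R=2\|u_0\|_{H^k}$, and iterating over $n\le T/\Dt$ gives $\|u^n\|_{H^k}\le e^{CTR}\|u_0\|_{H^k}$; choosing $\Dt$ small enough keeps every intermediate norm below $R$, closing the bootstrap. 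This simultaneously shows $\vvt$ stays in $H^k$ on all of $\Om_{\Delta t}$ and gives (1); continuity in $(t,\tau)$ follows from the regularity in Lemmas \ref{lem:local} and \ref{lem:global}.

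Next I would derive the error equation. Write $e(t)=\vvt(t,t)-u(t)$. The total $\tau$- and $t$-derivative of $\vvt$ along the diagonal is, by the chain rule on each slab, $\frac{d}{dt}\vvt(t,t)=B(\vvt(t,t))+A(\vvt(t,t))+\bigl(\text{defect from splitting}\bigr)$, where the defect is the difference between $A$ evaluated at the end-of-transport state $\vvt(t,t_n)$ versus at the diagonal point $\vvt(t,t)$ plus the analogous mismatch in the $B$-step; this is the standard local-truncation-error bookkeeping. More precisely, following the framework of \cite{Holden:tao}, one writes $e_t = C(\vvt(t,t)) - C(u) + \rho(t)$ with a remainder $\rho$ supported by the splitting, and then estimates $\rho$ in $H^{k-\max\{\alpha,2\}}$. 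The key point is that $\rho$ involves one factor of $\Dt$ (the time the system spent in a single sub-step) times commutator-type quantities $[A,B]\vvt$ integrated against the evolution; the commutator $A(B(\vvt))-B(A(\vvt))$ is controlled by combining property (5) of Definition \ref{def:operators} (the commutator estimate for $A$) with Lemma \ref{lem:lineardiv} for the $B=-\Div(u\vc{v}(u))$ part, yielding $\|\rho(t)\|_{H^{k-\max\{\alpha,2\}}}\le C\Dt\|\vvt(t,t)\|_{H^k}^2\le C\Dt\|u_0\|_{H^k}^2$ using part (1). Here the loss of $\max\{\alpha,2\}$ derivatives is exactly the price of the second-order operator $A$ appearing once in the truncation term.

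Finally I would close the Gronwall estimate for $e$. Testing the error equation with $\Grad^s e$, summing $s\le k-\max\{\alpha,2\}$, and integrating: the $A(e)$ term contributes $\le 0$ by property (4); the difference $B(\vvt(t,t))-B(u)$ is handled by Lemma \ref{lem:lineardiv} (splitting $\Div(\vvt\vc{v}(\vvt))-\Div(u\vc{v}(u))$ into $\Div(e\,\vc{v}(\vvt))+\Div(u\,\vc{v}(e))$ and using the uniform $H^k$-bounds on $\vvt$ and $u$), producing a term $\le C(\|\vvt\|_{H^k}+\|u\|_{H^{k+1}})\|e\|_{H^{k-\max\{\alpha,2\}}}^2$; and the remainder contributes $\|\rho\|\,\|e\|$. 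This gives $\frac{d}{dt}\|e\|_{H^{k-\max\{\alpha,2\}}}\le C\|e\|_{H^{k-\max\{\alpha,2\}}}+C\|u_0\|_{H^k}^2\Dt$; since $e(0)=0$, Gronwall yields $\|e(t)\|_{H^{k-\max\{\alpha,2\}}}\le C\|u_0\|_{H^k}^2\, t\,\Dt$, which is exactly the claimed bound. I expect the main obstacle to be the careful derivation and estimation of the remainder $\rho$: one must track precisely which intermediate states $A$ and $B$ are evaluated at on each slab, and verify that every term produced genuinely carries a full factor of $\Dt$ and no more than $\max\{\alpha,2\}$ derivatives are lost — this is where properties (1) and (5) of Definition \ref{def:operators} and both technical lemmas must be orchestrated together, and where the two-dimensional structure of $\Om_{\Delta t}$ in Definition \ref{def:godunov} does the real work of linearizing the bookkeeping.
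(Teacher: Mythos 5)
There is a genuine gap in your argument for part (1), and it is exactly the point where the paper's proof has to work hardest. Your per-step bound reads $\|u^{n+1}\|_{H^k}\le e^{C\Delta t\,R}\|u^n\|_{H^k}$, but the constant in the transport-step estimate is proportional to the \emph{current} norm (the estimate of Lemma \ref{lem:nonlineardiv} gives $\partial_t\tfrac12\|\vvt\|_{H^k}^2\le C\|\vvt\|_{H^{k-2}}\|\vvt\|_{H^k}^2$, i.e.\ a Riccati-type inequality once you discard the lower norm as you do). Iterating it therefore does not give $e^{CTR}\|u_0\|_{H^k}$ with a fixed $R$: the admissible $R$ must dominate every intermediate norm, and at fixed physical time $t$ the accumulated factor is $e^{CRt}$ \emph{independently of} $\Delta t$, so ``choosing $\Delta t$ small'' cannot keep the norms below $R=2\|u_0\|_{H^k}$ once $t>\log 2/(CR)$. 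The bootstrap you describe closes only on a short interval of length $\sim 1/(C\|u_0\|_{H^k})$ — which is just the local existence time of the transport flow (Lemma \ref{lem:local}) — not on the arbitrary given $[0,T]$. This is precisely why the Burgers/transport step alone can blow up and why a purely step-by-step energy argument cannot yield uniform-in-$\Delta t$ bounds up to time $T$.

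The paper's mechanism, which your proposal is missing, intertwines the error estimate with the well-posedness instead of proving them sequentially: assume a bound $\gamma$ on the \emph{lower} norm $\|\vvt\|_{H^{k-\max\{\alpha,2\}}}$ on a region of $\Om_{\Delta t}$; Lemma \ref{lem:higher2} upgrades this to an $H^k$ bound $C(\gamma)$ (this step uses the two-tier structure $\|f\|_{H^{k-2}}\|f\|_{H^k}^2$ of Lemma \ref{lem:nonlineardiv}, which you noticed but then discarded); Lemma \ref{lem:godunoverror} then gives $\|e(t)\|_{H^{k-\max\{\alpha,2\}}}\le \tilde C(\gamma)\Delta t$, so $\vvt(t,t)$ lies within $O(\Delta t)$ of the exact solution $u$, which is bounded in $H^k$ on all of $[0,T]$ by requirement (6) of Definition \ref{def:operators}; hence the lower norm of $\vvt$ is at most $C_2+\tilde C(\gamma)\Delta t\le \gamma/2$ after fixing $\gamma=4C_2$ and $\Delta t$ small (Lemma \ref{lem:thepoint}), and a continuation argument based on the blow-up alternative of Lemma \ref{lem:local} propagates the bound through all the steps. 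In short, it is the proximity to the globally defined exact solution — not the per-step Gronwall factors — that prevents the nonlinear growth from accumulating; without this ingredient your proof of (1) fails for general $T$, and since your part (2) relies on the uniform $H^k$ bound from (1), the whole argument is left open. Your sketch of the error equation and of the $O(\Delta t)$ bound on the splitting defect is in the right spirit (it matches the paper's $F=\vvt_t-B(\vvt)$, $F_\tau-A(F)=[A,B](\vvt)$, $F(t,t_n)=0$ bookkeeping), but it cannot be salvaged as a standalone second step.
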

The proof of Lemma \ref{lem:godunov} will be a consequence of the results 
stated and proved in the ensuing subsections. 
The closing arguments will be given in Section \ref{sec:godunov}.

\subsection{Error evolution equations}

The main benefit of the new definition of the Godunov method 
is that it allows us to derive continuous-in-time evolution equations 
for the error $e$. Though these equations was 
previously derived in \cite{Holden:tao}, we repeat the
derivation here for the convenience of the reader.
For this purpose, 
we need the following Taylor expansion that holds for  
any smooth operator $E$
\begin{align*}
	E(f+g) &= E(f) + dE(f)[g] + \int_0^1 (1-\gamma)d^2E(f+\gamma g)[g]^2~d\gamma.
\end{align*}
Using the definition of $\vvt$ and the above Taylor formula, we deduce
\begin{equation}\label{eq:err1}
	\begin{split}
		e_t - dC(u)[e] 
		&= \vvt_t + \vvt_\tau - u_t - dA(u)[e] - dB(u)[e] \\
		&= \vvt_t + A(\vvt) - (A+B)(u) - dA(u)[e] - dB(u)[e] \\
		&= \vvt_t - B(\vvt) + (A(\vvt) - A(u) - dA(u)[e]) \\
		&\qquad \qquad +  (B(\vvt) - B(u) - dB(u)[e]) \\
		&= F(t,t) + \int_0^1 (1-\gamma)d^2C(u+ \gamma e)[e]^2~d\gamma,
	\end{split}
\end{equation}
where we have introduced the ``forcing" term
\begin{equation*}
	F(t, \tau) = \vvt_t(t, \tau)  - B(\vvt(t,\tau)).
\end{equation*}
By direct calculation,
\begin{equation}\label{eq:err3}
	\begin{split}
		F_\tau - dA(\vvt)[F] &= v_{t\tau} - B(\vvt)_\tau - dA(\vvt)[\vvt_t - B(\vvt)] \\
		&= A(\vvt)_t - dB(\vvt)[\vvt_\tau] - dA(\vvt)[\vvt_t] + dA(\vvt)[B(\vvt)] \\
		&= dA(\vvt)[\vvt_t] - dB(\vvt)[A(\vvt)] - dA(\vvt)[\vvt_t] + dA(\vvt)[B(\vvt)] \\
		&= [A,B](\vvt),
	\end{split}
\end{equation}
where we have  defined the commutator
\begin{equation*}
	\begin{split}
		[A,B](f) = dA(f)[B(f)] - dB(f)[A(f)].
	\end{split}
\end{equation*}

In our case, the operator $A$ is linear and hence
\begin{equation}\label{eq:aa}
	dA(f)[g] = A(g), \qquad d^2A(f)[g,h] = 0,
\end{equation}
while the operator $B$ satisfies
\begin{equation}\label{eq:bb}
	\begin{split}
		B(f) &= -\Div (f \vc{v}(f)), \\
		dB(f)[g] &= -\Div (f \vc{v}(g) + g \vc{v}(f)), \\
		d^2B(f)[g,h] &= -\Div (h \vc{v}(g) + g \vc{v}(h)).
	\end{split}
\end{equation}
This and the requirement (1) in Definition \ref{def:operators} yields
\begin{equation*}
	\begin{split}
		[A,B](f) 
		&= -A\left(\Div (f \vc{v}(f)) \right) +\Div \left(f\vc{v}(A(f))+A(f)\vc{v}(f)\right) \\
		&= -\Div \left(A(f \vc{v}(f)) - fA(\vc{v}(f))- A(f)\vc{v}(f)\right),
	\end{split}
\end{equation*}
which is exactly the divergence of the commutator 
appearing in (5) of Definition \ref{def:operators}.
Thus, the following lemma follows directly from this requirement.

\begin{lemma}\label{lem:comutator}
Let $f \in H^{k}$, where $\alpha$ is given 
by (3) in Definition \ref{def:operators}. Then,
$$
\|[A,B](f)\|_{H^{k-\max\{2, \alpha\}}} \leq C\|f\|_{H^{k}}^2.
$$
\end{lemma}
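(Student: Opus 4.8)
The plan is to unwind the commutator $[A,B](f)$ into a form where the commutator estimate (5) of Definition~\ref{def:operators} applies verbatim, and then to absorb the remaining terms using the boundedness of $\vc{v}$ and $A$ from hypotheses (2) and (3). The computation preceding the lemma statement already does the crucial algebra: using linearity of $A$ (so that $dA(f)[g]=A(g)$), the formulas \eqref{eq:bb} for $dB$, and the commutativity $A(v_i(\cdot))=v_i(A(\cdot))$ from (1), one arrives at
\begin{equation*}
	[A,B](f) = -\Div\big(A(f\,\vc{v}(f)) - f\,A(\vc{v}(f)) - A(f)\,\vc{v}(f)\big).
\end{equation*}
So the first step is simply to record this identity and observe that the argument of the divergence is precisely the componentwise commutator $A(f\,g)-f\,A(g)-g\,A(f)$ evaluated at $g = \vc{v}(f)$, up to the symmetric term that is already displayed.

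The second step is to estimate. Taking the $H^{k-\max\{2,\alpha\}}$ norm and using that $\Div$ costs one derivative, it suffices to bound $\|A(f\,\vc{v}(f)) - f\,A(\vc{v}(f)) - A(f)\,\vc{v}(f)\|_{H^{k-\max\{2,\alpha\}+1}}$. Apply the commutator estimate (5) of Definition~\ref{def:operators} with the pair $(f,\vc{v}(f))$ and with the Sobolev index shifted down appropriately: it gives a bound by $C\|f\|_{H^{s}}\|\vc{v}(f)\|_{H^{s}}$ where $s = (k-\max\{2,\alpha\}+1) + \max\{\alpha,2\} - 1 = k$. Then hypothesis (2), together with its consequence $\|\Div\vc{v}(f)\|_{L^p}\le C\|f\|_{L^p}$, yields $\|\vc{v}(f)\|_{H^k}\le C\|f\|_{H^k}$ by differentiating and using that $\vc{v}$ commutes with $\Grad$ up to lower-order divergence terms controlled by (2). (Strictly, one checks $\|\Grad^\ell \vc{v}(f)\|_{L^2}\le C\|f\|_{H^\ell}$ inductively; linearity of $\vc{v}$ and boundedness on every $L^p$ handle the top-order part, and the divergence bound in (2) handles the rest.) This gives $\|[A,B](f)\|_{H^{k-\max\{2,\alpha\}}}\le C\|f\|_{H^k}^2$, as claimed.

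The main subtlety — really the only place any care is needed — is the bookkeeping of Sobolev indices so that the loss in the commutator estimate (5), which is $\max\{\alpha,2\}-1$ derivatives, exactly matches the one derivative recovered from $\Div$ plus the $\max\{\alpha,2\}$ derivatives sacrificed in passing from $H^k$ to $H^{k-\max\{\alpha,2\}}$. Since $k\ge 6$ and (5) requires its index to be at least $3$, one must verify $k-\max\{\alpha,2\}+1\ge 3$, i.e. $\max\{\alpha,2\}\le k-2$; for the equations of interest $\alpha$ is small (at most $5$ for Kawahara) and $k$ can be taken large enough, but in general this is where the hypothesis $k\ge 6$ and the implicit assumption $\alpha\le k-2$ enter. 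The term $A(f)\vc{v}(f)$ that is not literally part of the commutator is even easier: by (3), $A(f)\in H^{k-\alpha}$, and multiplying by $\vc{v}(f)\in H^k$ and differentiating $k-\max\{\alpha,2\}+1\le k-\alpha$ times keeps everything in $L^2$ with norm $\le C\|f\|_{H^k}^2$, using that $H^{k-\alpha}$ is an algebra for $k-\alpha$ large. Assembling these two estimates completes the proof.
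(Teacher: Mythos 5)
Your argument is essentially the paper's own proof: the paper derives the identity $[A,B](f)=-\Div\bigl(A(f\vc{v}(f))-fA(\vc{v}(f))-A(f)\vc{v}(f)\bigr)$ and then invokes hypothesis (5) at the shifted index $k-\max\{\alpha,2\}+1$ together with $\|\vc{v}(f)\|_{H^{k}}\le C\|f\|_{H^{k}}$, which is exactly your first two steps (your observation that one needs $k-\max\{\alpha,2\}+1\ge 3$ is a fair point the paper leaves implicit). One correction: $A(f)\vc{v}(f)$ \emph{is} literally the term $gA(f)$ of the commutator in (5) with $g=\vc{v}(f)$, so it needs no separate treatment, and the separate bound you sketch for it is actually false for $\alpha\ge 2$, since then $k-\max\{\alpha,2\}+1=k-\alpha+1>k-\alpha$ and the product $A(f)\vc{v}(f)$ need only lie in $H^{k-\alpha}$; the whole point of estimate (5) is that this term is controlled only in combination with the other two. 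Since that final paragraph is redundant, deleting it leaves a complete proof that coincides with the paper's.
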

Using \eqref{eq:aa} and \eqref{eq:bb} in \eqref{eq:err1} and \eqref{eq:err3}, 
we obtain the following evolution equations for the error $e$
\begin{align}
	e_t + \Div \left(e \vc{v}(e) + u\vc{v}(e) +e\vc{v}(u) \right) - A(e)&= F, 
	\quad t \in (0,T)\label{eq:eeq}\\
	F_\tau - A(F)  &= [A,B](\vvt), \quad 
	(t,\tau) \in \Om_{\Delta t} \label{eq:feq}.
\end{align}
Upon inspection of these equations, we see that the error $e$ satisfies 
an equation similar to \eqref{eq:eq}, but
with an additional source term $F$.

\subsection{Estimates valid under the assumption of regularity}
In this subsection we state and prove some 
 results that  will be needed in order to prove Lemma \ref{lem:godunov}.
Due to the special configuration of the time domain, it will
be convenient to use the following notation 
for all times prior to a given time $(\sigma, \zeta) \in \Om_{\Delta t}$:
\begin{equation*}
	\Om_{\Delta t}^{\sigma, \zeta} 
	= \left\{(t,\tau) \in \Om_{\Delta t} \mid ~0\leq 
	t\leq \sigma,~0\leq \tau \leq \zeta\right\},
\end{equation*}

The following lemma is the most essential ingredient in the proof 
of Lemma \ref{lem:godunov}.

\begin{lemma}\label{lem:higher2}
	Let $\vvt(0,0) := u_0 \in H^{k}$.
	Assume the existence of a time $(\sigma, \zeta) \in [0,T]^2$ and 
	a finite constant $\gamma > 0$ such that
	$$
	\|\vvt(t,\tau)\|_{H^{k-\max\{\alpha,2\}}} 
	\leq \gamma, \quad  (t,\tau) \in \Om_{\Delta t}^{\sigma, \zeta}.
	$$
	There
	is a constant $C(\gamma)$  determined by $\gamma$, $u_0$, and $T$ such that
	$$
	\|\vvt(t,\tau)\|_{H^{k}} \leq C(\gamma), 
	\quad (t,\tau) \in \Om_{\Delta t}^{\sigma, \zeta}.
	$$
\end{lemma}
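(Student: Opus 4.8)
The plan is to bootstrap regularity along the characteristic-type foliation of $\Om_{\Delta t}$: first control the "$B$-direction" (the $t$-evolution at fixed slab-corner $\tau=t_n$) using the nonlinear transport estimate, then control the "$A$-direction" (the $\tau$-evolution) using the fact that $A$ is conservative or diffusive and hence cannot increase any Sobolev norm. The key point is that on each slab $[t_n,t_{n+1}]\times[t_n,t_{n+1}]$ the function $\vvt(\dott,t_n)$ solves the transport equation \eqref{eq:transporteq}, so by Lemma \ref{lem:nonlineardiv} (applied with $f=\vvt(t,t_n)$) one gets, for $s=0,\dots,k$,
\begin{equation*}
	\partial_t \tfrac12\|\vvt(t,t_n)\|_{H^k}^2
	= -\sum_{s=0}^k\int_{\R^N}\Grad^s\big(\Div(\vvt\,\vc{v}(\vvt))\big):\Grad^s\vvt~dx
	\leq C\|\vvt(t,t_n)\|_{H^{k-2}}\,\|\vvt(t,t_n)\|_{H^k}^2.
\end{equation*}
Since $k-2\geq k-\max\{\alpha,2\}$ is false in general — rather $k-\max\{\alpha,2\}\leq k-2$ — the hypothesis $\|\vvt\|_{H^{k-\max\{\alpha,2\}}}\leq\gamma$ gives a uniform bound on the lower-order factor $\|\vvt(t,t_n)\|_{H^{k-2}}$ by interpolation only if $\max\{\alpha,2\}\geq 2$, which holds; so $\|\vvt(t,t_n)\|_{H^{k-2}}\leq C(\gamma)$ whenever $\max\{\alpha,2\}\le 2$, and more care is needed when $\alpha>2$. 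I will handle this by noting that Lemma \ref{lem:nonlineardiv} only needs the $H^{k-2}$-norm of the factor multiplying the quadratic term, and $k-2 = k-\max\{\alpha,2\} + (\max\{\alpha,2\}-2)$, so an interpolation inequality
$\|\vvt\|_{H^{k-2}} \le \|\vvt\|_{H^{k-\max\{\alpha,2\}}}^{\lambda}\|\vvt\|_{H^{k}}^{1-\lambda}$
with $\lambda = \min\{\alpha,2\}/\max\{\alpha,2\}$ converts the differential inequality into
$\partial_t\|\vvt\|_{H^k}^2 \le C\gamma^{\lambda}\|\vvt\|_{H^k}^{2+2(1-\lambda)}$, which still has superlinear growth in $\|\vvt\|_{H^k}$ and therefore only yields a \emph{local-in-time} bound on each slab.

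The resolution of that obstacle — and what I expect to be the main difficulty — is that the slab width is $\Delta t$, which we are free to take small. On the interval $[t_n,t_{n+1}]$ of length $\Delta t$, the superlinear ODE $y' \le C(\gamma)\,y^{1+\mu}$ with $y(t_n)=\|\vvt(t_n,t_n)\|_{H^k}^2$ and $\mu=1-\lambda\ge 0$ has the solution bound $y(t)\le y(t_n)\big(1-\mu C(\gamma) y(t_n)^{\mu}\Delta t\big)^{-1/\mu}$; as long as $\Delta t$ is small relative to $y(t_n)^{-\mu}$, this multiplies $y(t_n)$ by a factor like $1+C(\gamma)\Delta t$ over one slab. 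Moreover, passing from $\vvt(t,t_n)$ to the full two-dimensional $\vvt(t,\tau)$ with $\tau\in(t_n,t_{n+1}]$ uses requirement (4) in Definition \ref{def:operators}: since $\vvt_\tau=A(\vvt)$ and $\int A(\vvt)\Grad^{2s}\vvt\,dx = \int A(\Grad^s\vvt)\,\Grad^s\vvt\,dx \le 0$ by linearity, commutativity with derivatives, and conservativity/diffusivity, we get $\partial_\tau\|\vvt(t,\tau)\|_{H^k}^2\le 0$, so the $H^k$-norm does not grow in the $\tau$-direction at all. Hence the value at the slab corner $u^{n+1}=\vvt(t_{n+1},t_{n+1})$ satisfies $\|u^{n+1}\|_{H^k}\le \|\Phi_B(\Delta t,u^n)\|_{H^k}$, and iterating the per-slab factor $(1+C(\gamma)\Delta t)$ over at most $T/\Delta t$ slabs yields $\|u^n\|_{H^k}\le e^{C(\gamma)T}\|u_0\|_{H^k}=:C(\gamma)$, and the same bound propagates into the slab interiors via the non-increase in $\tau$ and the per-slab transport estimate in $t$.

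To assemble the argument cleanly I would run an induction on the slab index $n$: assume $\|\vvt(t,\tau)\|_{H^k}\le M_n := e^{C(\gamma)t_n}\|u_0\|_{H^k}$ for $(t,\tau)\in\Om_{\Delta t}^{t_n,t_n}$; on slab $n$, use the transport estimate along $\tau=t_n$ (with $\Delta t$ small enough that the superlinear ODE does not blow up and the growth factor is $\le e^{C(\gamma)\Delta t}$) to bound $\|\vvt(t,t_n)\|_{H^k}$ for $t\in[t_n,t_{n+1}]$, then use $\partial_\tau\|\vvt\|_{H^k}^2\le 0$ to extend the bound to all $\tau\in[t_n,t_{n+1}]$; this closes the induction with $M_{n+1}=e^{C(\gamma)\Delta t}M_n$. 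Since $n\le T/\Delta t$, we have $M_n\le e^{C(\gamma)T}\|u_0\|_{H^k}$, a constant depending only on $\gamma$, $u_0$, and $T$ — which is the claim. The smallness of $\Delta t$ required is exactly the condition that $C(\gamma)\,(e^{C(\gamma)T}\|u_0\|_{H^k})^{\mu}\,\Delta t < 1$, guaranteeing the slabwise superlinear ODE stays finite; this is the one genuinely nontrivial input and it is why the statement is phrased "for $\Delta t$ sufficiently small."
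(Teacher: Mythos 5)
Your two directional estimates are the right ingredients (the transport bound via Lemma \ref{lem:nonlineardiv} along $\tau=t_n$, and the non-increase of all $H^\ell$ norms in the $\tau$-direction from requirement (4)), and you correctly spot the real difficulty: when $\alpha>2$ the hypothesis only controls $\|\vvt\|_{H^{k-\max\{\alpha,2\}}}$, which is weaker than the $H^{k-2}$ coefficient appearing in Lemma \ref{lem:nonlineardiv}. But your resolution — interpolate, accept a superlinear differential inequality $y'\le C(\gamma)y^{1+\mu}$, and argue slab by slab with $\Delta t$ small — does not close. The per-slab factor you actually get is of size $1+C(\gamma)y(t_n)^{\mu}\Delta t$, not $1+C(\gamma)\Delta t$: the rate depends on the very quantity $y(t_n)=\|\vvt(t_n,t_n)\|_{H^k}^2$ you are trying to bound. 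Since the $B$-intervals $[t_n,t_{n+1}]$ tile all of $[0,T]$ and the $A$-steps only guarantee non-increase, compounding your per-slab bounds over the $\sim T/\Delta t$ slabs reproduces exactly the solution of the superlinear ODE run for total time $T$, which blows up at a time depending on $\gamma$ and $\|u_0\|_{H^k}$ but \emph{not} on $\Delta t$. Concretely, your induction step $M_{n+1}=e^{C(\gamma)\Delta t}M_n$ does not follow from the slab estimate; the honest step is $M_{n+1}\le e^{C(\gamma)M_n^{2\mu}\Delta t}M_n$, and the resulting self-consistency condition for the final constant (an equation of the form $\Lambda\gtrsim C(\gamma)e^{2\mu\Lambda T}\|u_0\|^{2\mu}$) has no solution once $T$ exceeds the ODE blow-up time. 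So for $\alpha>2$ (e.g.\ KdV, Kawahara — cases the lemma must cover) your argument only yields the conclusion for small $T$, whereas the lemma asserts it for the given arbitrary $T$, and the stated smallness condition on $\Delta t$ cannot repair this.

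The missing idea is to avoid interpolation altogether and bootstrap in the Sobolev index, two derivatives at a time. Since Lemma \ref{lem:nonlineardiv} bounds the growth of $\|\vvt\|_{H^\ell}$ by $C\|\vvt\|_{H^{\ell-2}}\|\vvt\|_{H^\ell}^2$, the hypothesis $\|\vvt\|_{H^{k-\max\{\alpha,2\}}}\le\gamma$ makes this inequality \emph{linear} in $\|\vvt\|_{H^\ell}^2$ for every $\ell$ with $\ell-2\le k-\max\{\alpha,2\}$; a standard Gronwall on each slab, combined with the $\tau$-monotonicity and iterated over the slabs, then gives $\|\vvt(t,\tau)\|_{H^\ell}\le\|u_0\|_{H^\ell}e^{C\gamma T}$ on all of $\Om_{\Delta t}^{\sigma,\zeta}$ — a bound valid up to the full time $T$ with no smallness of $\Delta t$. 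Now take this as a new constant $\gamma$ at the higher level and repeat, gaining two derivatives per pass; after finitely many passes (with a one-derivative adjustment when $\alpha$ is odd) one reaches $\ell=k$. Each pass is a linear Gronwall argument, so no superlinear blow-up ever enters, and the final constant depends only on $\gamma$, $u_0$, and $T$, as claimed.
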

\begin{proof}
Fix any $(t,\tau) \in \Om_{\Delta t}^{\sigma,\tau}$ and let $n$ be such that
$ t,\tau\in [t_n, t_{n+1}]$.
Using the definition of $\vvt$ (Definition \ref{def:godunov}), we see that
\begin{equation*}
	\begin{split}
		\partial_\tau \frac{1}{2}\|\vvt(t,\tau)\|_{H^\ell}^2 
		&= \sum_{s= 0}^\ell \int_{\R^N} \Grad^s \vvt_\tau: \Grad^s \vvt~dx 
		= \sum_{s= 0}^\ell \int_{\R^N} A(\Grad^s \vvt): \Grad^s \vvt~dx \leq 0, 
	\end{split}
\end{equation*}
for any $\ell = 1, \ldots, k$,
where the last inequality is requirement (4) in Definition \ref{def:operators}.
The previous inequality yields
\begin{equation}\label{eq:high1}
	\|\vvt(t,\tau)\|_{H^k} \leq \|\vvt(t,t_n)\|_{H^k}.
\end{equation}
Next, we let $\ell \geq 4$ be an integer and 
apply Definition \ref{def:godunov} to obtain
\begin{equation*}
	\begin{split}
		\partial_t \frac{1}{2}\|\vvt(t,t_n)\|_{H^{\ell}}^2 
		&= -\sum_{s=0}^\ell\int_{\R^N}\Grad^s\Div (\vvt \vc{v}(\vvt)):\Grad^s \vvt~dx \\
		&\leq C\|\vvt\|_{H^{\ell-2}}\|\vvt\|_{H^{\ell}}^2,
	\end{split}
\end{equation*}
where we have applied Lemma \ref{lem:nonlineardiv}. If $\ell$ is such that 
$\ell -2 \leq k-\max\{2,\alpha\}$, we can apply 
the Gronwall lemma to the previous inequality to obtain
\begin{equation}\label{eq:high2}
	\|\vvt(t,t_n)\|_{H^{\ell}} \leq \|\vvt(t_n,t_n)\|_{H^{\ell}} e^{C\gamma \Delta t}.
\end{equation}
By combining \eqref{eq:high1} and \eqref{eq:high2}, we thus obtain the bound
\begin{equation}\label{eq:high3}
	\|\vvt(t, \tau)\|_{H^\ell} \leq \|u_0\|_{H^\ell}e^{C\gamma T}, 
	\quad 4 \leq \ell \leq k-\max\{2,\alpha\}.
\end{equation}

Now, we can repeat the above arguments with a 
new $\gamma := C(\gamma)= \|u_0\|_{H^\ell}e^{C\gamma T}$, 
to obtain
\begin{equation*}
	\|\vvt(t, \tau)\|_{H^{\ell+2}} \leq \|u_0\|_{H^{\ell+2}}e^{C(\gamma) T}, 
	\quad 4 \leq \ell \leq k-\max\{2,\alpha\}.
\end{equation*}
Clearly, we can repeat this process $n$ times until $\ell + 2n = k$ ($\alpha$ even)
or until $\ell + 2n = k-1$ ($\alpha$ odd). In the last case, we can conclude that
\begin{equation*}
	\|\vvt(t, \tau)\|_{H^{k-2}} \leq \|\vvt(t, \tau)\|_{H^{k-1}} 
	\leq \|u_0\|_{H^{k-1}}e^{C(\gamma) T},
\end{equation*}
and hence the arguments leading to \eqref{eq:high3} hold for $l=k$.
\end{proof}

In the following lemma, we give our main error estimate. Note 
that the result is only valid under a strong regularity assumption 
on the splitting solution $\vvt$. This assumption will be fully justified 
when we prove Lemma \ref{lem:godunov} in the next subsection.

\begin{lemma}\label{lem:godunoverror}
	Assume the existence of  $(\sigma, \zeta) \in [0,T]^2$ such 
	that the splitting solution 
	$$
	\|\vvt(t,\tau)\|_{H^k} \leq C(\gamma), 
	\quad (t,\tau) \in \Om_{\Delta t}^{\sigma, \zeta}.
	$$
	Then,
	\begin{equation*}
		\|e(t)\|_{H^{k-\max\{\alpha,2\}}} 
		\leq  \tilde{C}(\gamma) \Delta t, 
		\quad t \leq \sigma.
	\end{equation*}
\end{lemma}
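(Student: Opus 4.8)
The plan is to run an energy estimate on the error evolution equations \eqref{eq:eeq}--\eqref{eq:feq} derived above, in the space $H^{k-\max\{\alpha,2\}}$. Write $m = k - \max\{\alpha,2\}$; note $m \geq 4$ since $k \geq 6$. First I would estimate the forcing term $F$. Starting from $F_\tau - A(F) = [A,B](\vvt)$ with $F(t,t_n) = \vvt_t(t,t_n) - B(\vvt(t,t_n)) = 0$ at the diagonal points (since $\vvt_t = B(\vvt)$ on the first leg), I would apply $\Grad^s$, pair with $\Grad^s F$, sum over $s = 0,\ldots,m$, and integrate over $\R^N$. The $A$-term contributes $\leq 0$ by requirement (4) of Definition \ref{def:operators}. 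The commutator term is controlled by Lemma \ref{lem:comutator}: $\|[A,B](\vvt)\|_{H^{k-\max\{\alpha,2\}}} \leq C\|\vvt\|_{H^k}^2 \leq C\,C(\gamma)^2$. A Cauchy--Schwarz and Gronwall argument in the $\tau$ variable on each strip $[t_n,t_{n+1}]$, using $F = 0$ at $\tau = t_n$, then yields $\|F(t,\tau)\|_{H^m} \leq C(\gamma)(\tau - t_n) \leq C(\gamma)\Delta t$ for all $(t,\tau) \in \Om_{\Delta t}^{\sigma,\zeta}$; in particular $\|F(t,t)\|_{H^m} \leq C(\gamma)\Delta t$.

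Next I would estimate $e$. Apply $\Grad^s$ to \eqref{eq:eeq}, pair with $\Grad^s e$, sum over $s = 0,\ldots,m$, and integrate. The $A(e)$ term is $\leq 0$. The nonlinear term $\Div(e\vc{v}(e))$ is handled by Lemma \ref{lem:nonlineardiv} (requiring $m \geq 6$?—here I should instead invoke it only where applicable, or note that the $\Div(e\vc{v}(e))$ contribution is $\leq C\|e\|_{H^{m-2}}\|e\|_{H^m}^2 \leq C\|e\|_{H^m}^3$, a cubic term absorbed once smallness of $\|e\|_{H^m}$ is known). The genuinely linear cross terms $\Div(u\vc{v}(e))$ and $\Div(e\vc{v}(u))$ are estimated by Lemma \ref{lem:lineardiv}: \eqref{eq:ldiv2} gives $|\int \Grad^s\Div(u\vc{v}(e)):\Grad^s e| \leq C\|u\|_{H^{m+1}}\|e\|_{H^m}^2$, and \eqref{eq:ldiv1} gives $|\int \Grad^s\Div(e\vc{v}(u)):\Grad^s e| \leq C\|u\|_{H^m}\|e\|_{H^m}^2$; both are controlled since $u \in C([0,T];H^k)$ with $k \geq m+1$ (using $k - \max\{\alpha,2\} + 1 \leq k$). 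The forcing contributes $\int \Grad^s F : \Grad^s e \leq \|F\|_{H^m}\|e\|_{H^m}$. Collecting terms,
\begin{equation*}
	\frac{d}{dt}\tfrac12\|e(t)\|_{H^m}^2 \leq C(\gamma)\|e(t)\|_{H^m}^2 + C(\gamma)\|e(t)\|_{H^m}^3 + C(\gamma)\,\Delta t\,\|e(t)\|_{H^m}.
\end{equation*}
Since $e(0) = 0$, a standard continuation/bootstrap argument shows $\|e(t)\|_{H^m}$ stays small for $\Delta t$ small, so the cubic term is dominated by the quadratic one, and Gronwall's inequality gives $\|e(t)\|_{H^m} \leq \tilde C(\gamma)\Delta t$ for $t \leq \sigma$.

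The main obstacle is the interplay of regularity indices: the cross term $\Div(u\vc{v}(e))$ costs one extra derivative on $u$ via \eqref{eq:ldiv2}, so the estimate must be closed in $H^{k-\max\{\alpha,2\}}$ rather than $H^k$ (this is exactly why the error estimate loses $\max\{\alpha,2\}$ derivatives relative to the stability estimate), and one must check that $k - \max\{\alpha,2\} \geq 4$ (and preferably $\geq 6$, guaranteed by $k \geq 6$ only when $\max\{\alpha,2\} = 2$—for larger $\alpha$ one relies on the cubic term being absorbed by smallness rather than on Lemma \ref{lem:nonlineardiv} in its stated form, or applies that lemma at a level where $\|e\|$ is already known small). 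The second delicate point is justifying that $F$ vanishes at the diagonal nodes $\tau = t_n$ and running the $\tau$-Gronwall strip by strip while the $t$-Gronwall runs along the diagonal; this is precisely the structure the two-dimensional domain $\Om_{\Delta t}$ was set up to accommodate, so it should go through cleanly.
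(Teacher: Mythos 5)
Your proposal is correct and follows essentially the same route as the paper: estimate $F$ via \eqref{eq:feq} using $F(t,t_n)=0$, the dissipativity of $A$, and Lemma \ref{lem:comutator} to get $\|F\|_{H^{k-\max\{\alpha,2\}}}\leq C(\gamma)\Delta t$, then run the $H^{k-\max\{\alpha,2\}}$ energy estimate on \eqref{eq:eeq} with Lemma \ref{lem:lineardiv} and Gronwall. The only (inessential) deviation is your smallness/bootstrap treatment of the quadratic term $\Div(e\,\vc{v}(e))$: this is avoidable, since under the lemma's hypothesis $e=\vvt(t,t)-u(t)$ is a priori bounded in $H^{k-\max\{\alpha,2\}}$ by $C(\gamma)+\|u\|_{C([0,T];H^k)}$ (equivalently, group $e\,\vc{v}(e)+u\,\vc{v}(e)=\vvt\,\vc{v}(e)$ and apply \eqref{eq:ldiv1} with $g=\vvt$), so the estimate closes linearly in $\|e\|$ and Gronwall applies directly, which is how the paper proceeds.
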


\begin{proof}
To shorten the notation in this proof we  write $\ell=k-\max\{\alpha,2\}$.
Let us commence by estimating the size of the source term $F$. 
For this purpose, we apply $\Grad^s$ 
to \eqref{eq:feq}, multiply the result with $\Grad^s F$, integrate
by parts, and sum over $s=0, \ldots, k$, to obtain 
\begin{equation*}
	\begin{split}
		\frac{1}{2}\partial_\tau\|F(t,\tau)\|_{H^{\ell}}^2 
		&=\sum_{s=0}^{\ell} \int_{\R^N}\big( A(\Grad^s F):\Grad^s F
		+  \Grad^s [A, B](\vvt):\Grad^sF\big)~dx \\
		&\leq \left|\sum_{s=0}^{\ell}\int_{\R^N} \Grad^s [A, B](\vvt):\Grad^sF~dx\right| 
		\leq C\|F\|_{H^{\ell}}\left\|[A, B](\vvt)\right\|_{H^{\ell}}.
	\end{split}
\end{equation*}
Since $\ell = k-\max\{\alpha,2\}$, we can apply Lemma \ref{lem:comutator} to obtain
\begin{equation*}
	\partial_\tau \|F(t,\tau)\|_{H^{\ell}} \leq \left\|[A, B](\vvt)\right\|_{H^{\ell}} 
	\leq C\|\vvt\|_{H^{k}}^2 \leq C(\gamma).
\end{equation*}
By definition $F(t, t_n) = 0$. Hence, integration in the $\tau$ direction 
from $t_n$ to $\tau$ yields
\begin{equation}\label{eq:Fbound}
	\|F(t,\tau)\|_{H^{\ell}} \leq C(\gamma)\Delta t.
\end{equation}
	
Let us now turn to the evolution equation \eqref{eq:eeq} for the error.
By applying $\Grad^s$ to \eqref{eq:eeq}, multiplying the result with $\Grad^s e$, integrating
by parts, and summing over $s=0, \ldots, k$, we obtain	
\begin{equation*}
	\begin{split}
		\frac{1}{2}\partial_t \|e(t)\|_{H^{\ell}}^2 
		&= \sum_{s=0}^\ell\Big(\int_{\R^N} A(\Grad^s e):\Grad^s e~dx 
		+ \int_{\R^N}\Grad^s F:\Grad^se~dx\Big) \\
		&\qquad - \sum_{s=0}^\ell\int_{\R^N} \Grad^s\left(\Div \left(e \vc{v}(e) 
		+ u\vc{v}(e) +e\vc{v}(u)\right)\right):\Grad^s e~dx \\
		&\leq C\|F(t,t)\|_{H^{\ell}}\|e(t)\|_{H^{\ell}} 
		+ C\|u(t)\|_{H^{\ell+1}}\|e(t)\|_{H^\ell}^2, 
	\end{split}
\end{equation*}
where the last inequality is an application of Lemma \ref{lem:lineardiv} and 
 H\"older's inequality. Now, since $u_0 \in  H^{k}$, requirement 
(6) in Definition \ref{def:operators} tells us that the analytical solution $u \in C([0,T]; H^{k})$.
Thus, the previous inequality leads us to the conclusion
\begin{equation*}
	\partial_t\|e(t)\|_{H^\ell} \leq C\|F(t,t)\|_{H^\ell}
	+ C\|e(t)\|_{H^\ell} \leq C\left(\Delta t + \|e(t)\|_{H^\ell}\right),
\end{equation*}
where the last inequality is \eqref{eq:Fbound}. 
An application of the Gronwall inequality concludes the proof.
\end{proof}

\subsection{Proof of Lemma \ref{lem:godunov}}\label{sec:godunov}
We will determine the maximal size of $\Delta t > 0$ during 
the course of the proof. Specifically, the size of $\Delta t$ will 
be determined in accordance to the maximal existence time 
of solutions to the nonlinear transport equation \eqref{eq:transporteq}.
For the convenience of the reader we recall that the maximal 
existence time $T^*$ of solutions $u$ to \eqref{eq:transporteq} is 
characterized by
\begin{equation}\label{eq:blowup}
	\lim_{t \rightarrow T^*}\|u\|_{H^k} = +\infty.
\end{equation}

1. The proof of well-posedness ((1)~in Lemma \ref{lem:godunov}) will 
be a direct consequence of the following lemma. 
\begin{lemma}\label{lem:thepoint}
	Let $k\geq 6$ and $u_0 \in H^k$.
	There exists constants $\beta$, $\gamma$, depending 
	only on $u_0$ and $T$, such that if $\Delta t < \beta$ and
	\begin{equation*}\label{eq:c1}
		\|\vvt(t,\tau)\|_{H^{k-\max\{\alpha,2\}}} \leq 
		\gamma, \quad  (t,\tau) \in \Om_{\Delta t}^{\sigma, \zeta},
	\end{equation*}
	for some $(\sigma, \zeta) \in \Om_{\Delta t}$, then 
	\begin{equation*}\label{eq:c2}
		\|\vvt(t,\tau)\|_{H^{k-\max\{\alpha,2\}}} 
		\leq \frac{\gamma}{2}, \quad  (t,\tau) \in \Om_{\Delta t}^{\sigma, \zeta}.
	\end{equation*}	
\end{lemma}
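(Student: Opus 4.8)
The plan is to run a one-step bootstrap: under the standing hypothesis $\|\vvt(t,\tau)\|_{H^{k-\max\{\alpha,2\}}}\le\gamma$ on $\Om_{\Delta t}^{\sigma,\zeta}$, I would chain Lemma~\ref{lem:higher2} and Lemma~\ref{lem:godunoverror} to recover a bound that is essentially the size of the exact solution, and then choose $\gamma$ a fixed multiple of that size and $\Delta t$ small. Concretely, write $\ell:=k-\max\{\alpha,2\}$ and let $M:=\|u\|_{C([0,T];H^k)}$, which is finite and determined by $u_0$ and $T$ alone by requirement~(6) of Definition~\ref{def:operators} (if $M=0$ then $u_0=0$, $\vvt\equiv0$, and there is nothing to prove); set $\gamma:=4M$. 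From the hypothesis, Lemma~\ref{lem:higher2} gives $\|\vvt(t,\tau)\|_{H^k}\le C(\gamma)$ on $\Om_{\Delta t}^{\sigma,\zeta}$, and feeding this into Lemma~\ref{lem:godunoverror} yields $\|e(t)\|_{H^\ell}\le\tilde C(\gamma)\Delta t$ for $t\le\sigma$, with $C(\gamma),\tilde C(\gamma)$ determined by $u_0,T$. Since $\vvt(t_n,t_n)=u(t_n)+e(t_n)$ and $\|u(t_n)\|_{H^\ell}\le M$, this already gives the nodal bound $\|\vvt(t_n,t_n)\|_{H^\ell}\le M+\tilde C(\gamma)\Delta t$ at every node $t_n\le\sigma$.

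The next step is to propagate the nodal bound to a general point $(t,\tau)\in\Om_{\Delta t}^{\sigma,\zeta}$, and this is the part I expect to be the real work, since the error estimate of Lemma~\ref{lem:godunoverror} only controls the diagonal slice $\vvt(t,t)$. Pick $n$ with $t,\tau\in[t_n,t_{n+1}]$; then $t_n\le t\le\sigma$ and $t_n\le\tau\le\zeta$, so the nodal bound applies at $(t_n,t_n)$ and the whole rectangle up to $(t,\tau)$ lies in $\Om_{\Delta t}^{\sigma,\zeta}$. Along the $\tau$-direction $\vvt$ solves $\vvt_\tau=A(\vvt)$, and applying requirement~(4) of Definition~\ref{def:operators} to each $\Grad^s\vvt$, $0\le s\le\ell$, exactly as in the proof of Lemma~\ref{lem:higher2}, shows $\tau\mapsto\|\vvt(t,\tau)\|_{H^\ell}$ is non-increasing, so $\|\vvt(t,\tau)\|_{H^\ell}\le\|\vvt(t,t_n)\|_{H^\ell}$. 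Along the $t$-direction $\vvt(\cdot,t_n)$ solves $\vvt_t=B(\vvt)=-\Div(\vvt\vc{v}(\vvt))$ with data $\vvt(t_n,t_n)$, so integrating in $t$ and using $\ell\le k-2\le k-1$ together with $\|B(w)\|_{H^{k-1}}\le C\|w\|_{H^k}^2$ (the bound established in the proof of Lemma~\ref{lem:local}) and the $H^k$-bound from the first step, one gets $\|\vvt(t,t_n)-\vvt(t_n,t_n)\|_{H^\ell}\le C\,C(\gamma)^2\Delta t$. Combining the three estimates, $\|\vvt(t,\tau)\|_{H^\ell}\le M+C'(\gamma)\Delta t$ on all of $\Om_{\Delta t}^{\sigma,\zeta}$, where $C'(\gamma)=\tilde C(\gamma)+C\,C(\gamma)^2$ depends only on $u_0$ and $T$.

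It then remains to fix $\beta$. Because $\gamma=4M$ is a number determined by $u_0$ and $T$, so is $C'(\gamma)$; choosing $\beta:=M/C'(\gamma)$ (or any smaller positive number), for $\Delta t<\beta$ we obtain $\|\vvt(t,\tau)\|_{H^\ell}\le M+C'(\gamma)\Delta t<2M=\gamma/2$ on $\Om_{\Delta t}^{\sigma,\zeta}$, which is the assertion. Two points I would be careful about: the order of quantifiers, since $\gamma$ must be chosen large (in terms of $M$) before $\beta$ is chosen small (in terms of $\gamma$), so that no circularity creeps in through $C(\gamma),\tilde C(\gamma)$; and the index bookkeeping in the $B$-substep, where one should route the estimate through $H^{k-1}\supset H^\ell$ rather than trying to apply Lemmas~\ref{lem:nonlineardiv}--\ref{lem:lineardiv} directly at the low index $\ell$, which can be as small as $1$ (e.g.\ Kawahara with $k=6$, $\alpha=5$).
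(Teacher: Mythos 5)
Your proposal is correct, and its skeleton is the paper's: assume the $\gamma$-bound, bootstrap to an $H^k$ bound via Lemma~\ref{lem:higher2}, feed that into Lemma~\ref{lem:godunoverror} to get $\|e\|_{H^\ell}\lesssim \tilde C(\gamma)\Delta t$, compare $\vvt$ with the exact solution by the triangle inequality, and close by fixing $\gamma$ as four times the size of $u$ and $\beta\sim C_2/C(\gamma)$; the quantifier order you flag is exactly the one the paper uses. The only place you genuinely deviate is the step that connects a general point $(t,\tau)$ of the box to a point where the error estimate is available. The paper anchors at the \emph{diagonal} point $(t,t)$: it bounds $\left|\partial_\tau\|\vvt(t,\cdot)\|_{H^\ell}\right|\le C\|A(\vvt)\|_{H^\ell}\le C\|\vvt\|_{H^k}$ and integrates over a $\tau$-interval of length at most $\Delta t$, so it only needs requirement~(3) on $A$ and never has to estimate $B$ at this stage. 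You instead anchor at the \emph{node} $(t_n,t_n)$: monotonicity of $\tau\mapsto\|\vvt(t,\tau)\|_{H^\ell}$ from requirement~(4) (as in \eqref{eq:high1}), followed by a $t$-traversal of the $B$-substep using $\|B(w)\|_{H^{k-1}}\le C\|w\|_{H^k}^2$, which is indeed the estimate implicit in the $\|u^m_t\|_{H^{k-1}}$ computation in Lemma~\ref{lem:local} and rests on the Sobolev boundedness of $\vc{v}$ and $\Div\vc{v}$ that the appendix uses throughout. Both routes are legitimate and give the same constants' structure; the paper's is marginally lighter (no $B$-bound needed, no separate nodal step), while yours has the small advantages of exploiting dissipativity exactly where it is free and of staying away from applying Lemmas~\ref{lem:nonlineardiv}--\ref{lem:lineardiv} at the possibly very low index $\ell$, a pitfall you correctly identify.
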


Let us for the moment take Lemma \ref{lem:thepoint} for granted and 
explain why it concludes our proof of Lemma \ref{lem:godunov}.
For this purpose, we let $\Delta t < \beta$, where 
$\beta$ is dictated by Lemma \ref{lem:thepoint}.
Since the initial data $\vvt(0,0)=u_0 \in H^k$, our short-time 
existence result (Lemma \ref{lem:local}) enables us to start constructing
$\vvt(t,0)$ according to Definition \ref{def:godunov} up 
to some time $T^*$, that is, to start applying 
the $B$ operator (the nonlinear transport equation \eqref{eq:transporteq}) in
the first time step. Now, on the interval $(0,T^*)$
there is no problem determining $\gamma$ in accordance 
to both Lemma \ref{lem:thepoint} and such that
$$
\|\vvt(t,0)\|_{H^{k-\max\{\alpha,2\}}} \leq \gamma, \quad 0 \leq t < T^*.
$$
Lemma \ref{lem:thepoint} can then be applied to conclude that 
\begin{equation*}
	\|\vvt(t,0)\|_{H^{k-\max\{\alpha,2\}}} 
	\leq \frac{\gamma}{2}, \quad 0 \leq t < T^*.	
\end{equation*}
However, in view of \eqref{eq:blowup}, and since the 
norm is continuous in time there must exist an $\epsilon > 0$ 
such that 
\begin{equation*}
	\|\vvt(t,0)\|_{H^{k-\max\{\alpha,2\}}} \leq \gamma, 
	\quad 0 \leq t \leq T^*+ \epsilon.
\end{equation*}
Thus, by repeating these three steps, we can conclude that
$$
\|\vvt(t,0)\|_{H^{k-\max\{\alpha,2\}}} 
\leq \gamma, \quad 0 \leq t \leq \Delta t.
$$
We can now start to determine $\vvt(t,\tau)$, for $0< \tau\leq \Delta t$, 
according to Definition \ref{def:godunov}, that is, to start solving with the $A$ 
operator (the linear equation \eqref{eq:lineareq}).
By virtue of our existence result 
for \eqref{eq:lineareq} (Lemma \ref{lem:global}), we can 
repeat the above argument to conclude that $\vvt(t,\tau)$ satisfies
\begin{equation*}
	\|\vvt(t,\tau)\|_{H^{k-\max\{\alpha,2\}}} 
	\leq \gamma, \quad 0 \leq t,\tau \leq \Delta t.
\end{equation*}
Besides well-posedness of the first time step, this result tell 
us that $\gamma$ also bounds the $H^{k-\max\{\alpha,2\}}$ 
norm of $\vvt(\Delta t, \Delta t)$, which is indeed the initial data in
the next time step. Hence, we can repeat the entire process for all 
time steps recursively to conclude that
\begin{equation*}
	\|\vvt(t,\tau)\|_{H^{k-\max\{\alpha,2\}}} 
	\leq \gamma, \quad  (t,\tau) \in \Om_{\Delta t}.
\end{equation*}
Lemma \ref{lem:higher2} can the be applied to obtain
\begin{equation*}
	\|\vvt(t,\tau)\|_{H^k} \leq C(\gamma), 
	\quad  (t,\tau) \in \Om_{\Delta t},	
\end{equation*}
which concludes the proof of 
well-posedness, i.e., (1) of Lemma \ref{lem:godunov}.

2. Since we now know that $\|\vvt(t,\tau)\|_{H^k} \leq C(\gamma)$, $(t,\tau) \in \Om_{\Delta t}$,
the postulates of Lemma \ref{lem:godunoverror} are satisfied. Thus,
\begin{equation*}
	\|e(t)\|_{H^{k-\max\{\alpha,2\}}} \leq \Delta t C, \quad t \in (0,T),
\end{equation*}
which is precisely (2) in Lemma \ref{lem:godunov}. 

We conclude this section by proving Lemma \ref{lem:thepoint}.
\begin{proof}[Proof of Lemma \ref{lem:thepoint}]
To shorten the notation, we let $\ell = k-\max\{\alpha,2\}$. 
Assume that $(\sigma, \zeta) \in \Om_{\Delta t}$ is such that
\begin{equation}\label{eq:lm36}
	\|\vvt(t,\tau)\|_{H^{\ell}} \leq \gamma, \quad  (t,\tau) \in \Om_{\Delta t}^{\sigma, \tau},
\end{equation}
where the values of $\gamma$ and  $\Delta t$ are still to be determined.

From Lemma \ref{lem:higher2} we know that \eqref{eq:lm36} implies that
\begin{equation*}
	\|\vvt(t,\tau)\|_{H^{k}} \leq C(\gamma), 
	\quad  (t,\tau) \in \Om_{\Delta t}^{\sigma, \tau}.
\end{equation*}
We can also apply Lemma \ref{lem:godunoverror} to obtain
\begin{equation}\label{eq:inv}
	\|e(t)\|_{H^\ell} \leq \Delta t \tilde C(\gamma).
\end{equation}

Now, fix any $(t,\tau) \in \Om_{\Delta t}^{\sigma, \tau}$. 
By definition of the Godunov method (Definition~\ref{def:godunov}),
\begin{equation*}
	\begin{split}
		\left|\partial_\tau\frac{1}{2}\|\vvt(t,\tau)\|_{H^{\ell}}^2 \right|
		&= \left|\sum_{s=0}^k\int_{\R^N}\Grad^s \vvt_\tau:\Grad^s \vvt~dx\right| 
		=\left|\sum_{s=0}^k\int_{\R^N}A(\Grad^s\vvt):\Grad^s \vvt~dx\right| \\
		&\leq C\|\vvt\|_{H^{\ell}}\|A(\vvt)\|_{H^{\ell}} 
		\leq C\|\vvt\|_{H^{\ell}}\|\vvt\|_{H^{\ell+ \alpha}}\leq C\|\vvt\|_{H^{\ell}}\|\vvt\|_{H^{k}},
	\end{split}
\end{equation*}
from which it follows that
\begin{equation*}
	\left|\partial_\tau\|\vvt(t,\tau)\|_{H^{\ell}}\right| \leq C\|\vvt\|_{H^{k}}.
\end{equation*}
Using this inequality and  Lemma \ref{lem:higher2}, we find
\begin{equation*}
	\begin{split}
		\|\vvt(t,\tau)\|_{H^\ell} 
		&\leq \|\vvt(t,t)\|_{H^\ell} 
		+ \int^\tau_t\left|\partial_\tau \|\vvt(t,\tilde\tau)\|_{H^k}\right|~d|tilde\tau \\
		&\leq \|\vvt(t,t)\|_{H^\ell} + \Delta tC\sup_{\tilde\tau \in [t,\tau]}\|\vvt(t,\tilde\tau)\|_{H^k} \\
		&\leq \|\vvt(t,t)\|_{H^\ell} + \Delta t C(\gamma)\\
			&\leq \|e(t)\|_{H^\ell} + \|u(t)\|_{H^\ell} + \Delta tC(\gamma) \\
		&\leq \Delta t\tilde C(\gamma) + C_2,
	\end{split}
\end{equation*}
by applying \eqref{eq:inv},
we obtain the estimate
\begin{equation}\label{eq:lst}
	\begin{split}
		\|\vvt(t,\tau)\|_{H^\ell} 
		&\leq \|e(t)\|_{H^\ell} + \|u(t)\|_{H^\ell} + \Delta tC(\gamma) \\
		&\leq \Delta tC(\gamma) + C_2.
	\end{split}
\end{equation}
Finally, by fixing $\gamma$ and $\Delta t$ according to
\begin{equation*}
	\gamma = 4C_2, \qquad \Delta t \leq \frac{C_2}{\tilde C(\gamma)}:=\beta,
\end{equation*}
and applying this information to \eqref{eq:lst}, we see that
\begin{equation*}
	\|\vvt(t,\tau)\|_{H^\ell} \leq \frac{\gamma}{2},
\end{equation*}
which concludes the proof.
\end{proof}

\section{Strang splitting (proof of Theorem \ref{thm:strang})}\label{sec:strang-split}
As for the Godunov method, our convergence analysis will require a continuous 
definition of the Strang method. In contrast to the Godunov case, 
we will now introduce three time variables instead of two \cite{Holden:karper}.
We consider the domain:
\begin{equation*}
	\Om_{\Delta t} =  \bigcup_{n=0}^{\lfloor T/\Delta t\rfloor-1}[\frac{t_n}{2}, 
	\frac{t_{n+1}}{2}] \times [t_n, t_{n+1}] \times [\frac{t_n}{2}, \frac{t_{n+1}}{2}].
\end{equation*}
The continuous Strang method is given by the following definition.
\begin{definition}\label{def:strang}
	For $\Delta t > 0$ given, we say that $\vvt$ is the Strang 
	splitting approximation to \eqref{eq:eq}
	whenever $\vvt$ solves
	\begin{equation}\label{eq:strang-2}
		\begin{split}
			\vvt(0,0,0) &= u_0, \\
			\vvt_t(t,t_n,\frac{t_n}{2}) &= B(\vvt(t,t_n,\frac{t_n}{2})), 
			\quad t \in (\frac{t_n}{2}, \frac{t_{n+1}}{2}], \\
			\vvt_\tau(t, \tau,\frac{t_n}{2}) &=  
			A(\vvt(t, \tau,\frac{t_n}{2})), \quad 
			(t,\tau) \in [\frac{t_n}{2}, \frac{t_{n+1}}{2}]\times (t_n,t_{n+1}], \\
			\vvt_\omega(t, \tau, \omega)&= B(\vvt(t,\tau, \omega)), 
			\quad (t, \tau, \omega) \in [\frac{t_n}{2}, \frac{t_{n+1}}{2}]
			\times [t_n,t_{n+1}] \times (\frac{t_n}{2}, \frac{t_{n+1}}{2}).
		\end{split}
	\end{equation}
\end{definition}

In each box $[\frac{t_n}{2}, \frac{t_{n+1}}{2}] 
\times [t_n, t_{n+1}] \times [\frac{t_n}{2}, \frac{t_{n+1}}{2}]$, we will 
mainly consider the function $\vvt$ along the diagonal, i.e., the
function $\vvt(\frac{t}{2},t,\frac{t}{2})$ for $t \in [t_n, t_{n+1}]$. 
Observe that 
each point on this diagonal is a Strang splitting solution for a specific time step.
More precisely, $\vvt(\frac{t}{2},t,\frac{t}{2})$ is a Strang splitting solution with time step 
$t - t_n$. An easy consequence is that
\begin{equation*}
	\vvt (\frac{t_n}{2}, t_n,\frac{t_n}{2}) = u^n, \quad n=0, \ldots, \lfloor T/\Delta t\rfloor,
\end{equation*}
and hence that $\vvt(\frac{t}{2},t,\frac{t}{2})$ can be seen as an extension of $\{u^n\}_n$
to all of $[0,T]$.

To measure the error, we will use the function
\begin{equation*}
	e(t) =  \vvt(\frac{t}{2},t,\frac{t}{2})-u(t),
\end{equation*}
where $u$ is the (smooth) solution of \eqref{eq:eq}.

Theorem \ref{thm:strang} is an immediate 
consequence of the two following lemmas.
\begin{lemma}[Well-posedness]\label{lem:strang}
Let $\vvt$ the Strang splitting solution of \eqref{eq:eq}
in the sense of  Definition \ref{def:strang} and \eqref{eq:strang-2}.
Let $T>0$ be given and assume $u_0 \in H^k$ with $6 \leq k \in \mathbb{N}$.
Then, for $\Delta t>0$ sufficiently small
\begin{enumerate}
	\item The Strang method  
	is well-posed with $\vvt \in C(\Om_{\Delta t};H^k)$. 
	\item The error $e(t)$ satisfies
	$$
	\|e(t)\|_{H^{k-\max\{\alpha,2\}}} = 
	\|\vvt(\frac{t}{2},t,\frac{t}{2}) - u(t)\|_{H^{k-\max\{\alpha,2\}}} \leq tC\Delta t.
	$$
\end{enumerate}
\end{lemma}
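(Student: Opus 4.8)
The proof of Lemma \ref{lem:strang} will mirror the Godunov argument in Section \ref{sec:godunov-split}, but with the three-variable continuous extension of Definition \ref{def:strang}. The strategy is: (i) derive continuous-in-time evolution equations for the error $e(t) = \vvt(\tfrac t2, t, \tfrac t2) - u(t)$, isolating a forcing term $F$ that vanishes on the time-step grid; (ii) prove a ``regularity bootstrap'' analogue of Lemma \ref{lem:higher2}, showing that a low-norm bound $\|\vvt\|_{H^{k-\max\{\alpha,2\}}}\le\gamma$ on all of $\Om_{\Delta t}^{\sigma,\zeta,\eta}$ propagates to an $H^k$ bound; (iii) prove a conditional error estimate $\|e(t)\|_{H^{k-\max\{\alpha,2\}}}\le \tilde C(\gamma)\,t\,\Delta t$ under that regularity hypothesis, using Lemma \ref{lem:comutator}, Lemma \ref{lem:lineardiv}, and Gronwall; and (iv) close the bootstrap via a ``\texttt{lem:thepoint}''-type continuation argument, stepping through the boxes one at a time, using Lemma \ref{lem:local} and Lemma \ref{lem:global} for solvability of the $B$- and $A$-substeps.

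\textbf{Error equations.} First I would fix $(t,\tau,\omega)$ in a box and compute, as in \eqref{eq:err1} and \eqref{eq:err3}, the derivatives of $e$ along the diagonal. Differentiating $\vvt(\tfrac t2,t,\tfrac t2)$ in $t$ brings in $\tfrac12\vvt_t + \vvt_\tau + \tfrac12\vvt_\omega = \tfrac12 B(\vvt) + A(\vvt) + \tfrac12 B(\vvt)$ on the grid points $\tau=t_n$, $\omega=t_n/2$; since $B$ and $A$ are evaluated at slightly different arguments off the grid, Taylor expansion of $B$ around $u$ (using $dB$, $d^2B$ from \eqref{eq:bb}) and linearity of $A$ (\eqref{eq:aa}) will produce an equation of the form $e_t + \Div(e\vc v(e)+u\vc v(e)+e\vc v(u)) - A(e) = F + (\text{quadratic-in-}e)$, exactly as in \eqref{eq:eeq}, together with two transport equations for $F$ in the $\tau$ and $\omega$ directions whose right-hand sides are commutators $[A,B](\vvt)$ (and, because Strang is symmetric, the leading $\Delta t^1$ term in $F$ should actually cancel, leaving $F = O(\Delta t^2)$ — this is what ultimately gives second order in Theorem \ref{thm:strang}, though for Lemma \ref{lem:strang} we only need $F=O(\Delta t)$, which is immediate). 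Since this derivation is carried out in \cite{Holden:karper}, I would state the outcome and refer there, keeping the exposition self-contained only where the divergence-free hypothesis of \cite{Holden:karper} was used and must now be replaced by assumption (2) of Definition \ref{def:operators}.

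\textbf{Bootstrap and closure.} With the error equations in hand, the regularity bootstrap proceeds just as for Lemma \ref{lem:higher2}: along each of the three directions $\vvt$ either solves the linear $A$-flow (which, by (4) of Definition \ref{def:operators}, is $H^\ell$-nonincreasing) or the nonlinear transport $B$-flow (which, by Lemma \ref{lem:nonlineardiv} and Gronwall over an interval of length $\le\Delta t$, grows norms by at most $e^{C\gamma\Delta t}$); iterating the two-step gain $H^\ell\to H^{\ell+2}$ climbs from $H^{k-\max\{\alpha,2\}}$ up to $H^k$. The conditional error estimate (analogue of Lemma \ref{lem:godunoverror}) follows by the same $\Grad^s$-test-and-sum computation on $e_t$ and on $F_\tau$, $F_\omega$, using $F=0$ on the grid to get $\|F\|_{H^\ell}\le C(\gamma)\Delta t$ after integrating over a box edge, then Gronwall in $t$ on $\|e(t)\|_{H^\ell}$; the extra quadratic term $\int\! d^2C(u+\gamma e)[e]^2$ is absorbed since $\|e\|_{H^\ell}$ is already $O(\Delta t)$ small. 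Finally the continuation argument: one shows (the ``\texttt{thepoint}'' lemma) that $\|\vvt\|_{H^\ell}\le\gamma$ on $\Om_{\Delta t}^{\sigma,\zeta,\eta}$ forces $\|\vvt\|_{H^\ell}\le\gamma/2$ there, for $\gamma=4C_2$ and $\Delta t$ small; then, box by box, one uses Lemma \ref{lem:local} to start the first $B$-substep, extends past the strict inequality by continuity and the blow-up criterion \eqref{eq:blowup}, runs the middle $A$-substep via Lemma \ref{lem:global}, and the final $B$-substep again via Lemma \ref{lem:local}, passing the $H^\ell$-bound on $\vvt$ at the corner $(\tfrac{t_{n+1}}2, t_{n+1}, \tfrac{t_{n+1}}2)=u^{n+1}$ as initial data to the next box. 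Combined with Lemma \ref{lem:higher2} this yields $\vvt\in C(\Om_{\Delta t};H^k)$ and hence both assertions of Lemma \ref{lem:strang}.

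\textbf{Main obstacle.} The genuinely delicate point is the bookkeeping caused by having \emph{three} time directions and a composition of two $B$-substeps bracketing one $A$-substep: one must verify that the off-diagonal evaluations (e.g. $B$ at $\vvt(t,\tau,\omega)$ with $\omega\ne t/2$) can be Taylor-expanded against the diagonal with controlled remainders, and that the iterated Gronwall constants stay uniform in $n$ over $[0,T]$ — i.e. that the $e^{C\gamma\Delta t}$ factors multiply up to $e^{C\gamma T}$ rather than blowing up. This is exactly the ingredient that \cite{Holden:karper} supplies for divergence-free $\vc v$; the work here is to rerun it using only boundedness of $\vc v$ and of $\Div\vc v$ on $L^p$ (assumption (2)), which enters the $L^2$-Cauchy estimate for $w^m=u^m-u^{m-1}$ in Lemma \ref{lem:local} and the nonlinear estimate of Lemma \ref{lem:nonlineardiv}. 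I expect no new analytic difficulty beyond that — the rest is a faithful transcription of the Godunov proof with an extra variable.
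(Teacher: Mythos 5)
Your proposal follows essentially the same route as the paper: derive the diagonal error equation with forcing $F$ and its $\tau$- and $\omega$-evolution driven by $[A,B](\vvt)$, prove Strang analogues of Lemma \ref{lem:higher2} and Lemma \ref{lem:godunoverror} to get $\|F\|_{H^{k-\max\{\alpha,2\}}}\le C(\gamma)\Delta t$ and the conditional error bound, and close with a Lemma \ref{lem:thepoint}-type bound ($\gamma\Rightarrow\gamma/2$ for $\gamma=4C_2$, $\Delta t\le\beta$) combined with the box-by-box continuation using Lemmas \ref{lem:local} and \ref{lem:global} and the blow-up criterion, exactly as in Section \ref{sec:godunov-split}. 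This is the paper's argument (Lemmas \ref{lem:higher-strang}, \ref{lem:strangerror}, \ref{lem:thepoint2}), so no further comparison is needed.
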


\begin{lemma}[Convergence]\label{lem:rate}
Let $\vvt$ the Strang splitting solution of \eqref{eq:eq}
in the sense of  Definition \ref{def:strang} and \eqref{eq:strang-2}.
Let $T>0$ be given and assume $u_0 \in H^k$ with $6 \leq k \in \mathbb{N}$.
Then, for $\Delta t>0$ sufficiently small
\begin{equation*}
	\|e(t)\|_{H^{k-3\max\{\alpha,1\} }} \leq C(\Delta t)^2.
\end{equation*}
\end{lemma}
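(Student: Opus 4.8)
The plan is to mimic the Godunov analysis of the previous section, but now exploiting the extra symmetry built into the Strang definition to gain one order. As in the Godunov case, the first step is to derive continuous-in-time error evolution equations. Writing $e(t)=\vvt(\tfrac t2,t,\tfrac t2)-u(t)$ and differentiating along the diagonal, the chain rule gives $\tfrac{d}{dt}\vvt(\tfrac t2,t,\tfrac t2)=\tfrac12\vvt_t+\vvt_\tau+\tfrac12\vvt_\omega$, and using \eqref{eq:strang-2} this equals $\tfrac12 B(\vvt)+A(\vvt)+\tfrac12 B(\vvt^{\,\omega\text{-leg}})$. Combining with the Taylor expansion of $C=A+B$ around $u$ exactly as in \eqref{eq:err1}, one obtains an equation of the form
\begin{equation*}
	e_t + \Div\!\left(e\vc{v}(e)+u\vc{v}(e)+e\vc{v}(u)\right) - A(e) = G + (\text{quadratic remainder in }e),
\end{equation*}
where $G=G(t)$ is a Strang ``defect'' analogous to $F$ in the Godunov case, but now built from mixed second derivatives of $\vvt$ in the three time directions. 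The key point, which I expect to require the bulk of the work, is to show that this defect satisfies $\|G(t)\|_{H^{k-3\max\{\alpha,1\}}}\le C(\Delta t)^2$ rather than merely $O(\Delta t)$. This is where Strang's symmetry pays off: the two half-steps of $B$ straddling the full $A$-step are arranged so that the $O(\Delta t)$ term — a single commutator $[A,B](\vvt)$ — integrates to zero over a symmetric time step, leaving only $O((\Delta t)^2)$ contributions governed by iterated commutators $[A,[A,B]](\vvt)$ and $[B,[A,B]](\vvt)$.

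Concretely, I would introduce the auxiliary ``forcing'' quantities along the other two legs, as in the derivation of \eqref{eq:err3}: set $F(t,\tau,\omega)=\vvt_t-B(\vvt)$ on the $t$-leg and an analogous $\tilde F$ on the $\omega$-leg, compute $F_\tau-A(\vvt)[F]=[A,B](\vvt)$ and its $\omega$-counterpart by the same telescoping calculation as \eqref{eq:err3}, and then differentiate once more to get evolution equations for these commutators whose right-hand sides are double commutators. Integrating the double-commutator equations over a single box (of width $\tfrac12\Delta t$ in $t,\omega$ and $\Delta t$ in $\tau$) and using the symmetric placement $(\tfrac{t_n}{2},t_n,\tfrac{t_n}{2})\mapsto(\tfrac{t_{n+1}}{2},t_{n+1},\tfrac{t_{n+1}}{2})$ one checks that the leading term cancels and the residual is $O((\Delta t)^2)$ per step; summing over the $O(1/\Delta t)$ steps keeps it $O((\Delta t)^2)$. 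For the norm bookkeeping: each commutator with $A$ costs $\max\{\alpha,2\}$ derivatives via Lemma~\ref{lem:comutator}, but the relevant objects here are commutators of $A$ with $B$ (a first-order operator by requirement (2) of Definition~\ref{def:operators}), so a more careful count — tracking that $[A,B]$ loses $\max\{\alpha,1\}$ derivatives and the double commutators lose $\le 2\max\{\alpha,1\}$ beyond that — yields the stated loss of $3\max\{\alpha,1\}$ derivatives. The $H^k$ bounds on $\vvt$ needed to make all these commutator estimates meaningful are already supplied by Lemma~\ref{lem:strang}(1) together with Lemma~\ref{lem:higher2}.

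With the bound $\|G\|_{H^{k-3\max\{\alpha,1\}}}\le C(\Delta t)^2$ in hand, the conclusion follows exactly as in the proof of Lemma~\ref{lem:godunoverror}: apply $\Grad^s$ to the error equation for $s=0,\dots,k-3\max\{\alpha,1\}$, multiply by $\Grad^s e$, integrate by parts, use requirement (4) of Definition~\ref{def:operators} to discard the $A(e)$ term, use Lemma~\ref{lem:lineardiv} on the transport terms together with $u\in C([0,T];H^k)$ (requirement (6)), absorb the quadratic-in-$e$ remainder using the already-established $O(\Delta t)$ bound on $\|e\|$ from Lemma~\ref{lem:strang}(2), and close with Gronwall's inequality. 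The main obstacle, as indicated, is the symmetric-cancellation argument for $G$: one must set up the three-time-variable bookkeeping so that the odd-order term genuinely vanishes, which is delicate because the three legs have different widths and the $B$-legs are nonlinear. Everything downstream is a routine repetition of the Godunov energy estimate with a worse derivative count.
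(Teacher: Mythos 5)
Your plan follows the paper's route closely: the defect $F(t)=F(\tfrac t2,t,\tfrac t2)$ with $F=\tfrac12(\vvt_t-B(\vvt))+\vvt_\tau-A(\vvt)$, the evolution equations $F_\tau-dA(\vvt)[F]=\tfrac12[A,B](\vvt)$ and $F_\omega-dB(\vvt)[F]=-[A,B](\vvt)$, the cancellation of the first-order commutator contributions coming from the weights $(\tfrac12,1,\tfrac12)$ (the paper packages this as Lemma \ref{lem:magic}, $\nabla_tF\cdot(1,2,1)^T=0$ at the corners, followed by the Taylor expansion along the diagonal in Lemma \ref{lem:spoton}), and the closing Gronwall argument copied from Lemma \ref{lem:godunoverror}. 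Your integrate-over-the-box version of the cancellation is a legitimate variant of the paper's Taylor expansion at the corner, so the skeleton is right.

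The genuine gap is in what you claim suffices to control the remainder after the cancellation. You assert that the needed bounds on $\vvt$ ``are already supplied by Lemma \ref{lem:strang}(1) together with Lemma \ref{lem:higher2}'' (i.e.\ its Strang analogue, Lemma \ref{lem:higher-strang}). Those are purely spatial $H^k$ bounds on $\vvt$ itself; they do not control how $[A,B](\vvt)$, or $F$, varies in the three time directions, which is exactly what you must quantify to show that replacing the integrands by their corner values costs only $O((\Delta t)^2)$ (equivalently, to bound the second-order Taylor remainder of $F$ along the diagonal). Off the planes $\omega=\tfrac{t_n}{2}$ and the lines $\tau=t_n$, the defining relations \eqref{eq:strang-2} do not express $\vvt_\tau$ or $\vvt_t$ at all, so bounds on time derivatives of $\vvt$ require a separate argument: this is the paper's Lemma \ref{lem:tempv}, $\|\nabla_t^l\vvt\|_{H^{k-|l|\max\{\alpha,1\}}}\le C$, proved by induction on $|l|$, which then feeds Lemma \ref{lem:doubleF} (a bound on $\nabla_t^2F$ in $H^{k-3\max\{\alpha,1\}}$); this is the bulk of the work for the lemma and is absent from your plan. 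It is also where the exponent $3\max\{\alpha,1\}$ actually comes from — three time derivatives each costing at most $\max\{\alpha,1\}$ spatial derivatives — not from your double-commutator count: Lemma \ref{lem:comutator} only gives a loss of $\max\{\alpha,2\}$ for $[A,B]$, so the claim that ``$[A,B]$ loses $\max\{\alpha,1\}$ derivatives'' is unsupported as stated. A smaller slip: ``$O((\Delta t)^2)$ per step; summing over the $O(1/\Delta t)$ steps keeps it $O((\Delta t)^2)$'' is not correct arithmetic for an additively accumulating residual; what saves you (and what your final paragraph in fact uses) is that the bound $\|F(\tfrac t2,t,\tfrac t2)\|_{H^{k-3\max\{\alpha,1\}}}\le C(\Delta t)^2$ holds pointwise in time and enters the error equation as a source, so Gronwall directly yields $\|e(t)\|_{H^{k-3\max\{\alpha,1\}}}\le Ct(\Delta t)^2$ without any per-step summation.
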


Lemmas \ref{lem:strang} and \ref{lem:rate} will 
be consequences of the results stated and proved in 
the ensuing subsections. 

\subsection{Error evolution equations}
By direct calculation, we see that the error $e$ satisfies 
the time-evolution
\begin{equation*}
	\begin{split}
		e_t - dC(u)[e] 
		&= \frac{\vvt_t}{2} + \vvt_\tau  + \frac{\vvt_\omega}{2} -u_t - dA(u)[e] - dB(u)[e] \\
		&= \frac{\vvt_t}{2} + \vvt_\tau + \frac{1}{2}B(\vvt) - (A+B)(u) - dA(u)[e] - dB(u)[e] \\
		&= \frac{1}{2}\left(\vvt_t - B(\vvt)\right) + \vvt_\tau - A(\vvt) \\
		&\qquad + (A(\vvt) - A(u) - dA(u)[e]) 
		 + (B(\vvt) - B(u) - dB(u)[e])  \\
		&= F(t) + \int_0^1 (1-\gamma)d^2 
		C(u + \gamma e)[e]^2~d\gamma, 
	\end{split}
\end{equation*}
where $F(t) = F(\frac{t}{2}, t, \frac{t}{2})$ and
\begin{equation*}
	F(t,\tau,\omega) = 
	\frac{1}{2}\left(\vvt_t(t,\tau,\omega) 
	- B(\vvt(t,\tau, \omega))\right)
	+\vvt_\tau - A(\vvt(t,\tau,\omega)).
\end{equation*}

Since $\vvt_\tau - A(\vvt(t,\tau,\omega)) = 0$ when $\omega =
\frac{t_n}{2}$, $n=0, \ldots,\lfloor T/\Delta t\rfloor-1$, we 
can apply the arguments of \eqref{eq:err3} to obtain 
\begin{equation*}
	F_\tau - dA(\vvt)[F] = \frac{1}{2}[A,B](\vvt), 
	\quad (t,\sigma,\frac{t_n}{2}) \in \Om_{\Delta t}
\end{equation*}
where as before
\begin{equation*}
	[A,B](f) = dA(f)[B(f)] - dB(f)[A(f)].
\end{equation*}
We also derive the following equation for 
the evolution of $F$ in $\omega$:
\begin{equation*}
	\begin{split}
		&F_\omega  - dB(\vvt)[F] \\
		&\qquad= \frac{1}{2}\vvt_{t\omega} - \frac{1}{2}B(\vvt)_\omega 
		- \frac{1}{2}dB(\vvt)[\vvt_t - B(\vvt)] \\
		&\qquad \qquad + \vvt_{\tau \omega} - dA(\vvt)[\vvt_\omega] 
		- dB(\vvt)[\vvt_\tau - A(\vvt)]\\
		&\qquad= \frac{1}{2}B(\vvt)_t - \frac{1}{2}dB(\vvt)[\vvt_\omega] 
		- \frac{1}{2}dB(\vvt)[\vvt_t] +  \frac{1}{2}dB(\vvt)[B(\vvt)] \\
		&\qquad \qquad + dB(\vvt)[\vvt_\tau] - dA(\vvt)[B(\vvt)] - dB(\vvt)[\vvt_\tau] 
		+ dB(\vvt)[A(\vvt)] \\
		&\qquad= \frac{1}{2}\left(dB(\vvt)[\vvt_t] - dB(\vvt)[B(\vvt)]- dB(\vvt)[\vvt_t] 
		+  dB(\vvt)[B(\vvt)]\right) \\
		&\qquad \qquad + dB(\vvt)[A(\vvt)] - dA(\vvt)[B(\vvt)].
	\end{split}
\end{equation*}
Thus, recalling the definition of $[\dott,\dott]$, we find 
\begin{equation*}
	F_\omega  - dB(\vvt)[F] = [B,A](\vvt).
\end{equation*}
For the class of equations \eqref{eq:eq}, the 
evolution equations read:
\begin{align}
	e_t + \Div \left(e \vc{v}(e) + u\vc{v}(e) +e\vc{v}(u) \right) - A(e)&= F, 
	\quad t \in (0,T)\label{eq:eeqS}\\
	F_\tau - A(F)  &= \frac{1}{2}[A,B](\vvt), 
	\quad (t,\tau, \frac{t_n}{2}) \in \Om_{\Delta t} \label{eq:feqS}, \\
	F_\omega + \Div\left(\vvt \vc{v}(F) + F\vc{v}(\vvt)\right)  & 
	= -[A,B](\vvt), \quad (t,\tau, \omega) \in \Om_{\Delta t} \label{eq:feq2S}.
\end{align} 

As for the Godunov method, it will be convenient to define a notation 
for all times prior to a given time $( \sigma, \zeta,\nu) \in \Om_{\Delta t}$:
\begin{equation*}
	\Om_{\Delta t}^{\sigma, \tau,\nu} = \left\{(t,\tau, \omega) 
	\in \Om_{\Delta t}\mid 0\leq t\leq \sigma,~0 
	\leq \tau \leq \zeta, ~0 \leq \omega\leq \nu\right\}.
\end{equation*}

To prove Lemma \ref{lem:strang}, we will need Strang versions
of Lemmas \ref{lem:higher2} and \ref{lem:godunoverror}.
Clearly, there is no problem to extend Lemma \ref{lem:higher2}
to conclude the following result.

\begin{lemma}\label{lem:higher-strang}
	Let $\vvt(0,0) := u_0 \in H^{k}$.
	Assume the existence of a time 
	$(\sigma, \tau,\nu) \in [0,T]^3$ and a 
	finite constant $\gamma > 0$ such that
	$$
	\|\vvt(t,\tau, \omega)\|_{H^{k-\max\{\alpha,2\}}} \leq \gamma, 
	\quad  (t,\tau, \omega) \in \Om_{\Delta t}^{\sigma, \zeta, \nu}.
	$$
	Then there is a constant $C(\gamma)$ 
	determined by $\gamma$, $u_0$, and $T$ such that
	$$
	\|\vvt(t,\tau, \omega)\|_{H^{k}} \leq C(\gamma), \quad  (t,\tau, \omega) 
	\in \Om_{\Delta t}^{\sigma, \zeta, \nu}.
	$$
\end{lemma}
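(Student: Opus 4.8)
The plan is to reduce Lemma~\ref{lem:higher-strang} to Lemma~\ref{lem:higher2} by exploiting the fact that the Strang domain $\Om_{\Delta t}$ is, locally in each box, just two copies of the Godunov configuration glued along the $B$-evolution in $\omega$. Concretely, fix $(t,\tau,\omega) \in \Om_{\Delta t}^{\sigma,\zeta,\nu}$ and let $n$ be such that $t,\omega \in [\tfrac{t_n}{2},\tfrac{t_{n+1}}{2}]$ and $\tau \in [t_n,t_{n+1}]$. First I would record the elementary monotonicity fact coming from requirement~(4) of Definition~\ref{def:operators}: along the $\tau$-direction the map $\tau \mapsto A(\vvt)$ is applied, so $\tfrac12\partial_\tau\|\vvt\|_{H^\ell}^2 = \sum_{s\le\ell}\int A(\Grad^s\vvt):\Grad^s\vvt\,dx \le 0$ for every $\ell \le k$; hence
\begin{equation*}
	\|\vvt(t,\tau,\omega)\|_{H^k} \le \|\vvt(t,t_n,\omega)\|_{H^k}.
\end{equation*}
This kills the middle variable and leaves us to control $\|\vvt(t,t_n,\omega)\|_{H^k}$, which evolves only through the two $B$-steps (in $t$, then in $\omega$).

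Next I would run the Godunov-style energy argument of Lemma~\ref{lem:higher2} separately in the $t$-variable and in the $\omega$-variable. For the $\omega$-direction, using $\vvt_\omega = B(\vvt) = -\Div(\vvt\,\vc{v}(\vvt))$ and Lemma~\ref{lem:nonlineardiv}, one gets $\partial_\omega\tfrac12\|\vvt\|_{H^\ell}^2 \le C\|\vvt\|_{H^{\ell-2}}\|\vvt\|_{H^\ell}^2$, and likewise in the $t$-direction from the second line of \eqref{eq:strang-2}. Thus whenever $\ell-2 \le k-\max\{2,\alpha\}$, a Gronwall estimate over an $\omega$-interval of length $\le \tfrac12\Delta t$ gives $\|\vvt(t,t_n,\omega)\|_{H^\ell} \le \|\vvt(t,t_n,\tfrac{t_n}{2})\|_{H^\ell}e^{C\gamma\Delta t}$, and a further Gronwall over a $t$-interval of length $\le\tfrac12\Delta t$ gives $\|\vvt(t,t_n,\tfrac{t_n}{2})\|_{H^\ell} \le \|\vvt(\tfrac{t_n}{2},t_n,\tfrac{t_n}{2})\|_{H^\ell}e^{C\gamma\Delta t} = \|u^n\|_{H^\ell}e^{C\gamma\Delta t}$. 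Chaining these across all earlier time steps (each contributing a factor $e^{C\gamma\Delta t}$, so the product telescopes to $e^{C\gamma T}$) yields
\begin{equation*}
	\|\vvt(t,\tau,\omega)\|_{H^\ell} \le \|u_0\|_{H^\ell}e^{C\gamma T}, \quad 4 \le \ell \le k-\max\{2,\alpha\}.
\end{equation*}

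Finally, I would bootstrap exactly as in the proof of Lemma~\ref{lem:higher2}: replace $\gamma$ by the new bound $C(\gamma) = \|u_0\|_{H^\ell}e^{C\gamma T}$ and repeat the estimate to gain two more derivatives, iterating until $\ell+2m = k$ (if $\alpha$ is even) or $\ell+2m = k-1$ (if $\alpha$ is odd, using the embedding $H^{k-2}\hookrightarrow H^{k-1}$ to cover $\ell=k$ at the last step). This produces the claimed constant $C(\gamma)$ depending only on $\gamma$, $u_0$, and $T$. The only point requiring a little care — and the reason the statement is still nontrivial rather than an immediate corollary — is that the error/forcing terms $F$ do not enter this lemma at all: we are estimating $\vvt$ itself, whose evolution in all three variables is governed purely by the $A$ and $B$ operators, so the $\omega$-step is structurally identical to the $t$-step and the monotonicity of $A$ handles $\tau$ for free. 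Thus "there is no problem to extend Lemma~\ref{lem:higher2}," as claimed, and the proof is a transcription with one extra Gronwall factor per box.
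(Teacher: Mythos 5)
Your overall strategy---eliminate one time direction with the dissipativity of $A$, run the Godunov energy/Gronwall argument of Lemma \ref{lem:higher2} in the two $B$-directions, and bootstrap two derivatives at a time---is exactly the extension the paper has in mind (it offers no separate proof, asserting the Godunov argument carries over). But your first step is not justified as written. You apply $\tfrac12\partial_\tau\|\vvt(t,\tau,\omega)\|_{H^\ell}^2=\sum_{s\le\ell}\int_{\R^N} A(\Grad^s\vvt):\Grad^s\vvt\,dx\le 0$ at an arbitrary point of the box, but Definition \ref{def:strang} imposes $\vvt_\tau=A(\vvt)$ only on the plane $\omega=\tfrac{t_n}{2}$. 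Off that plane, $\vvt(t,\tau,\omega)$ is obtained by flowing $\vvt(t,\tau,\tfrac{t_n}{2})$ with $B$ in the $\omega$-direction, so $\vvt_\tau(t,\tau,\omega)$ is the linearized $B$-flow applied to $A(\vvt(t,\tau,\tfrac{t_n}{2}))$, which differs from $A(\vvt(t,\tau,\omega))$ precisely by commutator contributions (that $[A,B]\neq 0$ is the whole point of the error analysis). Hence the inequality $\|\vvt(t,\tau,\omega)\|_{H^k}\le\|\vvt(t,t_n,\omega)\|_{H^k}$ does not follow. This is the one place where the Strang geometry genuinely differs from Godunov: in the Godunov rectangle the $A$-evolution is the outermost step and holds throughout, whereas in the Strang box the outermost step is the second $B$-evolution in $\omega$, and the $A$-monotonicity is only available on the plane $\omega=\tfrac{t_n}{2}$. (Your later steps are fine: the $\omega$-Gronwall at $\tau=t_n$ uses an equation valid in the whole box, and your $t$-Gronwall is taken on the line $\tau=t_n$, $\omega=\tfrac{t_n}{2}$, where the $t$-equation does hold.)

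The repair is a reordering, not a new idea: proceed from outermost to innermost direction. First do the $\omega$-Gronwall at the given $(t,\tau)$ (the equation $\vvt_\omega=B(\vvt)$ holds throughout the box and Lemma \ref{lem:nonlineardiv} applies), giving $\|\vvt(t,\tau,\omega)\|_{H^\ell}\le\|\vvt(t,\tau,\tfrac{t_n}{2})\|_{H^\ell}e^{C\gamma\Delta t}$ whenever $\ell-2\le k-\max\{\alpha,2\}$; then use the $A$-monotonicity in $\tau$ on the plane $\omega=\tfrac{t_n}{2}$, where it is legitimate, to get $\|\vvt(t,\tau,\tfrac{t_n}{2})\|_{H^\ell}\le\|\vvt(t,t_n,\tfrac{t_n}{2})\|_{H^\ell}$; finally run the $t$-Gronwall along the line $\tau=t_n$, $\omega=\tfrac{t_n}{2}$ to reach $\|u^n\|_{H^\ell}e^{C\gamma\Delta t}$. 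Chaining these bounds over the preceding boxes and bootstrapping in $\ell$ exactly as in Lemma \ref{lem:higher2} then goes through as you describe, and with this single correction your argument is sound and coincides with the paper's intended proof.
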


Let us now prove the Strang version of Lemma \ref{lem:godunoverror}.
\begin{lemma}\label{lem:strangerror}
	Assume the existence of  $(\sigma, \zeta, \nu) \in \Om_{\Delta t}$ such 
	that the splitting solution 
	$$
	\|\vvt(t,\tau, \omega)\|_{H^k} \leq C(\gamma), \quad  (t,\tau,\omega) 
	\in \Om_{\Delta t}^{\sigma, \zeta, \nu}.
	$$
	Then,
	\begin{equation*}
		\|e(t)\|_{H^{k-\max\{\alpha,2\}}} \leq \Delta t \tilde{C}(\gamma), 
		\quad t \leq \sigma.
	\end{equation*}
\end{lemma}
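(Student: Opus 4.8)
The plan is to mirror the proof of Lemma~\ref{lem:godunoverror} step by step, carrying along the extra $\omega$-evolution equation \eqref{eq:feq2S} for the forcing term. First I would estimate $F$ on the sub-box $\Om_{\Delta t}^{\sigma,\zeta,\nu}$ in two stages. Along the slice $\omega = t_n/2$, apply $\Grad^s$ to \eqref{eq:feqS}, multiply by $\Grad^s F$, integrate, sum over $s=0,\dots,\ell$ with $\ell = k-\max\{\alpha,2\}$; requirement~(4) in Definition~\ref{def:operators} kills the $A(\Grad^s F)$ term, Lemma~\ref{lem:comutator} controls the commutator by $C\|\vvt\|_{H^k}^2 \le C(\gamma)$, and since $F(t,t_n,t_n/2)=0$ integration in $\tau$ over an interval of length $\le \Delta t$ gives $\|F(t,\tau,t_n/2)\|_{H^\ell} \le C(\gamma)\Delta t$. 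Then, starting from this bound at $\omega = t_n/2$, integrate \eqref{eq:feq2S} in $\omega$: test with $\Grad^s F$, use Lemma~\ref{lem:lineardiv} (the $\Div(\vvt\,\vc v(F))$ and $\Div(F\,\vc v(\vvt))$ terms are exactly of the form covered there) together with Lemma~\ref{lem:comutator} for the right-hand side, to obtain $\partial_\omega \|F\|_{H^\ell} \le C(\gamma)\|F\|_{H^\ell} + C(\gamma)$, and Gronwall over an $\omega$-interval of length $\le \Delta t/2$ yields $\|F(t,\tau,\omega)\|_{H^\ell}\le C(\gamma)\Delta t$ on all of $\Om_{\Delta t}^{\sigma,\zeta,\nu}$; in particular $\|F(t)\|_{H^\ell}=\|F(\tfrac t2,t,\tfrac t2)\|_{H^\ell}\le C(\gamma)\Delta t$.

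With the forcing bound in hand, the error step is identical to the Godunov case. Apply $\Grad^s$ to \eqref{eq:eeqS}, multiply by $\Grad^s e$, integrate by parts, sum over $s=0,\dots,\ell$; requirement~(4) handles $A(\Grad^s e)$, Lemma~\ref{lem:lineardiv} together with H\"older's inequality controls the divergence terms $\Div(e\,\vc v(e) + u\,\vc v(e) + e\,\vc v(u))$ by $C\|u\|_{H^{\ell+1}}\|e\|_{H^\ell}^2$ (using $\|\vvt\|_{H^k}\le C(\gamma)$ hence $\|e\|_{H^\ell}$ is a priori finite, and $u\in C([0,T];H^k)$ by requirement~(6)), and the $F$ term contributes $C\|F\|_{H^\ell}\|e\|_{H^\ell}$. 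This gives
\begin{equation*}
	\partial_t \|e(t)\|_{H^\ell} \le C\|F(t)\|_{H^\ell} + C\|e(t)\|_{H^\ell}
	\le C\bigl(\Delta t + \|e(t)\|_{H^\ell}\bigr),
\end{equation*}
and since $e(0)=0$ the Gronwall inequality closes the estimate with a constant $\tilde C(\gamma)$ depending on $\gamma$, $u_0$, and $T$.

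The main obstacle — and the only genuine departure from the Godunov argument — is the two-stage estimate of $F$: one must first propagate the bound in $\tau$ along the slice $\omega = t_n/2$ where the clean identity \eqref{eq:feqS} holds, and only then propagate it in $\omega$ using the quasilinear equation \eqref{eq:feq2S}, whose right-hand side $-[A,B](\vvt)$ again calls on Lemma~\ref{lem:comutator} and whose transport-type left-hand side requires Lemma~\ref{lem:lineardiv} rather than merely requirement~(4). One should also check that the $\omega$-integration interval length is bounded by $\Delta t/2$ (from the box structure $[\tfrac{t_n}{2},\tfrac{t_{n+1}}{2}]$), so that the Gronwall factor $e^{C(\gamma)\Delta t}$ is uniformly bounded and the net contribution stays $O(\Delta t)$. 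Everything else is a routine transcription of the Godunov proof with $(t,\tau)$ replaced by $(t,\tau,\omega)$ and $\Om_{\Delta t}^{\sigma,\zeta}$ replaced by $\Om_{\Delta t}^{\sigma,\zeta,\nu}$.
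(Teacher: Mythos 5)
Your proposal is correct and follows essentially the same route as the paper: bound $F$ first along the slice $\omega=t_n/2$ via \eqref{eq:feqS} exactly as in the Godunov case, then propagate in $\omega$ via \eqref{eq:feq2S} using Lemmas \ref{lem:lineardiv} and \ref{lem:comutator}, and finally rerun the Godunov error estimate on \eqref{eq:eeqS}. The only cosmetic difference is that you invoke Gronwall in the $\omega$ variable where the paper simply integrates over the interval of length $O(\Delta t)$, which yields the same $O(\Delta t)$ bound.
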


\begin{proof}
Let $\ell = k-\max\{\alpha,2\}$.
As in the proof of Lemma \ref{lem:godunoverror}, we first 
estimate the size of the source term $F$.
For this purpose, fix any $(t,\tau, \omega) \in 
\Om^{\sigma, \zeta, \nu}_{\Delta t}$ and 
let $n$ be such that $(t,\tau,\omega) \in \left[\frac{t_n}{2}, 
\frac{t_{n+1}}{2}\right] \times \left[t_{n}, t_{n+1}\right]
\times \left[\frac{t_n}{2}, \frac{t_{n+1}}{2}\right]$.

Now, in the plane given by $\omega = \frac{t_n}{2}$, we have that $F$ satisfies 
the equation \eqref{eq:feqS}. Since this equation is similar to \eqref{eq:feq}, 
we can repeat the arguments in the 
proof of Lemma \ref{lem:godunoverror} to conclude that
\begin{equation}\label{eq:SF}
	\|F(t,\sigma,\frac{t_n}{2})\|_{H^{\ell}} \leq \Delta t C(\gamma).
\end{equation}
Hence, to estimate $F$ at the point $(t,\tau,\omega)$, we can integrate from $(t,\tau,\omega)$
in the $\omega$ direction to the plane given by $\omega= \frac{t_n}{2}$ 
and apply \eqref{eq:SF}. To achieve this, we first apply $\Grad^s$ to \eqref{eq:feq2S},
multiply with $\Grad^s F$, integrate, and sum over $s=0, \ldots, k$, to obtain
\begin{equation*}
	\begin{split}
		\partial_\omega \frac{1}{2}\|F(t,\tau,\omega)\|_{H^{\ell}}^2
		&= -\sum_{s=0}^\ell\int_{\R^N}\Grad^s\Div \left(\vvt \vc{v}(F) 
		+ F\vc{v}(\vvt)\right):\Grad^s F~  dx \\
		&\qquad - \sum_{s=0}^\ell\int_{\R^N}\Grad^s[A,B](\vvt):\Grad^s F~dx \\
		&\leq C\left(\|\vvt\|_{H^{\ell+1}}\|F\|_{H^\ell}^2 
		+ \|\vvt\|_{H^\ell}\|F\|_{H^\ell}^2\right) + C\|\vvt\|_{H^k}^2\|F\|_{H^\ell}.
	\end{split}
\end{equation*}
In the last inequality we have applied Lemmas \ref{lem:lineardiv} and \ref{lem:comutator}.
Since $\vvt \in H^{k}$, we can integrate the last inequality to obtain
\begin{equation*}
	\|F(t,\tau,\omega)\|_{H^\ell} \leq \|F(t,\sigma,\frac{t_n}{2})\|_{H^{\ell}} 
	+ C(\gamma)\Delta t \leq \Delta tC(\gamma),
\end{equation*}
where the last inequality is \eqref{eq:SF}.

Since \eqref{eq:eeqS} is identical to \eqref{eq:eeq}, we can repeat the arguments in 
the proof of Lemma \ref{lem:godunoverror} to conclude the proof.
\end{proof}

\subsection{Proof of well-posedness (Lemma \ref{lem:strang})}
We can apply  
similar arguments as those of Section \ref{sec:godunov}
(for the  Godnov method) to prove Lemma \ref{lem:strang}. In particular, 
upon inspection of the arguments in Section \ref{sec:godunov}, it 
is clear that Lemma \ref{lem:strang} is a consequence of
the following result.

\begin{lemma}\label{lem:thepoint2}
	Let $k\geq 6$ and $\vvt(0,0,0):=u_0 \in H^k$.
	There exists constants $\beta$ and $\gamma$, depending only 
	on $T$ and $u_0$, such that if $\Delta t < \beta$ and
	$$
	\|\vvt(t,\tau, \omega)\|_{H^{k-\max\{\alpha,2\}}} \leq \gamma, 
	\quad  (t,\tau,\omega) \in \Om_{\Delta t}^{\sigma, \zeta, \nu},
	$$
	for some $(\sigma, \zeta, \nu) \in \Om_{\Delta t}$, then
	\begin{equation*}
	 	\|\vvt(t,\tau, \omega)\|_{H^{k-\max\{\alpha,2\}}} \leq \frac{\gamma}{2}, 
		\quad  (t,\tau,\omega) \in \Om_{\Delta t}^{\sigma, \zeta, \nu}.
	\end{equation*}
\end{lemma}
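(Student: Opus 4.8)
The plan is to mirror the bootstrap argument used for the Godunov method in the proof of Lemma \ref{lem:thepoint}, now carried out on the three-variable domain $\Om_{\Delta t}$. The key observation is that, exactly as in the Godunov case, the hypothesis $\|\vvt(t,\tau,\omega)\|_{H^\ell}\le\gamma$ on $\Om_{\Delta t}^{\sigma,\zeta,\nu}$ (where $\ell=k-\max\{\alpha,2\}$) triggers Lemma \ref{lem:higher-strang}, which upgrades it to $\|\vvt(t,\tau,\omega)\|_{H^k}\le C(\gamma)$, and then Lemma \ref{lem:strangerror} gives $\|e(t)\|_{H^\ell}\le\Delta t\,\tilde C(\gamma)$ for $t\le\sigma$. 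The goal is to control $\|\vvt(t,\tau,\omega)\|_{H^\ell}$ by $\|u(t)\|_{H^\ell}+\|e(t)\|_{H^\ell}$ plus terms of size $O(\Delta t)$, then choose $\gamma$ large relative to $\sup_{[0,T]}\|u\|_{H^\ell}$ (which is finite by requirement (6) of Definition \ref{def:operators}) and $\Delta t$ small relative to $\tilde C(\gamma)$ to close the loop at $\gamma/2$.

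The concrete steps are as follows. First I would fix $(t,\tau,\omega)\in\Om_{\Delta t}^{\sigma,\zeta,\nu}$ and let $n$ be the index with $(t,\tau,\omega)$ in the box $[\tfrac{t_n}{2},\tfrac{t_{n+1}}{2}]\times[t_n,t_{n+1}]\times[\tfrac{t_n}{2},\tfrac{t_{n+1}}{2}]$. I travel from the diagonal point $(\tfrac t2,t,\tfrac t2)$ to $(t,\tau,\omega)$ in three legs, estimating the $H^\ell$-norm of $\vvt$ along each. Along the $\omega$-leg and the $t$-leg, $\vvt$ solves the nonlinear transport equation, so differentiating $\tfrac12\|\vvt\|_{H^\ell}^2$, applying $\Grad^s$, integrating, summing over $s\le\ell$, and invoking Lemma \ref{lem:nonlineardiv} gives $|\partial\|\vvt\|_{H^\ell}|\le C\|\vvt\|_{H^{\ell-2}}\|\vvt\|_{H^\ell}\le C(\gamma)$ after using the already-established $H^k$ bound from Lemma \ref{lem:higher-strang}; since each leg has length $\le\Delta t$, each contributes at most $C(\gamma)\Delta t$. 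Along the $\tau$-leg, $\vvt$ solves $\vvt_\tau=A(\vvt)$; the energy estimate uses requirement (4) of Definition \ref{def:operators} for the sign, and the bound $\|A(\vvt)\|_{H^\ell}\le C\|\vvt\|_{H^{\ell+\alpha}}\le C\|\vvt\|_{H^k}$ from requirement (3), again yielding $|\partial_\tau\|\vvt\|_{H^\ell}|\le C\|\vvt\|_{H^k}\le C(\gamma)$, so this leg also contributes $\le C(\gamma)\Delta t$. Summing,
\begin{equation*}
	\|\vvt(t,\tau,\omega)\|_{H^\ell} \le \|\vvt(\tfrac t2,t,\tfrac t2)\|_{H^\ell} + C(\gamma)\Delta t
	\le \|e(t)\|_{H^\ell} + \|u(t)\|_{H^\ell} + C(\gamma)\Delta t
	\le \Delta t\,\tilde C(\gamma) + C_2,
\end{equation*}
where $C_2=\sup_{[0,T]}\|u\|_{H^\ell}$ and the last step uses Lemma \ref{lem:strangerror} (valid because the $H^\ell$-hypothesis and Lemma \ref{lem:higher-strang} put us in its hypotheses). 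Finally, set $\gamma=4C_2$ and $\Delta t\le\beta:=C_2/\tilde C(\gamma)$; then the right side is $\le 2C_2=\gamma/2$, which is the desired conclusion.

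The main subtlety, as in the Godunov case, is not any single estimate but the circularity: Lemmas \ref{lem:higher-strang} and \ref{lem:strangerror} both take the $H^\ell$-bound on $\vvt$ over the \emph{whole} sub-domain $\Om_{\Delta t}^{\sigma,\zeta,\nu}$ as a hypothesis, and this bound is precisely what we are trying to prove. Lemma \ref{lem:thepoint2} is stated so as to be consistent with a continuous-induction (open–closed) argument: the bound-with-$\gamma$ on a sub-domain is improved to bound-with-$\gamma/2$ on the same sub-domain, so by continuity of $(t,\tau,\omega)\mapsto\|\vvt(t,\tau,\omega)\|_{H^\ell}$ the set of sub-domains on which the $\gamma$-bound holds is both open and closed, hence (starting from the initial data, using Lemmas \ref{lem:local} and \ref{lem:global} for short-time well-posedness of the $B$- and $A$-steps exactly as in Section \ref{sec:godunov}) it is all of $\Om_{\Delta t}$. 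That bootstrap packaging is already carried out in Section \ref{sec:godunov} for Godunov and transfers verbatim; here we only need the one-step improvement, whose proof is the displayed chain above. One should double-check that $k\ge 6$ leaves enough room: $\ell=k-\max\{\alpha,2\}$ and Lemma \ref{lem:nonlineardiv} needs its argument in $H^\ell$ with $\ell\ge 6$ only when we apply it at the top level $\ell=k$ inside Lemma \ref{lem:higher-strang}; at the level $\ell$ here we only need $\ell-2\le k-\max\{\alpha,2\}$, which holds, and the transport-leg estimate closes because $\|\vvt\|_{H^{\ell-2}}\le\|\vvt\|_{H^k}\le C(\gamma)$. No new difficulty beyond what the Godunov argument already dispatched.
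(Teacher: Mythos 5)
Your proposal is correct and follows essentially the same route as the paper: the hypothesis is upgraded via Lemma \ref{lem:higher-strang} and Lemma \ref{lem:strangerror}, the $H^{k-\max\{\alpha,2\}}$ norm at an arbitrary point is compared to its value on the diagonal by integrating along the coordinate directions (each leg of length $O(\Delta t)$, controlled by Lemma \ref{lem:nonlineardiv} for the $B$-legs and requirements (3)--(4) for the $A$-leg), the diagonal value is split as $\|e\|+\|u\|$, and the same choices $\gamma=4C_2$, $\beta=C_2/\tilde C(\gamma)$ close the argument. The only cosmetic difference is that you use three legs from a diagonal point, whereas the paper picks the diagonal point $(t,2t,t)$ sharing the first coordinate and needs only an $\omega$-leg and a $\tau$-leg.
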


\begin{proof}
Let $\ell = k - \max\{\alpha,2\}$ and assume that $(t,\tau,\omega) 
\in \Om_{\Delta t}^{\sigma, \zeta, \nu}$
is such that 
\begin{equation*}
	\|\vvt(t,\tau, \omega)\|_{H^{\ell}} \leq \gamma, \quad  
	(t,\tau,\omega) \in \Om_{\Delta t}^{\sigma, \zeta, \nu},
\end{equation*}
where the values of $\gamma$ and $\Delta t$ are to be determined. 
Lemma \ref{lem:higher-strang} can then be applied and yields
\begin{equation*}
	\|\vvt(t,\tau,\omega)\|_{H^{k}} \leq C(\gamma),\quad \forall (t,\tau,\omega) 
	\in \Om_{\Delta t}^{\sigma, \zeta, o}.
\end{equation*}
Furthermore, we can apply Lemma \ref{lem:strangerror} to conclude the estimate
\begin{equation*}\label{eq:inv2}
	\|e(t)\|_{H^\ell} \leq \Delta t \tilde C(\gamma). 
\end{equation*}

Now, to proceed we fix any $(t, \tau, \omega) \in \Omega_{\Delta t}^{\sigma, \zeta, \nu}$.
Using the definition of the Strang method (Definition \ref{def:strang}) and 
Lemma \ref{lem:nonlineardiv}, we see that
\begin{equation*}
	\begin{split}
		\left|  \partial_\omega \frac{1}{2}\|\vvt(t,\tau,\omega)\|_{H^\ell}^2\right|
		& = \left|\sum_{s=0}^\ell \int_{\R^N}\Grad^s \vvt_\omega : \Grad^s\vvt~dx  \right| \\
		& =\left|-\sum_{s=0}^\ell \int_{\R^N}
		\Grad^s\Div \left(\vvt \vc{v}(\vvt) \right):\Grad^s \vvt ~dx  \right| \\
		&\leq	 C\|\vvt\|_{H^{\ell-2}} \|\vvt\|_{H^\ell}^2
		\leq C\|\vvt\|_{H^\ell}^3 \leq C\gamma^3. 
	\end{split}
\end{equation*}
From this it easily follows that
\begin{equation}\label{eq:Xtra}
	\|\vvt(t,\tau, \omega)\|_{H^\ell} \leq \|\vvt(t,\tau, t)\|_{H^\ell} + \Delta t C(\gamma).
\end{equation}
By the same calculations as those in the proof of Lemma \ref{lem:thepoint}, we also deduce
\begin{equation*}
	\begin{split}
		 \|\vvt(t,\tau, t)\|_{H^\ell} &\leq \|\vvt(t,2t, t)\|_{H^\ell} + \Delta t C(\gamma) \\
		&\leq \|e(2t)\|_{H^\ell} + \|u(2t)\|_{H^\ell} + \Delta t C(\gamma) \\
		&\leq C(\gamma) \Delta t + C_2.
	\end{split}
\end{equation*}
Using this in \eqref{eq:Xtra} allow us to conclude
\begin{equation}\label{eq:soon}
	\|\vvt(t,\tau, \omega)\|_{H^\ell} \leq C(\gamma) \Delta t + C_2.
\end{equation}
Since $(t, \tau, \omega)$ was  arbitrary, we can conclude that 
\eqref{eq:soon} holds for all $(t, \tau, \omega) \in \Om_{\Delta t}^{\sigma, \zeta, \nu}$.

Finally, we fix $\gamma$ and $\Delta t$ according to
\begin{equation*}
	\gamma = 4C_2, \qquad \Delta t\leq \frac{C_2}{C(\gamma)}:=\beta,
\end{equation*}
in \eqref{eq:soon} to obtain
\begin{equation*}
	\|\vvt(t,\tau, \omega)\|_{H^\ell} \leq \frac{\gamma}{2}, 
	\quad  (t, \tau, \omega) \in \Om_{\Delta t}^{\sigma, \zeta, \nu}.
\end{equation*}
\end{proof}

Equipped with the previous lemma, Lemma \ref{lem:strang} can be proved as 
we did with Lemma \ref{lem:godunov} in the Godunov case.

\subsection{Temporal regularity}
To prove second-order convergence (Lemma \ref{lem:rate}), we will 
need regularity in time of our splitting solution. 
Since we are working with three time variables, it will be convenient to
recall the notation
\begin{equation*}
  \nabla_t^l f= \frac{\partial^{\abs{l}}f}{\partial
    t^{l_1}\partial \tau^{l_2} \partial \omega^{l_3}}, \qquad \abs{l}=l_1+l_2+l_3,
\end{equation*}
for a multi-index $l=(l_1,l_2,l_3)$. We will also use the  notation
\begin{equation*}
  \nabla_t^k f= \left\{ \frac{\partial^{\abs{l}}f}{\partial
    t^{l_1}\partial \tau^{l_2} \partial \omega^{l_3}}\mid   \abs{l}=k\right\}
\end{equation*}
for any natural number $k$
\begin{lemma}\label{lem:tempv}
Let $\vvt$ be the Strang splitting solution in 
the sense of Definition \ref{def:strang}.
If $u_0 \in H^{k}$, then
\begin{equation*}
	\|\nabla^l_{ t} \vvt(t,\tau,\omega)\|_{H^{k-|l|\max\{\alpha,1\} }} \leq C\,,
\quad  (t,\tau,\omega)\in\Om_{\Delta t}.
\end{equation*}
\end{lemma}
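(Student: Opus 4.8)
The plan is to prove the estimate by induction on $|l|$, differentiating the defining equations \eqref{eq:strang-2} repeatedly in the time variables and using the already-established spatial bound $\|\vvt\|_{H^k}\le C(\gamma)$ (Lemma \ref{lem:higher-strang}, valid on all of $\Om_{\Delta t}$ by the well-posedness argument). For the base case $|l|=1$ we read off $\vvt_t = B(\vvt)$, $\vvt_\tau = A(\vvt)$, $\vvt_\omega = B(\vvt)$ directly from \eqref{eq:strang-2}; since $B$ costs one spatial derivative and $A$ costs $\alpha$, we get $\|\vvt_t\|_{H^{k-1}}, \|\vvt_\omega\|_{H^{k-1}} \le C\|\vvt\|_{H^k}^2 \le C$ and $\|\vvt_\tau\|_{H^{k-\alpha}} \le C\|\vvt\|_{H^k}\le C$, which is contained in the claimed bound with loss $|l|\max\{\alpha,1\} = \max\{\alpha,1\}$.

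For the inductive step, suppose the bound holds for all multi-indices of order $\le m$. A mixed derivative $\nabla_t^l\vvt$ with $|l|=m+1$ is obtained by applying $m$ further time-derivatives to one of $B(\vvt)$, $A(\vvt)$. For the $A$-slot this is clean: $A$ is linear, so $\partial^m A(\vvt) = A(\partial^m \vvt)$ by commuting derivatives, costing $\alpha$ spatial derivatives on top of the order-$m$ bound. For the $B$-slot one uses the chain rule on $B(f) = -\Div(f\,\vc v(f))$: each time-derivative of $B(\vvt)$ produces terms of the form $-\Div(\nabla_t^a\vvt\, \vc v(\nabla_t^b\vvt))$ with $a+b = $ (number of derivatives applied), which by Leibniz and the boundedness of $\vc v$ on Sobolev spaces (Definition \ref{def:operators}(2), together with the $\Div\vc v$ bound when $N\ge2$) is estimated in $H^{k-(m+1)\max\{\alpha,1\}}$ by products $\|\nabla_t^a\vvt\|_{H^{s_a}}\|\nabla_t^b\vvt\|_{H^{s_b}}$ with exponents covered by the induction hypothesis — here the key bookkeeping is that the worst term in a $j$-fold derivative of $B(\vvt)$ is $-\Div(\vvt\,\vc v(\nabla_t^{j}\vvt))$ or $-\Div(\nabla_t^j\vvt\,\vc v(\vvt))$, which needs $\nabla_t^j\vvt$ in $H^{k-j\max\{\alpha,1\}+1}$, and this sits inside the order-$j$ hypothesis only because a single $B$ costs one derivative while the allotted budget per order is $\max\{\alpha,1\}\ge 1$; the remaining cross terms are strictly better. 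One must also handle pure $\tau$-derivatives beyond first order using $\vvt_\tau = A(\vvt)$ and $\partial_\tau^j \vvt = A^j(\vvt)$, costing $j\alpha \le j\max\{\alpha,1\}$ derivatives, and mixed $t$-$\tau$-$\omega$ derivatives by applying the appropriate defining relation in the last-differentiated slot and then induction.

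The main obstacle is the careful accounting of which Sobolev index is needed for each subterm arising from the chain rule on $B$, and verifying that the "one derivative per $B$" versus "$\max\{\alpha,1\}$ per order" budget never runs a deficit — in particular that for $\alpha=1$ (e.g.\ viscous Burgers, aggregation with $\alpha=1$, or QG with $\alpha=1$) the margin is exactly tight and the estimate still closes, while for $\alpha\ge2$ there is slack. A secondary technical point is that $\nabla_t^l\vvt$ is only defined piecewise on the boxes making up $\Om_{\Delta t}$, and the defining ODEs hold only in the open boxes; but since the bounds are uniform over all boxes and the initial value entering each box is itself controlled by the previous box's output (exactly as in the well-posedness recursion of Section \ref{sec:godunov}), the constant $C$ can be taken independent of $\Delta t$ and of the box. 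Assembling these pieces gives the stated uniform bound on all of $\Om_{\Delta t}$.
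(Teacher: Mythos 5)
There is a genuine gap, and it already appears at $|l|=1$. Your plan treats the three relations in \eqref{eq:strang-2} as identities valid at every point of $\Om_{\Delta t}$, so that you may ``apply the appropriate defining relation in the last-differentiated slot'' and reduce the order by one. But in Definition \ref{def:strang} the three flows are prescribed on nested sets of different dimensions: $\vvt_\omega=B(\vvt)$ holds throughout each box, $\vvt_\tau=A(\vvt)$ holds only on the face $\omega=\frac{t_n}{2}$, and $\vvt_t=B(\vvt)$ only on the edge $\tau=t_n$, $\omega=\frac{t_n}{2}$. At an interior point with $\omega>\frac{t_n}{2}$ one has in general $\vvt_\tau\neq A(\vvt)$ and $\vvt_t\neq B(\vvt)$ (these derivatives solve the linearized $\omega$-flow, e.g.\ $\partial_\omega\vvt_\tau=dB(\vvt)[\vvt_\tau]$), so your base case and the identities such as $\partial_\tau^j\vvt=A^j(\vvt)$ cannot be invoked pointwise on $\Om_{\Delta t}$; they are available only on the face, respectively the edge. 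Consequently every component of $\nabla_t^{q+1}\vvt$ containing no $\omega$-derivative cannot be rewritten at interior points by your scheme at all, and must instead be transported from the face (and then from the edge, and across boxes back to $(0,0,0)$), which is precisely the structure of the paper's proof.

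This transport is where your bookkeeping breaks down. Differentiating the only relation valid in the interior, one gets $\partial_\omega\Theta=-\Div\left(\Theta\,\vc{v}(\vvt)\right)-\Div\left(\vvt\,\vc{v}(\Theta)\right)+\text{(lower-order terms)}$ with $\Theta=\nabla_t^{q+1}\vvt$ the quantity being estimated; the top-order terms involve the current order, not the induction hypothesis, and a pointwise product estimate in $H^{\lambda}$, $\lambda=k-(q+1)\max\{\alpha,1\}$, would require $\Theta\in H^{\lambda+1}$ --- a loss of one derivative that does not close. The paper avoids this by an energy argument: test with $\Grad^s\Theta$, use the integration-by-parts cancellations of Lemma \ref{lem:lineardiv} so that the top-order terms contribute only $C\|\vvt\|_{H^{\lambda+1}}\|\Theta\|_{H^\lambda}^2$ (with $\|\vvt\|_{H^k}$ controlled by Lemma \ref{lem:higher-strang}) while the genuinely lower-order Leibniz terms are handled by the induction hypothesis, then apply Gronwall in $\omega$ (\eqref{eq:firststep}), pass through the face using $\vvt_\tau=A(\vvt)$ and property (4) of Definition \ref{def:operators} so the norm does not increase (\eqref{eq:secondstep}), repeat the Gronwall argument along the edge in $t$ (\eqref{eq:laststep}), chain over the boxes, and finally convert the $q+1$ time derivatives at $(0,0,0)$ into at most $(q+1)\max\{\alpha,1\}$ spatial derivatives of $u_0$. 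Your final step (converting time derivatives into spatial derivatives, budget $\max\{\alpha,1\}$ per order) matches the paper's treatment of the initial point, but without the three-stage Gronwall/energy propagation the interior estimate, and hence the induction, does not go through as proposed.
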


\begin{proof}
We will argue by induction on $|l|$. For $|l|=0$, the result 
follows from Lemma \ref{lem:strang}. 
To proceed we assume that the result holds for $|l|=0, \ldots, q$. To close 
the induction argument it remains to prove the result for $|l|=q+1$.

To simplify notation, we let 
$$
\lambda= k-|l|\max\{\alpha,1\} = k-(q+1)\max\{\alpha,1\}.
$$
Now, let $(t',\tau',\omega') \in \Omega_{\Delta t}$ be arbitrary and fix $n$ such 
that $(t',\tau',\omega')
\in[\frac{t_n}{2},\frac{t_{n+1}}{2}]\times[t_n,t_{n+1}]\times[\frac{t_n}{2},\frac{t_{n+1}}{2}]$. An arbitrary 
component of $\nabla_{t}^{l} \vvt$ can be written in the form
\begin{equation*}
	\Theta^{q+1}_{i,j,\ell} = \frac{\partial^{q+1}}{\partial t^{i}\partial\tau^{j}\partial\omega^{\ell}}\vvt, \quad 0 \leq i,j,\ell \leq q+1, \quad i+j+\ell = q+1, \quad i,j,\ell \in \mathbb{N}.
\end{equation*}
To estimate 
the arbitrary component at the point $(t',\tau',\omega')$, we apply
the fundamental theorem of calculus to obtain
\begin{equation}\label{eq:fundthm}
	\begin{split}
		&\frac{1}{2}\|\Theta_{i,j,\ell}^{q+1}(t',\tau',\omega')\|_{H^\lambda}^2 \\
		&\quad
                =\frac{1}{2}\|\Theta_{i,j,\ell}^{q+1}(t',\tau',\frac{t_n}{2})\|_{H^\lambda}^2\\
                &\qquad + \sum_{s=0}^\lambda\int_{\frac{t_n}{2}}^{\omega'}
                \int_{\R^N}\Grad^s\partial_\omega
                \Theta_{i,j,\ell}^{q+1}(t',\tau',\tilde\omega):\Grad^s
                \Theta_{i,j,\ell}^{q+1}(t',\tau',\tilde\omega)~dxd\tilde\omega \\
                &\quad = \frac{1}{2}\|\Theta_{i,j,\ell}^{q+1}(t',\tau',\frac{t_n}{2})\|_{H^\lambda}^2\\
                &\quad \qquad + \sum_{s=0}^\lambda \sum_{r=0}^i \sum_{n=0}^j
                \sum_{m=0}^{\ell}\left(i \atop r\right)\left(j \atop n\right)\left(\ell \atop m\right)\\
                &\quad \qquad  \qquad \times \int_{\frac{t_n}{2}}^{\omega'}
                \int_{\R^N}\Grad^s\Div\left(\Theta_{r,n,m}^{m+n+r}(t',\tau',\tilde\omega)
                \vc{v}\left(\Theta_{i-r,j-n,\ell-m}^{\abs{l}-r-n-m}(t',\tau',\tilde\omega)\right)\right) \\
                &\quad \qquad \qquad \qquad 
                \qquad: \Grad^s\Theta_{i,j,\ell}^{q+1}(t',\tau',\tilde\omega)~dxd\tilde\omega,
	\end{split}
\end{equation}
where we used the definition of $\Theta_{i,j,\ell}^{q+1}$ 
and $\vvt_\omega$ (cf.~\eqref{eq:strang-2}) to conclude the last equality. 
Let us consider three separate cases of $m+n+r$ in the quadruple sum above.\\
(i) If $m+n+r = q+1$, the corresponding term in the above reads
\begin{align}\label{eq:indu1}
	&\sum_{s=0}^\lambda\int_{\frac{t_n}{2}}^{\omega'}\int_{\R^N}
	\Grad^s\Div\left(\Theta_{i,j,\ell}^{q+1}(t',\tau',\tilde\omega) 
	\vc{v}\left(\vvt (t',\tau',\tilde\omega)\right)\right): 
	\Grad^s\Theta_{i,j,\ell}^{q+1}(t',\tau',\tilde\omega)~dxd\tilde\omega \nonumber\\
	&\qquad \leq C\int_{\frac{t_n}{2}}^{\omega'}
	\left\|\vc{v}(\vvt (t',\tau',\tilde\omega))\right\|_{H^\lambda}
	\left\|\Theta_{i,j,\ell}^{q+1}(t',\tau',\tilde\omega)\right\|_{H^\lambda}^2~d\tilde\omega \nonumber\\
	&\qquad \leq C\int_{\frac{t_n}{2}}^{\omega'}
	\left\|\Theta_{i,j,\ell}^{q+1}(t',\tau',\tilde\omega)\right\|_{H^\lambda}^2~d\tilde\omega,
\end{align}
where we have applied Lemmas \ref{lem:lineardiv} and  \ref{lem:higher-strang}. \\
(ii) If $m+n+r=0$, we can also apply 
Lemmas \ref{lem:lineardiv} and \ref{lem:higher-strang} to conclude
\begin{align*}
	&\sum_{s=0}^\lambda\int_{\frac{t_n}{2}}^{\omega'}\int_{\R^N}
	\Grad^s \Div \left(\vvt (t',\tau',\tilde\omega)
	\vc{v}\left(\Theta_{i,j,\ell}^{q+1}(t',\tau',\tilde\omega)\right)\right): 
	\Grad^s\Theta_{i,j,\ell}^{q+1}(t',\tau',\tilde\omega)~dxd\tilde\omega \nonumber\\
	&\qquad \leq C\int_{\frac{t_n}{2}}^{\omega'}
	\|\vvt(t',\tau',\tilde\omega)\|_{H^{\lambda+1}}
	\|\Theta_{i,j,\ell}^{q+1}(t',\tau',\tilde\omega)\|_{H^\lambda}^2~d\tilde\omega \nonumber \\
	&\qquad \leq C\int_{\frac{t_n}{2}}^{\omega'}
	\|\Theta_{i,j,\ell}^{q+1}(t',\tau',\tilde\omega)\|_{H^\lambda}^2~d\tilde\omega.
\end{align*}
(iii) For the remaining cases ($1 \leq m+n+r \leq q$), we apply the H\"older inequality 
to obtain
\begin{align}\label{eq:indu3}
	&\sum_{s=0}^\lambda\int_{\frac{t_n}{2}}^{\omega'}
	\int_{\R^N}\Grad^s \Div \left(\Theta_{r,n,m}^{m+n+r}(t',\tau',\tilde\omega)
	\vc{v}\left(\Theta_{i-r,j-n,\ell-m}^{q+1-r-n-m}(t',\tau',\tilde\omega)\right)\right)\nonumber \\
	&\qquad \qquad\qquad\qquad
	: \Grad^s\Theta_{i,j,\ell}^{q+1}(t',\tau',\tilde\omega)~dxd\tilde\omega \nonumber \\
	&\leq C\int_{\frac{t_n}{2}}^{\omega'}\|\Theta_{i,j,\ell}^{q+1}(t',\tau',\tilde\omega)\|_{H^\lambda}
	\|\Theta_{r,n,m}^{m+n+r}(t',\tau',\tilde\omega)\|_{H^{\lambda+1}}\\
	&\qquad \qquad\qquad\qquad \times
	\|\Theta_{i-r,j-n,\ell-m}^{{q+1}-r-n-m}(t',\tau',\tilde\omega)\|_{H^{\lambda+1}}~d\tilde\omega
	\nonumber \\
	&\leq C \int_{\frac{t_n}{2}}^{\omega'}
        \|\Theta_{i,j,\ell}^{q+1}(t',\tau',\tilde\omega)\|_{H^\lambda}~d\tilde\omega,\nonumber
\end{align}
where we have used that our induction hypothesis yields
\begin{equation*}
	\|\nabla_t^{q} \vvt\|_{H^{\lambda + 1}}= \|\nabla_t^{q} \vvt\|_{H^{k + 1 - (q+1)\max\{\alpha,1\}}} 
	\leq \|\nabla_t^{q} \vvt\|_{H^{k - q\max\{\alpha,1\}}} 
	\leq C.
\end{equation*}

By applying \eqref{eq:indu1}--\eqref{eq:indu3} to \eqref{eq:fundthm}, we gather
\begin{equation*}
	\begin{split}
		&\frac{1}{2}\|\Theta_{i,j,\ell}^{q+1}(t',\tau',\omega')\|_{H^\lambda}^2 \\
		&\qquad \leq \frac{1}{2}\|\Theta_{i,j,\ell}^{q+1}(t',\tau',\frac{t_n}{2})\|_{H^\lambda}^2 
		+ C\int_{\frac{t_n}{2}}^{\omega'}
		\|\Theta_{i,j,\ell}^{q+1}(t',\tau',\tilde\omega)\|_{H^\lambda}~d\tilde\omega  \\
		&\qquad \quad+ C\int_{\frac{t_n}{2}}^{\omega'}
		\|\Theta_{i,j,\ell}^{q+1}(t',\tau',\tilde\omega)\|_{H^\lambda}^2~d\tilde\omega \\
		&\qquad \leq \frac{1}{2}\|\Theta_{i,j,\ell}^{q+1}(t',\tau',\frac{t_n}{2})\|_{H^\lambda}^2 
		+ C\Delta t + C\int_{\frac{t_n}{2}}^{\omega'}
		\|\Theta_{i,j,\ell}^{q+1}(t',\tau',\tilde\omega)\|_{H^\lambda}^2~d\tilde\omega,
	\end{split}
\end{equation*}
where the last inequality as an application of the H\"older inequality to the second term.
Now, by applying the Gronwall inequality to the previous inequality, we conclude
\begin{equation}\label{eq:firststep}
	\begin{split}
		\|\Theta_{i,j,\ell}^{q+1}(t',\tau',\omega')\|_{H^\lambda}^2
		\leq \left(\|\Theta_{i,j,\ell}^{q+1}(t',\tau',\frac{t_n}{2})\|_{H^\lambda}^2 
		+ C\Delta t\right) e^{C\Delta t}.
	\end{split}
\end{equation}

We  have now derived a bound on $\Theta_{i,j,\ell}^{q+1}(t',\tau',\omega')$ in terms 
of  $\Theta_{i,j,\ell}^{q+1}(t',\tau',\frac{t_n}{2})$. Next, we derive a bound 
on  $\Theta_{i,j,\ell}^{q+1}(t',\tau',\frac{t_n}{2})$ 
in terms of  $\Theta_{i,j,\ell}^{q+1}(t',t_n,\frac{t_n}{2})$. 
For this purpose, we once more apply the 
fundamental theorem of calculus to obtain 
\begin{equation*}
	\begin{split}
		&\frac{1}{2}\|\Theta_{i,j,\ell}^{q+1}(t',\tau',\frac{t_n}{2})\|_{H^\lambda}^2 \\
		&\quad =\frac{1}{2}\|\Theta_{i,j,\ell}^{q+1}(t',t_n,\frac{t_n}{2})\|_{H^\lambda}^2\\
		&\qquad+\sum_{s=0}^\lambda
		\int_{t_n}^{\tau'}\int_{\R^N}\Grad^s\partial_\tau
		\Theta_{i,j,\ell}^{q+1}(t',\tilde s,\frac{t_n}{2}):\Grad^s
		\Theta_{i,j,\ell}^{q+1}(t',\tilde s,\frac{t_n}{2})~dxd\tilde s \\
		&\quad = \frac{1}{2}\|\Theta_{i,j,\ell}^{q+1}(t',t_n,\frac{t_n}{2})\|_{H^\lambda}^2\\
		&\qquad+\sum_{s=0}^\lambda
		\int_{t_n}^{\tau'}\int_{\R^N}
		\Grad^s A\left(\Theta_{i,j,\ell}^{q+1}(t',\tilde s,\frac{t_n}{2})\right):
		\Grad^s\Theta_{i,j,\ell}^{q+1}(t',\tilde s,\frac{t_n}{2})~dxd\tilde s \\ 
		&\quad \leq \frac{1}{2}
		\|\Theta_{i,j,\ell}^{q+1}(t',t_n,\frac{t_n}{2})\|_{H^\lambda}^2,
	\end{split}
\end{equation*}
where we have used that $\vvt_\tau = A(\vvt)$ for $\omega= \frac{t_n}{2}$ and (4) of Definition \ref{def:operators}.
It follows that
\begin{equation}\label{eq:secondstep}
	\|\Theta_{i,j,\ell}^{q+1}(t',\tau',\frac{t_n}{2})\|_{H^k}^2 
	\leq \|\Theta_{i,j,\ell}^{q+1}(t',t_n,\frac{t_n}{2})\|_{H^k}^2.
\end{equation}

Finally, we perform our last application of the fundamental theorem
to obtain
\begin{equation*}\label{eq:fundthm2}
	\begin{split}
		&\frac{1}{2}\|\Theta_{i,j,\ell}^{q+1}(t',t_n,\frac{t_n}{2})\|_{H^\lambda}^2 
		-\frac{1}{2}\|\Theta_{i,j,\ell}^{q+1}(\frac{t_n}{2},t_n,\frac{t_n}{2})\|_{H^\lambda}^2\\
		&\qquad \qquad=\sum_{s=0}^\lambda
		\int_{\frac{t_n}{2}}^{\omega'}\int_{\R^N}
		\Grad^s\partial_t\Theta_{i,j,\ell}^{q+1}(\tilde s,t_n,\frac{t_n}{2}):
		\Grad^s\Theta_{i,j,\ell}^{q+1}(\tilde s,t_n,\frac{t_n}{2})~dxd\tilde s.
	\end{split}
\end{equation*}
By applying the same calculations to the previous 
equations as those leading to \eqref{eq:firststep}, we find that
\begin{equation}\label{eq:laststep}
	\begin{split}
		\|\Theta_{i,j,\ell}^{q+1}(t',t_n,\frac{t_n}{2})\|_{H^\lambda}^2 
		\leq \left(\|\Theta_{i,j,\ell}^{q+1}(\frac{t_n}{2},t_n,\frac{t_n}{2})\|_{H^\lambda}^2 + C \Delta t\right)e^{C\Delta t}.
	\end{split}
\end{equation}

Combining \eqref{eq:firststep}, \eqref{eq:secondstep}, and \eqref{eq:laststep} gives
\begin{equation*}
	\|\Theta_{i,j,\ell}^{q+1}(t',\tau',\omega')\|_{H^\lambda} 
	\leq
        \|\Theta_{i,j,\ell}^{q+1}(\frac{t_n}{2},t_n,\frac{t_n}{2})\|_{H^\lambda}e^{2C\Delta t} 
        + C\Delta t\left(e^{2C\Delta t} + e^{C\Delta t} \right),
\end{equation*}
which immediately leads to the conclusion
\begin{equation}\label{eq:almostdone}
	\begin{split}
		\|\Theta_{i,j,\ell}^{q+1}(t',\tau',\omega')\|_{H^\lambda} 
		&\leq \|\Theta_{i,j,\ell}^{q+1}(0,0,0)\|_{H^\lambda}e^{nC\Delta t}
		+ C\Delta t\sum_{\vr= 1}^Ne^{\vr C\Delta t}		\\
		&\leq \|\Theta_{i,j,\ell}^{q+1}(0,0,0)\|_{H^\lambda}e^{Ct'} + Ct'e^{Ct'}.
	\end{split}
\end{equation}

Finally, let us estimate $\|\Theta_{i,j,\ell}^{q+1}(0,0,0)\|_{H^\lambda}$. By definition, we have that
\begin{equation*}
	\Theta_{i,j,\ell}^{q+1}(0,0,0) =
        \frac{\partial^{q+1}}{\partial t^{i}\partial\tau^{j}\partial\omega^{\ell}}\vvt(0,0,0).
\end{equation*}
At this point, we can apply each of the time-derivatives to $\vvt(0,0,0)$ and use 
the definition \eqref{eq:strang-2} to translate time-derivatives into spatial derivatives. 
The maximal number of spatial derivatives for each time derivative is $\max\{\alpha,1\}$
and consequently $q+1$ time derivatives translates into at most $(q+1)\max\{\alpha,1\}$
spatial derivatives. As a consequence, we can conclude that
\begin{equation*}
	\|\Theta_{i,j,\ell}^{q+1}(0,0,0)\|_{H^\lambda} 
	= \|\Theta_{i,j,\ell}^{q+1}(0,0,0)\|_{H^{k-(q+1)\max\{\alpha,1\}}}\leq C\|u_0\|_{H^{k}}. 
\end{equation*}
From this, and  \eqref{eq:almostdone}, we conclude that the lemma 
holds also for $\abs{l}=q+1$. 
\end{proof}

Using the previous lemma, we now deduce time regularity of 
the source term $F$.

\begin{lemma}\label{lem:doubleF}
	Let $\vvt$ be the Strang splitting solution in 
	the sense of Definition \ref{def:strang} and \eqref{eq:strang-2}.
	Then, for any $3\max\{\alpha,1\} \leq k\in \N$  such that $\|u_0\|_{H^{k}} \leq
        C$, we have 
\begin{equation*}
	\|\nabla_{ t}^{2} F(t,\tau,\omega)\|_{H^{k-3\max\{\alpha,1\}}} \leq C, \quad (t,\tau, \omega) \in \Om_{\Delta t}.
\end{equation*}
\end{lemma}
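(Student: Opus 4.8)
The plan is to obtain the bound directly from the temporal regularity of $\vvt$ furnished by Lemma~\ref{lem:tempv}, by differentiating the defining formula for $F$; no fresh energy estimate is required. Recall that
\[
	F(t,\tau,\omega)=\frac12\Bigl(\vvt_t(t,\tau,\omega)-B(\vvt(t,\tau,\omega))\Bigr)
	+\vvt_\tau(t,\tau,\omega)-A(\vvt(t,\tau,\omega)),
\]
with $B(f)=-\Div(f\,\vc{v}(f))$ (see \eqref{eq:bb}). On each box of $\Om_{\Delta t}$ the function $\vvt$ solves the ordinary differential equations \eqref{eq:strang-2}, whose right-hand sides are spatially smooth, so $\vvt$, and hence $F$, is as smooth in the time variables as needed; the term-by-term differentiation below is therefore justified, and the entire content lies in the spatial-derivative bookkeeping.

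Fix a multi-index $l$ with $|l|=2$ and write $D=\nabla_t^l$, a composition of two of the partials $\partial_t,\partial_\tau,\partial_\omega$. Since $A$ is linear and $A$, $\vc{v}$, $\Div$ act only on the spatial variable, all three commute with $D$; together with the Leibniz rule this gives
\[
	DF=\frac12\,D\vvt_t+D\vvt_\tau-A(D\vvt)+\frac12\sum\Div\bigl(\nabla_t^{l_1}\vvt\;\vc{v}(\nabla_t^{l_2}\vvt)\bigr),
\]
a finite sum over multi-indices with $|l_1|+|l_2|=2$. It remains to estimate the three groups of terms in $H^{k-3\max\{\alpha,1\}}$.

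The terms $D\vvt_t$ and $D\vvt_\tau$ are components of $\nabla_t^3\vvt$, so Lemma~\ref{lem:tempv} applied with $|l|=3$ bounds them in exactly $H^{k-3\max\{\alpha,1\}}$; this is the term that dictates the loss of $3\max\{\alpha,1\}$ derivatives in the statement. For $A(D\vvt)$, requirement (3) in Definition~\ref{def:operators} gives $\|A(D\vvt)\|_{H^{k-3\max\{\alpha,1\}}}\le C\|D\vvt\|_{H^{k-3\max\{\alpha,1\}+\alpha}}$, and since $\alpha\le\max\{\alpha,1\}$ we have $k-3\max\{\alpha,1\}+\alpha\le k-2\max\{\alpha,1\}$, so Lemma~\ref{lem:tempv} with $|l|=2$ closes this term. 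For each summand of the last sum we invoke the product estimate $\|\Div(f\,\vc{v}(g))\|_{H^m}\le C\|f\|_{H^{m+1}}\|g\|_{H^{m+1}}$ (a consequence of the boundedness of $\vc{v}$ in (2) of Definition~\ref{def:operators} together with the algebra property of $H^{m+1}$ — the same ingredients behind Lemmas~\ref{lem:nonlineardiv} and \ref{lem:lineardiv}); with $m=k-3\max\{\alpha,1\}$ and $1\le\max\{\alpha,1\}$ we have $m+1\le k-2\max\{\alpha,1\}$, and since $|l_1|,|l_2|\le2$ each factor $\nabla_t^{l_i}\vvt$ is bounded in $H^{m+1}$ by Lemma~\ref{lem:tempv}. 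Summing the finitely many contributions yields $\|DF\|_{H^{k-3\max\{\alpha,1\}}}\le C$, and since $D$ was an arbitrary composition of two time partials, the lemma follows.

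The only real obstacle is the derivative accounting just sketched: one must check that the diffusion/dispersion term $A(D\vvt)$ (carrying two time derivatives of $\vvt$ together with $\alpha$ extra spatial derivatives) and the nonlinear term $DB(\vvt)$ (two time derivatives plus one extra spatial derivative) are no more expensive than the genuinely third-order-in-time terms $D\vvt_t$ and $D\vvt_\tau$. This works precisely because $\max\{\alpha,1\}$ simultaneously dominates $\alpha$ and $1$: each of the up to three time differentiations hidden in $DF$ can be paid for with $\max\{\alpha,1\}$ spatial derivatives, whether it lands on a factor of type $\vvt_t$ or $B(\vvt)$ or on one of type $\vvt_\tau$ or $A(\vvt)$, which is exactly why the estimate scales with $3\max\{\alpha,1\}$ and feeds the corresponding loss in Lemma~\ref{lem:rate}.
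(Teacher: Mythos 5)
Your route is the same as the paper's: write out $F$ explicitly, differentiate the formula twice in the time variables (using that $A$, $\vc{v}$, $\Div$ commute with $\partial_t,\partial_\tau,\partial_\omega$), expand by Leibniz, and close every term with Lemma \ref{lem:tempv}; the treatment of $\frac12 D\vvt_t+D\vvt_\tau$ (via $|l|=3$) and of $A(D\vvt)$ (via $\alpha\le\max\{\alpha,1\}$ and $|l|=2$) matches the paper's estimate exactly.

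The one step that does not hold as written is the symmetric product estimate $\|\Div(f\,\vc{v}(g))\|_{H^m}\le C\|f\|_{H^{m+1}}\|g\|_{H^{m+1}}$ justified by ``the algebra property of $H^{m+1}$''. With $m=k-3\max\{\alpha,1\}$ the hypothesis of the lemma allows $m=0$ (e.g.\ $\alpha=2$, $k=6$, which is the borderline case of Theorem \ref{thm:strang}), and then $H^{m+1}=H^1$ is not an algebra for $N=2,3$; indeed $\|\Grad f\cdot\vc{v}(g)\|_{L^2}$ cannot be controlled by $\|f\|_{H^1}\|g\|_{H^1}$. This is precisely the point the paper flags with ``note that we can have $\ell\le 2$'', and it avoids the problem by estimating asymmetrically: the factor carrying two time derivatives is only placed in $H^{\ell+1}$, while the spatially smoother factors are placed in $H^{\ell+2}$ or $H^{\ell+3}$ so that the needed $L^\infty$ bounds come from the embedding $H^2\subset L^\infty$, leading to
\begin{equation*}
	\|F_{ij}\|_{H^\ell}\le C\left(\|\nabla_t^3\vvt\|_{H^\ell}
	+\|\nabla_t^2\vvt\|_{H^{\ell+\max\{\alpha,1\}}}
	+\|\nabla_t\vvt\|_{H^{\ell+2}}\right),
\end{equation*}
each term of which is bounded by Lemma \ref{lem:tempv} because $2\le 2\max\{\alpha,1\}$ and $3\le 3\max\{\alpha,1\}$. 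Your argument has all the ingredients for this repair (Lemma \ref{lem:tempv} gives $\vvt\in H^k\subset H^{\ell+3}$ and $\nabla_t\vvt\in H^{k-\max\{\alpha,1\}}\subset H^{\ell+2}$), so the proof is salvageable by replacing the symmetric $H^{m+1}\times H^{m+1}$ bound with these asymmetric H\"older/Sobolev estimates; as stated, however, the inequality you invoke fails exactly at the lowest regularity the lemma is meant to cover.
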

\begin{proof}
Let $\ell = k - 3\max\{\alpha,1\}$.
Let $i$ and $j$ denote any one of $t$, $\tau$, or $\omega$. An arbitrary component 
of $\nabla^2_t F$ can then be written $F_{ij}:=\partial_i \partial_j F$. By definition, 
we have that 
\begin{equation*}
	\begin{split}
	F_{ij} &= \partial_i \partial_j \left[	\frac{1}{2}\left(\vvt_t + \Div (\vvt \vc{v}(\vvt)\right)
			 + \vvt_\tau - A(\vvt)\right]\\
	&= \frac{1}{2}\partial_t \vvt_{ij} + \partial_\tau \vvt_{ij}  - A(\vvt_{ij})
	+ \frac{1}{2}\Div \left(\vvt_{ij} \vc{v}(\vvt) + \vvt_i\vc{v}(\vvt_j)+\vvt_j\vc{v}(\vvt_i)+\vvt \vc{v}(\vvt_{ij})\right).
	\end{split}
\end{equation*}	
By applying the H\" older inequality and Sobolev embedding, we make 
the gross overestimation (note that we can have $\ell \leq 2$):
\begin{equation*}
	\begin{split}
		\|F_{ij}\|_{H^\ell} 
		& \leq \frac{3}{2}\|\nabla_{ t}^{3} \vvt\|_{H^\ell}
		+ \|\nabla_{t}^2 \vvt\|_{H^{\ell+\alpha}} 
		+ \|\Grad \vvt_{ij} \cdot \vc{v}(\vvt)\|_{H^{\ell}}
		+ \|\vvt_{ij} \Div \vc{v}(\vvt)\|_{H^{\ell}} \\
		&\quad 
		+\|\Grad \vvt_{i} \cdot \vc{v}(\vvt_j)\|_{H^{\ell}} 
		+ \|\vvt_{i} \Div \vc{v}(\vvt_j)\|_{H^{\ell}} 
		+\|\Grad \vvt_{j} \cdot \vc{v}(\vvt_i)\|_{H^{\ell}}\\
		&\quad 
		+ \|\vvt_{j} \Div \vc{v}(\vvt_i)\|_{H^{\ell}} 
		+\|\Grad \vvt \cdot \vc{v}(\vvt_{ij})\|_{H^{\ell}}
		+ \|\vvt \Div \vc{v}(\vvt_{ij})\|_{H^{\ell}} \\
		&\leq \frac{3}{2}\|\nabla_{ t}^3 \vvt\|_{H^\ell}
		+ \|\nabla_{t}^2 \vvt\|_{H^{\ell+\alpha}}
		 + C\|\nabla_t^2 \vvt\|_{H^{\ell+1}}
		 \left(\|\vc{v}(\vvt)\|_{H^{\ell+2}} + \|\Div \vc{v}(\vvt)\|_{H^{\ell+2}}\right) \\
		&\quad + 2\|\nabla_t\vvt\|_{H^{\ell+1}}\|\vc{v}(\nabla_t\vvt)\|_{H^{\ell+2}}
		+2\|\nabla_t \vvt\|_{H^{l+2}}\|\vc{v}(\nabla_t \vvt)\|_{H^{l+1}} \\
		&\quad+ C\|\vvt\|_{H^{\ell+3}}\left(\|\vc{v}(\nabla_t^2\vvt )\|_{H^\ell} 
		+\|\Div \vc{v}(\nabla_t^2\vvt )\|_{H^\ell}\right)\\
		&\leq C\left( \|\nabla_{ t}^3 \vvt\|_{H^\ell}+\|\nabla_{t}^2 \vvt\|_{H^{\ell+\max\{\alpha,1\}}} 
		+ \|\nabla_t \vvt\|_{H^{\ell+2}} \right).
	\end{split}
\end{equation*}
Since $\vvt(0,0) := u_0 \in H^k$, Lemma \ref{lem:tempv} tell 
us that the right-hand side is bounded and hence our proof is complete.
\end{proof}

The next lemma is the main property
giving second order convergence. Observe that the result does 
not depend on our specific choice of operators $A$ and $B$.
\begin{lemma}\label{lem:magic}
	There holds
	\begin{equation*}
		\nabla_{ t}F(\frac{t_n}{2},t_n,\frac{t_n}{2})\cdot
		\begin{pmatrix}
			1 \\ 2 \\ 1
		\end{pmatrix}
		= 0.
	\end{equation*}	
\end{lemma}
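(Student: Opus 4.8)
The plan is to verify the identity at the single corner point $p_n := (\tfrac{t_n}{2}, t_n, \tfrac{t_n}{2})$ of the box $[\tfrac{t_n}{2},\tfrac{t_{n+1}}{2}]\times[t_n,t_{n+1}]\times[\tfrac{t_n}{2},\tfrac{t_{n+1}}{2}]$, writing $\nabla_t F(p_n)\cdot(1,2,1) = F_t(p_n) + 2F_\tau(p_n) + F_\omega(p_n)$ and splitting this into the two claims $F_t(p_n)=0$ and $2F_\tau(p_n)+F_\omega(p_n)=0$. The common ingredient is that $F$ itself vanishes at $p_n$: on the plane $\omega=\tfrac{t_n}{2}$ the definition \eqref{eq:strang-2} gives $\vvt_\tau = A(\vvt)$, so the term $\vvt_\tau - A(\vvt)$ in $F$ is zero at $p_n$; and along the segment $\tau=t_n$, $\omega=\tfrac{t_n}{2}$ the definition gives $\vvt_t = B(\vvt)$ for $t\in(\tfrac{t_n}{2},\tfrac{t_{n+1}}{2}]$, hence also at the left endpoint $t=\tfrac{t_n}{2}$ by continuity, so the term $\tfrac12(\vvt_t - B(\vvt))$ is zero at $p_n$ too. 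Thus $F(p_n)=0$.

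For $2F_\tau + F_\omega$ I would use the evolution equations already derived for $F$: the identity $F_\tau - dA(\vvt)[F] = \tfrac12[A,B](\vvt)$ valid on the plane $\omega=\tfrac{t_n}{2}$ (i.e.\ \eqref{eq:feqS}), and $F_\omega - dB(\vvt)[F] = [B,A](\vvt)$ valid on all of $\Om_{\Delta t}$ (i.e.\ \eqref{eq:feq2S}). Evaluating both at $p_n$ and using $F(p_n)=0$ --- which kills $dA(\vvt)[F]$, $dB(\vvt)[F]$, $A(F)$ and $\Div(\vvt\vc{v}(F)+F\vc{v}(\vvt))$ all at once --- yields $F_\tau(p_n)=\tfrac12[A,B](\vvt(p_n))$ and $F_\omega(p_n)=[B,A](\vvt(p_n))=-[A,B](\vvt(p_n))$. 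Hence $2F_\tau(p_n)+F_\omega(p_n)=[A,B](\vvt(p_n))-[A,B](\vvt(p_n))=0$.

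The remaining claim $F_t(p_n)=0$ I would obtain by differentiating the definition $F = \tfrac12(\vvt_t - B(\vvt)) + (\vvt_\tau - A(\vvt))$ once more in $t$: $F_t = \tfrac12(\vvt_{tt} - dB(\vvt)[\vvt_t]) + (\vvt_{t\tau} - dA(\vvt)[\vvt_t])$, the $C^2$ time regularity from Lemma \ref{lem:tempv} justifying both this differentiation and the interchange of mixed partials. Now the whole segment $\tau=t_n$, $\omega=\tfrac{t_n}{2}$ lies on the line where $\vvt_t = B(\vvt)$ and simultaneously (by continuity from $\tau>t_n$) on the plane where $\vvt_\tau = A(\vvt)$; differentiating each relation along this segment in $t$ and using linearity of $A$ gives $\vvt_{tt} = dB(\vvt)[\vvt_t]$ and $\vvt_{t\tau} = A(\vvt_t) = dA(\vvt)[\vvt_t]$ at $p_n$. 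Both brackets therefore vanish, so $F_t(p_n)=0$, and together with the previous step this gives $\nabla_t F(p_n)\cdot(1,2,1)=0$.

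The main obstacle is not the algebra (each cancellation is one line) but the care needed at the corner $p_n$, where the cubes making up $\Om_{\Delta t}$ meet only in a single point: one must be precise about which one-sided derivatives are meant and about the validity of the defining relations and of Lemma \ref{lem:tempv} on closed faces and edges before the above differentiations are meaningful. I would state the required one-sided $C^2$-regularity of $\vvt$ near each $p_n$ explicitly first. (It is also worth noting that $(1,2,1)$ is twice the tangent of the diagonal $t\mapsto\vvt(\tfrac t2,t,\tfrac t2)$, so the lemma says exactly that $F$ along that diagonal has a double zero at every grid point $t_n$ --- precisely the structural fact underlying the second-order estimate of Lemma \ref{lem:rate}.)
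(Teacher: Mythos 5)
Your proposal is correct and follows essentially the same route as the paper: you evaluate $F_\tau$ and $F_\omega$ at the corner via \eqref{eq:feqS} and \eqref{eq:feq2S}, using $F=0$ there to drop the $A(F)$ and divergence terms, and obtain the cancellation $2\cdot\tfrac12[A,B](\vvt)-[A,B](\vvt)=0$. The only cosmetic difference is in the step $F_t=0$: the paper simply observes that $F(t,t_n,\tfrac{t_n}{2})\equiv 0$ along the whole segment and differentiates that identity in $t$, whereas you re-derive the same fact by differentiating the defining relations of $\vvt$, which is equivalent (and your care about one-sided derivatives at the corner is a welcome, if implicit in the paper, precaution).
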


\begin{proof}
We will prove Lemma \ref{lem:magic} by direct calculation. Let us 
begin by estimating $F_\omega(\frac{t_n}{2},t_n,\frac{t_n}{2})$. 
Since $F(t,t_n,\frac{t_n}{2})= 0$,  we have that
\begin{equation}\label{eq:easy1}
	\begin{split}
		F_\omega(\frac{t_n}{2},t_n,\frac{t_n}{2}) = - [A,B](\vvt).
	\end{split}
\end{equation}
Similarly, we see that \eqref{eq:feqS} yields
\begin{equation}\label{eq:easy2}
	\begin{split}
		F_\tau = \frac{1}{2}[A,B](\vvt).
	\end{split}
\end{equation}
It only remains to estimate $F_t(\frac{t_n}{2},t_n,\frac{t_n}{2})$. 
However, as $F(t,t_n,\frac{t_n}{2}) = 0$,  we must 
have that $F_t(\frac{t_n}{2},t_n,\frac{t_n}{2})= 0$. 
This, together with \eqref{eq:easy1} 
and \eqref{eq:easy2}, concludes the proof.
\end{proof}

Using the previous lemma, we can now prove that 
the error produced along the diagonal $(t/2,t,t/2)$
is second order in $\Delta t$. 
\begin{lemma}\label{lem:spoton}
	Let $\vvt$ be the Strang splitting solution in the sense 
	of Definition \ref{def:strang} and \eqref{eq:strang-2}. 
	Then, if $u_0 \in H^k$,
	\begin{equation*}
		\left\|F\left(\frac{t}{2},t,\frac{t}{2}\right)\right\|_{H^{k-3\max\{\alpha,1\}}} 
		\leq C(\Delta t)^2.
	\end{equation*}
\end{lemma}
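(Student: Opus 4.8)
The plan is to combine Lemma~\ref{lem:magic} with a second-order Taylor expansion of $F$ along the diagonal $s\mapsto(\tfrac{s}{2},s,\tfrac{s}{2})$. Fix $t\in[0,T]$ and pick $n$ with $t\in[t_n,t_{n+1}]$, so that $(\tfrac{t}{2},t,\tfrac{t}{2})$ lies in the $n$th box of $\Om_{\Delta t}$. Write $\ell=k-3\max\{\alpha,1\}$ and introduce the one-variable function
\[
g(s)=F\!\left(\tfrac{s}{2},s,\tfrac{s}{2}\right)\in H^{\ell},\qquad s\in[t_n,t_{n+1}],
\]
so that $g(t)$ is exactly the quantity we must bound. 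First I would record that $F(\cdot,t_n,\tfrac{t_n}{2})\equiv 0$ on the box — this is the identity already used in the proof of Lemma~\ref{lem:magic} — which gives both $g(t_n)=F(\tfrac{t_n}{2},t_n,\tfrac{t_n}{2})=0$ and $F_t(\tfrac{t_n}{2},t_n,\tfrac{t_n}{2})=0$.

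Next I would differentiate: by the chain rule $g'(s)=\tfrac12 F_t+F_\tau+\tfrac12 F_\omega$ evaluated at $(\tfrac{s}{2},s,\tfrac{s}{2})$, which is $\tfrac12$ times the directional derivative $\nabla_t F\cdot(1,2,1)$ appearing in Lemma~\ref{lem:magic}. Evaluating at $s=t_n$ and invoking that lemma gives $g'(t_n)=0$. Since $g(t_n)=g'(t_n)=0$, Taylor's formula with integral remainder yields
\[
g(t)=\int_{t_n}^{t}(t-s)\,g''(s)\,ds,
\]
whence, using $|t-t_n|\le\Delta t$,
\[
\|g(t)\|_{H^{\ell}}\le\frac{(\Delta t)^2}{2}\ \sup_{s\in[t_n,t_{n+1}]}\|g''(s)\|_{H^{\ell}}.
\]

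It then remains only to bound $g''$. Differentiating once more, $g''(s)$ is a fixed linear combination of the six second-order time derivatives $F_{tt},F_{t\tau},F_{\tau\tau},F_{t\omega},F_{\tau\omega},F_{\omega\omega}$ taken along the diagonal, i.e.\ of the components of $\nabla_t^2 F$; the first-derivative terms in the chain rule drop out because the diagonal is affine. By Lemma~\ref{lem:doubleF} (applicable since $\|u_0\|_{H^k}\le C$, as guaranteed by Lemma~\ref{lem:strang}), each such component is bounded in $H^{\ell}$ by a constant depending only on $u_0$ and $T$. Hence $\sup_s\|g''(s)\|_{H^{\ell}}\le C$ and the assertion follows.

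I expect the substantive work to be behind us already: the whole $O((\Delta t)^2)$ gain rests on Lemma~\ref{lem:magic}, which says precisely that the tangent direction $(\tfrac12,1,\tfrac12)$ of the diagonal is the direction in which the first-order Taylor coefficient of $F$ at the corner $(\tfrac{t_n}{2},t_n,\tfrac{t_n}{2})$ vanishes, upgrading the naive $O(\Delta t)$ estimate (valid in any direction from $g(t_n)=0$ alone, via Lemma~\ref{lem:tempv}) to $O((\Delta t)^2)$. The only points requiring care are that $g$ is genuinely $C^2$ up to $s=t_n$, so that the expansion around the corner is legitimate, and that the chain-rule terms entering $g''$ are exactly the components of $\nabla_t^2 F$ controlled by Lemma~\ref{lem:doubleF}; both are immediate from the time-regularity of $\vvt$ in Lemma~\ref{lem:tempv} and the defining formula for $F$.
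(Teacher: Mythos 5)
Your proposal is correct and takes essentially the same route as the paper: a Taylor expansion of $F$ along the diagonal at the corner $(\tfrac{t_n}{2},t_n,\tfrac{t_n}{2})$, with $F=0$ at the corner, the first-order term killed by Lemma~\ref{lem:magic}, and the integral remainder bounded in $H^{k-3\max\{\alpha,1\}}$ via the $\nabla_t^2F$ estimate of Lemma~\ref{lem:doubleF}. The only difference is cosmetic: you parametrize the diagonal by the scalar function $g(s)=F(\tfrac{s}{2},s,\tfrac{s}{2})$, while the paper writes the same expansion directly with the direction vector $(1,2,1)$.
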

\begin{proof}
Since $F(\frac{t_n}{2},t_n,\frac{t_n}{2}) = 0$, a Taylor expansion provides the identity
\begin{equation}\label{eq:jeezus}
	\begin{split}
		F\left(\frac{t}{2},t,\frac{t}{2}\right)  
		&= \nabla_{t}F(\frac{t_n}{2},t_n,\frac{t_n}{2})\cdot
		\begin{pmatrix}
			1 \\ 2 \\ 1
		\end{pmatrix}(\frac{t}{2}-\frac{t_n}{2}) \\
		&\qquad\qquad 
		+\frac{1}{2}\int_{t_n/2}^{t/2}
		\begin{pmatrix}
			1 \\ 2 \\ 1
		\end{pmatrix}^T
		\nabla_{t}^2 F \left(\frac{s}{2},s,\frac{s}{2}\right)
		\begin{pmatrix}
			1 \\ 2 \\ 1
		\end{pmatrix}(\frac{s}{2}-\frac{t_n}{2})~ds\\
		&= \frac{1}{2}\int_{t_n/2}^{t/2}
		\begin{pmatrix}
			1 \\ 2 \\ 1
		\end{pmatrix}^T
		\nabla_{t}^2 F \left(\frac{s}{2},s,\frac{s}{2}\right)
		\begin{pmatrix}
			1 \\ 2 \\ 1
		\end{pmatrix}(\frac{s}{2}-\frac{t_n}{2})~ds,
		\end{split}
	\end{equation}
	where the last equality is an application of Lemma \ref{lem:magic}.
	By taking the $H^{k-3\max\{\alpha,1\}}$ norm on both 
	sides of \eqref{eq:jeezus} and applying the 
	previous lemma, we gather
	\begin{equation*}
		\left\|F(\frac{t}{2},t,\frac{t}{2})\right\|_{H^k} \leq C(\Delta t)^2,
	\end{equation*}
	which concludes the proof.
\end{proof}

\begin{proof}[Proof of Lemma \ref{lem:rate}]
By performing the same calculations as those in the proof of Lemma \ref{lem:godunoverror}
with the new estimate on $F$ given by Lemma \ref{lem:spoton}, we obtain the estimate
\begin{equation*}
	\frac{1}{2}\partial_t\|e(t)\|_{H^{k-3\max\{\alpha,1\}}} \leq C(T)\left((\Delta t)^2 
	+ \|e(t)\|_{H^{k-3\max\{\alpha,1\}}}\right), \qquad t \in (0,T).
\end{equation*}
Since $e(0)=0$, an application of the Gronwall inequality to the previous inequality gives
\begin{equation*}
	\|e(t)\|_{H^{k-3\max\{\alpha,1\}}} \leq t\, (\Delta t)^2C, \quad t \in [0,T],
\end{equation*}
which concludes the proof of Lemma \ref{lem:rate} and 
consequently also Theorem \ref{thm:strang}.
\end{proof}

\section{Applications}\label{sec:applications}
In the previous sections we have established well-posedness and convergence 
rates for both Godunov and Strang splitting applied to \eqref{eq:eq}.
In this section we will examine a range of different equations that are 
all of the form \eqref{eq:eq} with $A$ and $\vc{v}$ being 
admissible (in the sense of Definition \ref{def:operators}). Our primary 
goal is to equip the reader with some relevant applications of our framework.
Let us for the convenience of the reader repeat our  
equation and assumptions here.
We are working with the equation
\begin{equation*}
	u_t + \Div \left(u\,\vc{v}(u)\right) 
	= A(u), \qquad u|_{t=0} = u_0,
\end{equation*}
where the operators are required to be admissible in 
the  sense of Definition \ref{def:operators}.

Given a specific equation, the only non-trivial conditions in Definition \ref{def:operators}  are
the well-posedness (6) and the commutator estimate (5).
The following lemma is of help to determine the latter.
\begin{lemma}\label{lem:examples}
Let $\alpha \in (0,2)$ and $l$ be any multi-index. The standard differential operator
$A(u)= D^lu$
and the fractional Laplacian
$A(u)= (-\Delta )^{\alpha/2} u$
both satisfy  
(3)--(5) in Definition \ref{def:operators}.
\end{lemma}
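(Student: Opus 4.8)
The plan is to verify conditions (3), (4), and (5) of Definition~\ref{def:operators} separately for the two families of operators, since (3) and (4) are elementary while (5) requires genuine work for the fractional Laplacian.

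First I would dispose of the standard differential operator $A(u) = D^l u$. Condition (3) is immediate with $\alpha = |l|$: differentiating $|l|$ times maps $W^{|l|+k,p}$ boundedly into $W^{k,p}$. For condition (4), integration by parts gives $\int_{\R^N} D^l u \, u \, dx = \pm \int_{\R^N} D^{l'}u \, D^{l''}u\, dx$ after splitting $l = l' + l''$; when $|l|$ is odd this vanishes by antisymmetry, and when $|l|$ is even one can write it (up to sign) as $\pm \int |D^{l/2}u|^2 \, dx$, which has a definite sign --- so $A$ is either conservative or diffusive depending on the parity and the sign convention. For condition (5), the Leibniz rule gives $D^l(fg) - f D^l g - g D^l f = \sum_{0 < l' < l} \binom{l}{l'} D^{l'}f \, D^{l-l'}g$, a sum of products in which each factor carries at most $|l|-1 = \alpha - 1 \le k + \max\{\alpha,2\} - 1$ derivatives; estimating each term by H\"older's inequality together with the Sobolev embedding $H^{k}\hookrightarrow L^\infty$ (valid since $k \ge 3 > N/2$ for $N \le 3$) yields the stated bound.

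Next I would treat the fractional Laplacian $A(u) = (-\Delta)^{\alpha/2}u$ with $\alpha \in (0,2)$. On the Fourier side $\widehat{A(u)}(\xi) = |\xi|^\alpha \hat u(\xi)$, so for condition (3) the bound $(1+|\xi|^2)^{k/2}|\xi|^\alpha \le (1+|\xi|^2)^{(k+\alpha)/2}$ shows $A \colon W^{\alpha+k,2}\to W^{k,2}$ is bounded, and the $L^p$ version for $p \ne 2$ follows from the Mikhlin--H\"ormander multiplier theorem applied to the symbol $|\xi|^\alpha (1+|\xi|^2)^{-\alpha/2}$ (or one simply restricts attention to the $p=2$ case, which is all the rest of the paper uses). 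Condition (4) is the key structural point: by Plancherel, $\int_{\R^N} (-\Delta)^{\alpha/2}u \, u\, dx = \int_{\R^N} |\xi|^\alpha |\hat u(\xi)|^2\, d\xi \ge 0$, so $A$ is diffusive (note the sign: $\int A(u)u \le 0$ requires writing the diffusion term on the correct side, consistent with \eqref{eq:qg}--\eqref{eq:ag} where it appears as $+(-\Delta)^{\alpha/2}u$ on the left).

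The main obstacle is condition (5), the commutator estimate for the fractional Laplacian. I would invoke the Kenig--Ponce--Vega / Kato--Ponce-type fractional Leibniz estimates: there is a decomposition, via Bony paraproduct or a direct kernel argument using $(-\Delta)^{\alpha/2}(fg)(x) = \text{p.v.}\!\int \frac{f(x)g(x) - f(y)g(y)}{|x-y|^{N+\alpha}}dy$ (up to a normalizing constant), which after adding and subtracting $f(x)g(y)$ splits $A(fg) - fA(g) - gA(f)$ into terms each controlled by $\|\nabla f\|_{L^\infty}$ or $\| f \|_{C^\gamma}$ paired against a fractional derivative of $g$ of order $< \alpha$, and symmetrically. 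Since $\alpha < 2$, each such factor costs strictly fewer than $\max\{\alpha,2\} = 2$ extra derivatives, and the resulting bound $\|A(fg) - fA(g) - gA(f)\|_{H^k} \le C\|f\|_{H^{k+1}}\|g\|_{H^{k+1}} \le C\|f\|_{H^{k+\max\{\alpha,2\}-1}}\|g\|_{H^{k+\max\{\alpha,2\}-1}}$ follows after summing over the at most $k$ derivatives hitting the product and using $H^{k}\hookrightarrow L^\infty$ for $N\le 3$, $k\ge 3$. The one subtlety to handle carefully is that the commutator must lose only $\max\{\alpha,2\}-1$ derivatives \emph{uniformly} in $\alpha\in(0,2)$, which is why the crude bound "$1$ extra derivative on each of $f$ and $g$" suffices and there is slack in the estimate; I would present this as the cleanest route rather than chasing the sharp Kato--Ponce exponents.
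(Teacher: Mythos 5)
Your proposal is correct in substance, but it takes a different route from the paper: the paper's proof is essentially a one-liner that declares (3) and (4) trivially satisfied, handles (5) for $A(u)=D^l u$ by the classical Leibniz rule, and for the fractional Laplacian simply cites Corollary 2.5 of \cite{Holden:karper} (see also \cite{Constantin1}) for the commutator estimate, whereas you sketch a self-contained proof of that estimate. Your decomposition --- writing $(-\Delta)^{\alpha/2}$ as the principal-value integral, adding and subtracting $f(x)g(y)$ so that $A(fg)-fA(g)-gA(f)$ becomes the bilinear form $\pvint \bigl(f(x)-f(y)\bigr)\bigl(g(x)-g(y)\bigr)|x-y|^{-N-\alpha}\,dy$, and then exploiting the slack coming from $\alpha<2$ to settle for the crude bound $C\|f\|_{H^{k+1}}\|g\|_{H^{k+1}}$ --- is exactly the mechanism behind the cited corollary, so you are re-deriving the external ingredient rather than invoking it; this costs length but makes the lemma self-contained, and your explicit remark that (4) holds only with the diffusive sign convention $A(u)=-(-\Delta)^{\alpha/2}u$ (consistent with how the term appears in \eqref{eq:qg}--\eqref{eq:ag}) is more careful than the paper's ``trivially satisfied.'' Two minor caveats, both shared with or harmless relative to the paper's own statement: for $A=D^l$ with a genuinely multi-dimensional multi-index (e.g.\ a mixed derivative, or $+\partial_x^4$), the sign in (4) is not definite, so your hedge ``depending on parity and sign convention'' is the honest reading and matches the 1D Burgers/KdV/Kawahara applications actually used; and for (3) the $L^2$-based statement obtained by Plancherel is all the paper ever uses, so the Mikhlin--H\"ormander digression, while valid (the symbol $|\xi|^\alpha(1+|\xi|^2)^{-\alpha/2}$ satisfies the Mikhlin bounds despite non-smoothness at the origin), is dispensable.
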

\begin{proof}
Conditions (3) and (4) are trivially satisfied. It remains to prove
that $A$ satisfies the commutator estimate (5). In the case of the fractional 
Laplacian ($A(u) = (-\Delta)^{\alpha/2}u$), the commutator 
estimate was proved in Corollary 2.5 in \cite{Holden:karper}. See also 
the lecture notes by Constantin \cite{Constantin1}.
For the standard differential operator, the result is immediate 
from the standard Leibniz rule.
\end{proof}

\begin{remark}
By combining the usual Leibniz rule with the corresponding rule for 
the fractional Laplacian, the previous lemma can be extended to also include operators 
on the form $A(u) = (-\Delta)^{\alpha/2}D^l u$, that is, any 
mix of fractional and standard derivatives. 
\end{remark}

\subsection{Burgers type equations}
If we restrict to one spatial dimension $(N=1)$, the only valid 
velocity operator $\vc{v}$ satisfying requirement (1) of Definition \ref{def:operators}
is
\begin{equation*}
	\vc{v}(u)= au, \qquad a \in \R.
\end{equation*}
Thus, the type of equations we can consider consists of a Burgers 
term and a linear differential term. In the literature one 
can find several equations of this type that are  well-posed
in the sense of (6) in Definition \ref{def:operators}. 
Some examples are:
\begin{align*}
  u_t + (u^2)_x &= u_{xxx}, & (\text{{\sc{KdV}}}), \\
	u_t + (u^2)_x &= -(-\partial_x^2)^{{\alpha/2}}u,& (\text{viscous Burgers}), \\
	u_t + (u^2)_x &= -u_{xxx} + u_{xxxxx}, & (\text{Kawahara}).
\end{align*}
For the viscous Burgers equation, $\alpha \geq 1$ is required to ensure 
well-posedness \cite{Kiselev3}.

\subsection{Quasi-geostrophic flow}
The following 
equation has been proposed  as a toy model for strongly
rotating atmospheric flow
\begin{equation}\label{eqL:qg}
	u_t +  \Div (u \,\vc{v}(u)) = 0, \quad \vc{v}(u) = \Curl (-\Delta)^{-1/2}u,
\end{equation}
where $u$ is the potential temperature, $\vc{v}(u)$ is the fluid velocity, and 
the equation is valid in two dimensions $(N=2)$.
The reader can consult \cite{Constantin1,Constantin4} and the references therein for more 
on the physical aspects of the model. The equation \eqref{eqL:qg}
is in the literature referred to as the \emph{quasi-geostrophic equation} 
and has in the recent years been the subject of numerous analytical studies. 
This recent interest was probably sparked by the  Constantin, Majda, and Tabak, paper 
\cite{Constantin4} in which they give numerical evidence for a 
 connection between the blow-up of solutions to \eqref{eqL:qg} and the 
three dimensional Euler equations.  Though the precise type of 
blow-up has been later dismissed by D. Cordoba \cite{Cordoba2},
there remains hope that understanding the behavior of solutions to \eqref{eqL:qg} can 
aid in characterizing blow-up of solutions to the incompressible Euler equations, a
long standing open problem. As a consequence, most of the recent studies 
concerns regularity of solutions to equations of 
the form \eqref{eqL:qg}. A particularly well-studied 
case is the dissipative quasi-geostrophic equation
\begin{equation}\label{eqL:dqg}
	u_t +   \Div (u \,\vc{v}(u)) = A(u),\quad \vc{v}(u) = \Curl (-\Delta)^{-\beta/2}u, 
\end{equation}
where $A(u) = -(-\Delta)^{\alpha/2}u$. Through the 
papers \cite{Caffarelli1, Constantin2, Cordoba, Kiselev2}, it is has been 
shown that \eqref{eqL:dqg} is well-posed in 
the sense of (6) in Definition \ref{def:operators} for 
$\alpha$, $\beta$ $\in [1,2]$. Since in addition 
$\Div \vc{v}(u)=0$,  (2) in Definition \ref{def:operators}
is also satisfied. Hence, \eqref{eqL:dqg} 
with $\alpha$, $\beta \in [1,2]$ is admissible in our framework. 

\subsection{Aggregation equations}
The following active scalar equation
has been proposed \cite{Mogilner1, Bertozzi1, Bertozzi2} as a model for the
long-range attraction between individuals in flocks, schools, or swarms:
\begin{equation}\label{eq:ag2}
	u_t + \Div \left(u\, \vc{v}(u)\right)  = 0.
\end{equation}
The velocity is determined as the convolution 
of $u$ with an interaction potential:
$$
\vc{v}(u) =  \nabla \Phi \star u.
$$
In these models, $u$ is the density of individuals and 
$\vc{v}(u)$ incorporates the pairwise attractive 
forces between individuals in the flock. Two relevant 
examples of potentials $\Phi$ in applications are 
the radially symmetric 
$\Phi = 1-e^{-|x|}$ and $\Phi = 1-e^{-|x|^2}$.

In the papers \cite{Bertozzi-Carrillo, Bertozzi-Laurent1}, sharp  
conditions are derived on the potential $\Phi$ under which 
solutions of \eqref{eq:ag2} blows up in finite time.
In particular, whenever
\begin{equation*}\label{eq:cond}
	\int_0^1 \frac{1}{\Phi'(r)}~dr < \infty,
\end{equation*}
solutions of \eqref{eq:ag2} collapse to a point  
at the center of mass in finite time. 
Thus, the relevant case $\Phi = 1-e^{-|x|}$ leads to blow up 
in finite time. This clearly non-realistic behavior 
can be attributed to the lack of any effect incorporating 
collision avoidance in the model. Since collision avoidance 
is a short-range phenomena, a simple 
 way to incorporate it is to add 
diffusion to the model
\begin{equation}\label{eq:dag}
	u_t + \Div \left(u\, \vc{v}(u)\right)  = A(u), \qquad A(u) 
	= -(-\Delta)^{\alpha/2}u.
\end{equation}
The equation \eqref{eq:dag} has been the subject of 
the recent studies \cite{DongAG1, DongAG2, DongAG3} 
and well-posedness in the sense of (6) in Definition \ref{def:operators} has been 
established for $\alpha \in (1,2]$.
In view of this well-posedness result and Lemma \ref{lem:examples},
\eqref{eq:dag} is included in our framework provided 
we can verify condition (2) in Definition \ref{def:operators}.

By applying the H\"older inequality, we see that
\begin{equation*}
	\begin{split}
		\int_{\R^N}\Div \vc{v}(f)\phi~dx
		= \int_{\R^N}\int_{\R^N}\Delta \Phi(y)f(x-y)\phi(x)~dxdy 
		\leq \|f\|_{L^p}\|\phi\|_{L^{p'}}\|\Delta \Phi\|_{L^1}.
	\end{split}
\end{equation*}
Hence, $\Div \vc{v}(f)$ satisfies (2) in Definition \ref{def:operators}
for any potential $\Phi$ satisfying $\Delta \Phi \in L^1$.
Comparing this condition with 
the results in the paper \cite{Laurent}, we see that 
we can include all the potentials for
which well-posedness is known.
For instance, the case $\Phi = 1-e^{-|x|}$, since
\begin{equation*}
	\begin{split}
		\Delta \Phi 
		= \Div \left(\frac{x}{|x|}e^{-|x|}\right) 
		= -e^{-|x|} + \frac{N-1}{|x|}e^{-|x|}, \quad x \neq 0,
	\end{split}
\end{equation*}
and
\begin{equation*}
	\int_{\{|x| \leq 1\}} \frac{N-1}{|x|}~dx \leq C, \qquad N \geq 2.
\end{equation*}

\subsection{Magneto geostrophic dynamics}
In the recent paper \cite{Friedlander}, well-posedness was established for 
the following class of active scalar equations 
\begin{equation}\label{eq:mag}
	u_t + \Div(u\,\vc{v}(u))  = A(u), \quad \vc{v}(u) = \Div \mathbb{T} (u),
\end{equation}
where $\mathbb{T}$ is a matrix of Calderon--Zygmund operators 
satisfying 
$$
\Div \vc{v}(u) = \Div \Div \mathbb{T}(u) = 0,
$$and $A$ is the Laplace operator $A(u) = \Delta u$.
The equation \eqref{eq:mag} can be seen as a generalization of 
the quasi-geostrophic equation \eqref{eqL:dqg} (with $\alpha = 2$) since the latter can be 
obtained from \eqref{eq:mag} with a particular choice 
of $\mathbb{T}$ in 2D. The equation \eqref{eq:mag} is also included 
in the framework considered by Constantin in \cite{Constantin3}.

The physical motivation for the study \cite{Friedlander} was that 
\eqref{eq:mag} with a particular choice of $\mathbb{T}$ has 
been proposed as a model for magnetostrophic 
turbulence in the Earth's fluid core. See the 
paper \cite{Friedlander} and the references therein 
for more on this application. Since  \eqref{eq:mag} is well-posed 
in the sense of (6) in Definition \ref{def:operators},  our framework 
does indeed include this class of active scalar equations.

\appendix

\section{Proof of Lemmas \ref{lem:nonlineardiv} 
and \ref{lem:lineardiv}}\label{sec:appendix}
In this appendix we have gathered the proofs 
of Lemmas \ref{lem:nonlineardiv} and \ref{lem:lineardiv}. 
Both  lemmas have been used in an essential 
fashion throughout the convergence analysis. 

\renewcommand{\thesection}{2}

\begin{lemma}\label{lem:nonlineardivA}
Let $k\geq 6$ and  $\vc{v}$ be an operator that 
satisfies Definition \ref{def:operators}. Then,
\begin{equation*}
	\sum_{s=0}^k\left|\int_{\R^N} \Grad^s(\Div (f \vc{v}(f))):
	\Grad^s f~  dx\right| \leq C\|f\|_{H^{k-2}}\|f\|_{H^k}^2,
	\quad  f \in H^k.
\end{equation*}	
\end{lemma}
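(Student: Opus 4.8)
The plan is to expand everything with the Leibniz rule, integrate by parts to isolate the single genuinely top-order contribution, and bound all remaining terms by H\"older's inequality together with Sobolev embedding in dimension $N\le 3$. The operator $\vc{v}$ will enter only through $\|\vc{v}(g)\|_{H^t}\le C\|g\|_{H^t}$ and, when $N\ge2$, $\|\Div\vc{v}(g)\|_{H^t}\le C\|g\|_{H^t}$ — both consequences of (2) in Definition \ref{def:operators}, as already used in the proof of Lemma \ref{lem:local} (for $N=1$ one has $\vc{v}(g)=ag$ and everything below is explicit).

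First I would write $\Div(f\vc{v}(f))=\vc{v}(f)\cdot\Grad f+f\,\Div\vc{v}(f)$, apply $D^l$ with $\abs{l}=s$, pair with $D^l f$, integrate, and sum over $s=0,\dots,k$. By the Leibniz rule $D^l(\vc{v}(f)\cdot\Grad f)=\sum_{\nu\le l}\binom{l}{\nu}\vc{v}(D^\nu f)\cdot\Grad D^{l-\nu}f$, and analogously for $f\,\Div\vc{v}(f)$. The key observation is that the only term in this expansion carrying more than $k$ derivatives on a single factor is the $\nu=0$ contribution $\sum_i v_i(f)\,\partial_i D^l f$ of the first product (for $N=1$ there is an identical one from the second). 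Summing over the divergence index and integrating by parts,
$$\sum_i\int_{\R^N}v_i(f)\,\partial_i D^l f\;D^l f\,\dx=-\tfrac12\int_{\R^N}\big(\Div\vc{v}(f)\big)\,\abs{D^l f}^2\,\dx,$$
which is bounded by $\tfrac12\|\Div\vc{v}(f)\|_{L^\infty}\|D^l f\|_{L^2}^2\le C\|f\|_{H^2}\|f\|_{H^k}^2\le C\|f\|_{H^{k-2}}\|f\|_{H^k}^2$, using $H^2\hookrightarrow L^\infty$ for $N\le3$ and $k\ge6$. This is the crucial step: a naive bound would only give $\|f\|_{H^k}^3$; one cannot integrate by parts through the nonlocal $\vc{v}$; and the full gradient $\Grad\vc{v}(f)$ is \emph{not} controlled by $f$ in general (e.g.\ quasi-geostrophic with $\beta=1$), so it is essential that after recombining the sum over $i$ only $\Div\vc{v}(f)$ survives — which (2) of Definition \ref{def:operators} does control.

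It then remains to estimate all other terms, and in each of them every one of the three factors of $f$ carries at most $k$ derivatives (at least one derivative sits inside $\vc{v}$, or on the bare $f$ of $f\,\Div\vc{v}(f)$). For such a trilinear integral I would place $D^l f$ in $L^2$ (giving $\le C\|f\|_{H^k}$), the lower-order of the two remaining factors in $L^\infty$ (controlled by $\|f\|_{H^2}\le\|f\|_{H^{k-2}}$ via $H^2\hookrightarrow L^\infty$), and the last factor in $L^2$ (giving $\le C\|f\|_{H^k}$); here a factor like $\vc{v}(D^\nu f)$ or $\Div\vc{v}(D^{l-\nu}f)$ is converted to the corresponding norm of $f$ by the $\vc{v}$-bounds above. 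The only configurations not immediately covered this way are the finitely many borderline ones, which occur only for $k\in\{6,7\}$, when the lowest order of the two factors is $3$ or $4$; there I would instead split those two factors as $L^4\times L^4$ using $H^{3/4}\hookrightarrow L^4$ (valid for $N\le3$), which again yields $C\|f\|_{H^{k-2}}\|f\|_{H^k}$ after using $\|f\|_{H^{k-2}}\le\|f\|_{H^k}$ to absorb an excess low-order factor. Summing the finitely many resulting terms over $s=0,\dots,k$ gives the claim.

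The main obstacle is exactly the top-order term treated in the second step: it is where the gain from $H^k$ to $H^{k-2}$ on one factor comes from, and it dictates that only $\Div\vc{v}$ — and nothing more regular — may be used. The rest is bookkeeping with H\"older and Sobolev embeddings which is valid but slightly fussy at the minimal admissible value $k=6$.
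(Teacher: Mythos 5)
Your proposal is correct and takes essentially the same route as the paper's appendix proof: the same splitting $\Div(f\vc{v}(f))=\vc{v}(f)\cdot\Grad f+f\,\Div\vc{v}(f)$, Leibniz expansion, a single integration by parts on the top-order term so that only $\Div\vc{v}(f)$ survives (controlled via (2) of Definition \ref{def:operators}), and H\"older plus Sobolev embedding for all remaining trilinear terms. The only difference is bookkeeping: the paper's four-way split of the Leibniz indices stays within $L^2$/$L^\infty$ by choosing case by case which factor carries the $H^{k-2}$ norm, whereas you fix the roles of the factors and patch the $k\in\{6,7\}$ borderline terms with $H^{3/4}\hookrightarrow L^4$, which is equally valid.
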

\begin{proof}
Let us confine to the three dimensional case ($N=3$) 
as the other cases are almost identical. 
The product rule provides  us with the identity
\begin{equation*}
	\begin{split}
	&\sum_{s= 0}^k\left|\int_{\R^N}\Grad^s 
	\Div \left(f \vc{v}(f)\right):\Grad^s f~  dx\right| \\
	&\qquad \leq \sum_{s=0}^k\Big(\left|\int_{\R^N}
	\Grad^s\left(\Grad f \cdot \vc{v}(f) \right): \Grad^sf~dx\right| 
	+ \left|\int_{\R^N}\Grad^s (f \Div \vc{v}(f)): \Grad^sf~dx \right| \Big)\\
	&\qquad := \sum_{s=0}^k\left|I_1^s\right| + \left|I_2^s\right|.		
	\end{split}
\end{equation*}
In the remaining parts of the proof, our strategy is to bound 
each of $I_1^s$ and $I_2^s$, $s=0, \ldots, k$, separately.
We begin with $I_1^s$.

1. By applying the  Leibniz 
rule to $I_1^s$ (with multi-index notation $\alpha = (\alpha_1, \alpha_2, \alpha_3)$), we obtain the following expression 
\begin{align}\label{eq:I1start}
	I_1^s &= \sum_{|\alpha| = s}\int_{\R^N} 
	\Grad^\alpha(\Grad f \cdot \vc{v}(f))\Grad^\alpha f~dx \nonumber\\
	&=  \sum_{|\alpha| = s}\sum_{i_1=0}^{\alpha_1}\sum_{i_2=0}^{\alpha_2}
	\sum_{i_3=0}^{\alpha_3}
	\left(\alpha_1 \atop i_1\right)\left(\alpha_2 \atop i_2\right)\left(\alpha_3 \atop i_3\right) \\
	&\quad \times \int_{\R^N} \left(
	\Grad \frac{\partial^{i_1 + i_2 + i_3}f}{\partial x^{i_1}\partial y^{i_2}\partial z^{i_3}} 
	\cdot \vc{v}\left(\frac{\partial^{s-i_1 - i_2 - i_3}f}{\partial x^{\alpha_1 - i_1}\partial y^{\alpha_2 - i_2}
	\partial z^{\alpha_3 - i_3}}\right) \right)
	\frac{\partial^s f}{\partial x^{\alpha_1}\partial y^{\alpha_2}
	\partial z^{\alpha_3}}~dx. \nonumber
\end{align}
Let us now consider four separate cases of $i_1+i_2+i_3$ in the above quadruple sum.

(i) If $i_1+i_2+ i_3= k=s$, i.e.,  $(\alpha_1, \alpha_2, \alpha_3)=(i_1,i_2,i_3)$,  the 
above term can be rewritten as follows
\begin{equation*}
	\begin{split}
		&\int_{\R^N} \left(\Grad \frac{\partial^k f}{\partial x^{\alpha_1}
		\partial y^{\alpha_2}
		\partial z^{\alpha_3}} \cdot \vc{v}(f)\right)
		\frac{\partial^k f}{\partial x^{\alpha_1}
		\partial y^{\alpha_2}\partial z^{\alpha_3}}~dx \\
		&=
		\int_{\R^N} \frac{1}{2}\Grad \left|\frac{\partial^k f}{\partial x^{\alpha_1}
		\partial y^{\alpha_2}\partial z^{\alpha_3}}\right|^2
		\cdot \vc{v}(f)~  dx 
		  = -\int_{\R^N} \frac{1}{2} \left|\frac{\partial^k f}{\partial x^{\alpha_1}
		  \partial y^{\alpha_2}\partial z^{\alpha_3}}\right|^2
		\Div \vc{v}(f)~  dx \\
		& \leq \frac{1}{2}\left\|\frac{\partial^k f}{\partial x^{\alpha_1}
		\partial y^{\alpha_2}\partial z^{\alpha_3}}\right\|^2_{L^2}
		\|\Div \vc{v}(f)\|_{L^\infty}
		\leq C\|f\|_{H^k}^2\|\vc{v}(f)\|_{H^3} 
		\leq  C\|f\|^2_{H^{k}}\|f\|_{H^{k-2}},
	\end{split}
\end{equation*}
where we have  used the Sobolev embedding $H^2 \subset L^\infty$ and $k \geq 6$.

(ii) If $2 \leq i_1 + i_2 + i_3  \leq k-3$, we find
\begin{equation*}
	\begin{split}
		&\int_{\R^N} \left(\Grad \frac{\partial^{i_1 + i_2 + i_3}f}{\partial x^{i_1}
		\partial y^{i_2}\partial z^{i_3}} 
		\cdot \vc{v}\left(\frac{\partial^{s-i_1 - i_2 - i_3}f}{\partial x^{\alpha_1 - i_1}
		\partial y^{\alpha_2 - i_2}\partial z^{\alpha_3 - i_3}}\right) \right)
		\frac{\partial^s f}{\partial x^{\alpha_1}\partial y^{\alpha_2}\partial z^{\alpha_3}}~dx \\
		&\leq \left\|\Grad \frac{\partial^{i_1 + i_2 + i_3}f}{\partial x^{i_1}
		\partial y^{i_2}\partial z^{i_3}} \right\|_{L^\infty}
		\left\|\vc{v}\left(\frac{\partial^{s-i_1 - i_2 - i_3}f}{\partial x^{\alpha_1 - i_1}
		\partial y^{\alpha_2 - i_2}\partial z^{\alpha_3 - i_3}}\right)\right\|_{L^2}
		\left\|\frac{\partial^s f}{\partial x^{\alpha_1}
		\partial y^{\alpha_2}\partial z^{\alpha_3}}\right\|_{L^2}
		\\ & \leq C\|f\|_{H^{k}}\|f\|_{H^{k-2}}\|f\|_{H^{k}}.
	\end{split}
\end{equation*}

(iii) If $k-2 \leq i_1 + i_2 + i_3 \leq k-1$, we find
\begin{equation*}
	\begin{split}
		&\int_{\R^N} \left(\Grad \frac{\partial^{i_1 
		+ i_2 + i_3}f}{\partial x^{i_1}\partial y^{i_2}\partial z^{i_3}} 
		\cdot \vc{v}\left(\frac{\partial^{s-i_1 - i_2 - i_3}f}{\partial x^{\alpha_1 - i_1}
		\partial y^{\alpha_2 - i_2}\partial z^{\alpha_3 - i_3}}\right) \right)
		\frac{\partial^s f}{\partial x^{\alpha_1}\partial y^{\alpha_2}
		\partial z^{\alpha_3}}~dx \\
		&\leq \left\|\Grad \frac{\partial^{i_1 + i_2 + i_3}f}{\partial x^{i_1}
		\partial y^{i_2}\partial z^{i_3}}\right\|_{L^2}
		\left\|\vc{v}\left(\frac{\partial^{s-i_1 - i_2 - i_3}f}{\partial x^{\alpha_1 - i_1}
		\partial y^{\alpha_2 - i_2}\partial z^{\alpha_3 - i_3}}\right)\right\|_{L^\infty}
		\left\|\frac{\partial^s f}{\partial x^{\alpha_1}\partial y^{\alpha_2}
		\partial z^{\alpha_3}}\right\|_{L^2} \\
		&\leq C\|f\|_{H^{k}}\|f\|_{H^4}\|f\|_{H^{k}} 
		\leq C\|f\|_{H^{k-2}}\|f\|_{H^{k}}^2.
	\end{split}
\end{equation*}
To conclude the second last inequality, we have used that $k-2 \leq s \leq k$ and hence that 
$s-i_1-i_2-i_3 \leq 2$. The inequality then follows from the embedding $H^2 \subset L^\infty$.

(iv) If $0 \leq i_1 + i_2 + i_3 \leq 1$, we find
\begin{equation*}
	\begin{split}
		&\int_{\R^N} \left(\Grad \frac{\partial^{i_1 + i_2 + i_3}f}{\partial x^{i_1}
		\partial y^{i_2}\partial z^{i_3}} 
		\cdot \vc{v}\left(\frac{\partial^{s-i_1 - i_2 - i_3}f}{\partial x^{\alpha_1 - i_1}
		\partial y^{\alpha_2 - i_2}\partial z^{\alpha_3 - i_3}}\right) \right)
		\frac{\partial^s f}{\partial x^{\alpha_1}\partial y^{\alpha_2}
		\partial z^{\alpha_3}}~dx \\
		&\leq \left\|\Grad \frac{\partial^{i_1 + i_2 + i_3}f}{\partial x^{i_1}\partial y^{i_2}
		\partial z^{i_3}}\right\|_{L^\infty}
		\left\|\vc{v}\left(\frac{\partial^{s-i_1 - i_2 - i_3}f}{\partial x^{\alpha_1 - i_1}
		\partial y^{\alpha_2 - i_2}\partial z^{\alpha_3 - i_3}}\right)\right\|_{L^2}
		\left\|\frac{\partial^s f}{\partial x^{\alpha_1}\partial y^{\alpha_2}
		\partial z^{\alpha_3}}\right\|_{L^2} \\ & 
		\leq C\|f\|_{H^4}\|f\|_{H^{k}}^2 \leq C\|f\|_{H^{k-2}}\|f\|_{H^{k}}^2.
	\end{split}
\end{equation*}

Hence, by applying (i)--(iv) in \eqref{eq:I1start}, we see that
\begin{equation}\label{eq:I1bound}
	\sum_{s=0}^{k} |I_1^s|  \leq C\|f\|_{H^{k-2}}\|f\|_{H^{k}}^2.
\end{equation}

2. We now bound the $I_2^s$ terms. The standard Leibniz rule 
provides the identity
\begin{align}\label{eq:I2start}
	I_2^s &= \sum_{|\alpha| = s}\int_{\R^N} \Grad^\alpha
	\left(f \Div \vc{v}(f)\right)\Grad^\alpha f~dx \nonumber\\
	&= \sum_{|\alpha| = s}\sum_{i_1=0}^{\alpha_1}
	\sum_{i_2=0}^{\alpha_2}\sum_{i_3=0}^{\alpha_3}
	\left(\alpha_1 \atop i_1\right)\left(\alpha_2 \atop i_2\right)
	\left(\alpha_3 \atop i_3\right) \\
	&\quad \times \int_{\R^N} \left( \frac{\partial^{i_1 + i_2 + i_3}f}{\partial x^{i_1}
	\partial y^{i_2}\partial z^{i_3}} 
	\Div\vc{v}\left(\frac{\partial^{s-i_1 - i_2 - i_3}f}{\partial x^{\alpha_1 - i_1}
	\partial y^{\alpha_2 - i_2}\partial z^{\alpha_3 - i_3}}\right)\right)
	\frac{\partial^s f}{\partial x^{\alpha_1}\partial y^{\alpha_2}
	\partial z^{\alpha_3}}~dx. \nonumber
\end{align}
Let us again consider four separate cases of $i_1 + i_2 + i_3$.

(i) If $i_1+i_2+i_3 = 0$, we apply the H\"older inequality 
and (3) in Definition \ref{def:operators}, which yields to deduce
\begin{equation*}
	\begin{split}
		&\int_{\R^N} \left(f
		\Div\vc{v}\left(\frac{\partial^{s}f}{\partial x^{\alpha_1}
		\partial y^{\alpha_2}\partial z^{\alpha_3}}\right)\right)
		\frac{\partial^s f}{\partial x^{\alpha_1}\partial y^{\alpha_2}\partial z^{\alpha_3}}~dx \\
		&\leq \|f\|_{L^\infty}\left\|\Div\vc{v}\left(\frac{\partial^{s}f}{\partial x^{\alpha_1}
		\partial y^{\alpha_2}\partial z^{\alpha_3}}\right)\right\|_{L^2} \|f\|_{H^s}
		\leq C\|f\|_{H^{k-2}}\|f\|_{H^k}^2.
	\end{split}
\end{equation*}

(ii) If $3 \leq i_1 + i_2 + i_3 \leq k-2$, we find
\begin{equation*}
	\begin{split}
		&\int_{\R^N} \left(\frac{\partial^{i_1 + i_2 + i_3}f}{\partial x^{i_1}\partial y^{i_2}\partial z^{i_3}} 
		\Div\vc{v}\left(\frac{\partial^{s-i_1 - i_2 - i_3}f}{\partial x^{\alpha_1 - i_1}
		\partial y^{\alpha_2 - i_2}\partial z^{\alpha_3 - i_3}}\right)\right)
		\frac{\partial^s f}{\partial x^{\alpha_1}\partial y^{\alpha_2}\partial z^{\alpha_3}}~dx \\
		&\leq 
		\left\|\frac{\partial^{i_1 + i_2 + i_3}f}{\partial x^{i_1}\partial y^{i_2}\partial z^{i_3}}
		\right\|_{L^2} \left\|\Div\vc{v}
		\left(\frac{\partial^{s-i_1 - i_2 - i_3}f}{\partial x^{\alpha_1 - i_1}
		\partial y^{\alpha_2 - i_2}\partial z^{\alpha_3 - i_3}}\right)\right\|_{L^\infty}
		\left\|\frac{\partial^s f}{\partial x^{\alpha_1}\partial y^{\alpha_2}
		\partial z^{\alpha_3}}\right\|_{L^2} \\
		&\leq C \|f\|_{H^{k-2}}\|\vc{v}(f)\|_{H^{s}}\|f\|_{H^s} 
		\leq C\|f\|_{H^{k-2}}\|f\|_{H^{k}}^2.
	\end{split}
\end{equation*}

(iii) If $1 \leq i_1 + i_2 + i_3 \leq 2$, we find
\begin{equation*}
	\begin{split}
		&\int_{\R^N} \left( \frac{\partial^{i_1 + i_2 + i_3}f}{\partial x^{i_1}
		\partial y^{i_2}\partial z^{i_3}} 
		\Div\vc{v}\left(\frac{\partial^{s-i_1 - i_2 - i_3}f}{\partial x^{\alpha_1 - i_1}
		\partial y^{\alpha_2 - i_2}\partial z^{\alpha_3 - i_3}}\right)\right)
		\frac{\partial^s f}{\partial x^{\alpha_1}\partial y^{\alpha_2}
		\partial z^{\alpha_3}}~dx \\
		&\leq 
		\left\|\frac{\partial^{i_1 + i_2 + i_3}f}{\partial x^{i_1}\partial y^{i_2}
		\partial z^{i_3}}\right\|_{L^\infty}
		\left\|\Div\vc{v}\left(\frac{\partial^{s-i_1 - i_2 - i_3}f}{\partial x^{\alpha_1 - i_1}
		\partial y^{\alpha_2 - i_2}\partial z^{\alpha_3 - i_3}}\right)\right\|_{L^2}
		\left\|\frac{\partial^s f}{\partial x^{\alpha_1}
		\partial y^{\alpha_2}\partial z^{\alpha_3}}\right\|_{L^2} \\
		& \leq C\|f\|_{H^4}\|\vc{v}(f)\|_{H^s}\|f\|_{H^s} \leq C\|f\|_{H^{k-2}}\|f\|_{H^{k}}^2.
	\end{split}
\end{equation*}

(iv) If $k-1 \leq i_1+ i_2 + i_3 \leq k$, we find
\begin{equation*}
	\begin{split}
		&\int_{\R^N} \left( \frac{\partial^{i_1 + i_2 + i_3}f}{\partial x^{i_1}
		\partial y^{i_2}\partial z^{i_3}} 
		\Div\vc{v}\left(\frac{\partial^{s-i_1 - i_2 - i_3}f}{\partial x^{\alpha_1 - i_1}
		\partial y^{\alpha_2 - i_2}\partial z^{\alpha_3 - i_3}}\right)\right)
		\frac{\partial^s f}{\partial x^{\alpha_1}\partial y^{\alpha_2}
		\partial z^{\alpha_3}}~dx \\
		& \leq 
		\left\|\frac{\partial^{i_1 + i_2 + i_3}f}{\partial x^{i_1}
		\partial y^{i_2}\partial z^{i_3}}\right\|_{L^2}
		\left\|\Div\vc{v}\left(\frac{\partial^{s-i_1 - i_2 - i_3}f}{\partial x^{\alpha_1 - i_1}
		\partial y^{\alpha_2 - i_2}\partial z^{\alpha_3 - i_3}}\right)\right\|_{L^\infty}
		\left\|\frac{\partial^s f}{\partial x^{\alpha_1}
		\partial y^{\alpha_2}\partial z^{\alpha_3}}\right\|_{L^2} \\
		&\leq C\|f\|_{H^{k}} \|\vc{v}(f)\|_{H^4}\|f\|_{H^s} 
		\leq C\|f\|_{H^{k-2}}\|f\|_{H^{k}}^2.
	\end{split}
\end{equation*}

Applying (i)--(iv) in \eqref{eq:I2start} gives
\begin{equation*}
	\sum_{s=0}^{k}|I_2^s| \leq C\|f\|_{H^{k-2}}\|f\|_{H^{k}}^2.
\end{equation*}
Together with \eqref{eq:I1bound}, this 
brings our proof to an end.
\end{proof}

\begin{lemma}\label{lem:lineardivA}
Let $k\geq 4$. Then the following estimates hold
\begin{equation}\label{eq:ldiv12}
	\sum_{s=0}^k\left|\int_{\R^N} \Grad^s\Div \left(f \vc{v}(g)\right):
	\Grad^s f~dx\right| \leq C\|g\|_{H^k}\|f\|_{H^k}^2,
\end{equation}
\begin{equation}\label{eq:ldiv22}
	\sum_{s=0}^k\left|\int_{\R^N}\Grad^s\Div \left(g\vc{v}(f)\right):
	\Grad^s f~dx\right| \leq C\|g\|_{H^{k+1}}\|f\|_{H^k}^2.
\end{equation}
\end{lemma}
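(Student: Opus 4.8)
The plan is to mirror the proof of Lemma~\ref{lem:nonlineardivA}. First I would use the product rule to write $\Div(f\vc{v}(g))=\Grad f\cdot\vc{v}(g)+f\,\Div\vc{v}(g)$ (and, for \eqref{eq:ldiv22}, $\Div(g\vc{v}(f))=\Grad g\cdot\vc{v}(f)+g\,\Div\vc{v}(f)$), then apply $\Grad^s$ and expand by the Leibniz rule; in each resulting trilinear integral I would put the factor carrying the fewest derivatives in $L^\infty$ and the remaining two in $L^2$, using the Sobolev embedding $H^2\subset L^\infty$ (valid since $N\le3$), the $L^p$-boundedness of $\vc{v}$ and of $\Div\vc{v}$ from Definition~\ref{def:operators}(2), and --- as in the appendix proof of Lemma~\ref{lem:nonlineardivA} --- the commutation $\Grad^j\vc{v}(h)=\vc{v}(\Grad^j h)$. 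The binomial coefficients generated by the Leibniz rule are harmless since for each $s$ there are only finitely many terms.

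For \eqref{eq:ldiv12} the sum splits as $\sum_s|I_1^s|+\sum_s|I_2^s|$ with $I_1^s$ coming from $\Grad f\cdot\vc{v}(g)$ and $I_2^s$ from $f\,\Div\vc{v}(g)$. The only delicate term is the one in $I_1^s$ in which all $s$ Leibniz derivatives land on $\Grad f$; it equals $\int_{\R^N}\bigl(\Grad\Grad^s f\cdot\vc{v}(g)\bigr)\Grad^s f~dx$, and exactly as in case~(i) of the proof of Lemma~\ref{lem:nonlineardivA} one rewrites it as $\tfrac12\int_{\R^N}\Grad|\Grad^s f|^2\cdot\vc{v}(g)~dx=-\tfrac12\int_{\R^N}|\Grad^s f|^2\,\Div\vc{v}(g)~dx\le C\|f\|_{H^k}^2\|\Div\vc{v}(g)\|_{L^\infty}\le C\|f\|_{H^k}^2\|g\|_{H^2}$. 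Every other term has the shape $\int_{\R^N}(\Grad^{m}f)\,\cdot\,(\text{a derivative of }\vc{v}(g))\,\Grad^s f~dx$ with $m\le s\le k$ and with the two factors carrying $m$ and $s-m$ (resp.\ $s-m+1$) derivatives summing to at most $k$; splitting the index sum according as the $f$-factor carries at most $k-2$ or at least $k-1$ derivatives --- in the first case placing $\Grad^m f$ in $L^\infty$ via $\|\Grad^m f\|_{L^\infty}\le C\|f\|_{H^{m+2}}\le C\|f\|_{H^k}$, in the second case noting that the $\vc v(g)$-factor then has at most two derivatives and so lies in $L^\infty$ with norm $\le C\|g\|_{H^{4}}\le C\|g\|_{H^k}$ --- and using $k\ge4$, each term is bounded by $C\|g\|_{H^k}\|f\|_{H^k}^2$. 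This is the same four-case bookkeeping as cases~(i)--(iv) in the proof of Lemma~\ref{lem:nonlineardivA}.

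For \eqref{eq:ldiv22} I would run the identical argument on $\Grad g\cdot\vc{v}(f)+g\,\Div\vc{v}(f)$. The one structural difference --- and the reason $g$ is required in $H^{k+1}$ rather than $H^k$ --- is that no integration by parts is available for the top-order term: when all $s$ Leibniz derivatives fall on $\Grad g$ one obtains $\int_{\R^N}\bigl(\Grad^{s+1}g\cdot\vc{v}(f)\bigr)\Grad^s f~dx$, which for $s=k$ genuinely carries $\Grad^{k+1}g$, and the only way to estimate it is $\le\|\Grad^{s+1}g\|_{L^2}\|\vc{v}(f)\|_{L^\infty}\|\Grad^s f\|_{L^2}\le C\|g\|_{H^{k+1}}\|f\|_{H^2}\|f\|_{H^k}$, which forces $\|g\|_{H^{k+1}}$ on the right-hand side. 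All remaining terms carry at most $k$ derivatives on each factor and are handled by the same $L^\infty$--$L^2$--$L^2$ splitting (for which $\|g\|_{H^k}$ already suffices), yielding $C\|g\|_{H^{k+1}}\|f\|_{H^k}^2$.

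The main obstacle is purely organisational: confirming that every endpoint index case closes under the hypothesis $k\ge4$, i.e.\ that whenever a factor must be moved into $L^\infty$ it carries at most two derivatives (so $H^2\subset L^\infty$ applies) while the two $L^2$ factors never exceed $k$ derivatives (respectively $k+1$ for the $g$-factor in \eqref{eq:ldiv22}). Since this is precisely the case analysis already spelled out in full for Lemma~\ref{lem:nonlineardivA}, I would present it compactly, organised as a split over $i_1+i_2+i_3$ parallel to cases~(i)--(iv) there, and omit the repetitive estimates.
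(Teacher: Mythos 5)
Your proposal is correct and follows essentially the same route as the paper: the paper proves \eqref{eq:ldiv12} by repeating the calculations of Lemma \ref{lem:nonlineardivA} verbatim, and for \eqref{eq:ldiv22} it replaces only step (i) of part 1 by the direct $L^2$--$L^\infty$--$L^2$ bound $\|\Grad^{k+1}g\|_{L^2}\|\vc{v}(f)\|_{L^\infty}\|f\|_{H^k}$, which is exactly the top-order term you single out as the source of the $\|g\|_{H^{k+1}}$ norm. Your write-up is in fact slightly more careful than the paper's (which states the final bound with a typographical slip in the roles of $f$ and $g$), but the decomposition, the integration by parts for \eqref{eq:ldiv12}, and the endpoint bookkeeping under $k\geq 4$ all coincide with the paper's argument.
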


\begin{proof}
The proof of \eqref{eq:ldiv12} is easily obtained by the calculations
of the previous proof. To prove \eqref{eq:ldiv22}, it is only step (i)
in part 1 of the previous proof that does not go through. Instead, we 
now make the calculation
\begin{equation*}
	\begin{split}
		&\int_{\R^N} \left(\Grad \frac{\partial^k g}{\partial x^{\alpha_1}
		\partial y^{\alpha_2}\partial z^{\alpha_3}} \cdot \vc{v}(f)\right)
		\frac{\partial^k f}{\partial x^{\alpha_1}\partial y^{\alpha_2}
		\partial z^{\alpha_3}}~dx \\
		& \leq \left\|\Grad \frac{\partial^k g}{\partial x^{\alpha_1}
		\partial y^{\alpha_2}\partial z^{\alpha_3}}\right\|_{L^2}
		\left\|\vc{v}(f)\right\|_{L^\infty}\|f\|_{H^k}
		\leq C\|f\|_{H^{k+1}}^2\|f\|_{H^2},
	\end{split}
\end{equation*}
and the proof is complete. 
\end{proof}

\end{document}